\documentclass[10pt]{amsart}
\usepackage{2025macros}

\usepackage{todonotes}

\usepackage{parskip}
\usepackage{eucal}
\newcommand{\edit}[1]{\todo[noline,color=violet!35,linecolor=blue!40!black,size=\small]{EDITS HERE}}

\begin{document}

\title{Chern--Simons factorization algebras and knot polynomials}

\author{
  Kevin Costello
  \and
  John Francis
  \and
  Owen Gwilliam
}

\thanks{J.F. was supported by the National Science Foundation under awards 0902974, 1207758, and 1812057. Part of this work was done while J.F. and O.G. were in residence at the Mathematical Sciences Research Institute in Berkeley, California, during the Spring 2020 semester, which was supported by the National Science Foundation under Grant No. DMS-1440140. The National Science Foundation supported O.G. through DMS Grant 2042052 during the last stages of writing of this paper.}

\begin{abstract}
This work identifies the Reshetikhin--Turaev invariant of links in terms of a trace map on factorization homology. 
In particular, to recover the knot invariants associated to Chern--Simons theories,
we construct a filtered $\cE_3$-algebra $\cA^\lambda$ by BV quantization of Chern--Simons theory for a semi-simple Lie algebra ${\frak g}$ with invariant pairing~$\lambda$, 
and we prove that a finite-dimensional representation $V$ of the Drinfeld--Jimbo quantum group $\U_\hbar{\frak g}$ defines a perfect $\cA^\lambda$ module~$V$. 
For any framed link $K$ in $\RR^3$, we then prove that there is an equality
\[
\int_{K\subset\RR^3}{\rm tr}(V) = Z_V(K\subset\RR^3)
\]
between the factorization homology trace for $V$ and the Reshetikhin--Turaev link invariant determined by~$V$.
\end{abstract}

\keywords{Chern--Simons theory, Reshetikhin--Turaev invariants, Link invariants, Jones Polynomial, topological quantum field theory.}

\subjclass[2020]{Primary 57R56. Secondary 57K16, 18M15, 17B37.}

\maketitle

\tableofcontents

\section{Introduction}

In the seminal paper \cite{WitCS}, Witten showed that the Jones polynomial of a knot can be realized in a Chern--Simons quantum field theory 
as the expected value of a Wilson operator on the knot. 
To be more precise, 
Witten argued that if $K \subset S^3$ is a knot, 
then the Jones polynomial is encoded in the following functional integral expression:
$$
J(K, e^{2 \pi i/ k})= \int_{A \in {\rm Conn}(M) / {\rm Gauge} } e^{i k S_{CS}(A)} {\rm Tr}_{V}({\rm Hol}_{K}(A)). 
$$
Here the Jones polynomial $J(K,q)$ depends on a parameter $q$, 
which we set as $q=e^{2 \pi i / k}$, to relate to the level appearing in the Chern--Simons theory. 
This putative integral is over a quotient space, namely the infinite-dimensional space of $SU(2)$-bundles with connection on $S^3$ modulo gauge. 
The integrand involves the Chern--Simons functional $S_{CS}(A)$ of a connection, labeled $A$; 
the integer $k$ is the level; $V$ is the fundamental representation of ${\rm SU}(2)$;
and the final term is the trace over $V$ of the holonomy of $A$ around the knot~$K$.
In physical language, that last term is the Wilson loop operator,
and the overall integral computes the expected value of the Wilson loop operator.

This functional integral expression is heuristic, because the functional measure is not well-defined.  
But instead of trying to define this functional integral, 
Witten derived the Jones polynomial from his analysis of formal properties of Chern--Simons theories.

Around the same time, Reshetikhin and Turaev \cite{RT} gave a general construction of knot invariants starting with a quantum group.    
Reshetikhin and Turaev's invariants are more general than the Jones polynomial, and 
they are defined whenever one has a compact Lie group $G$ with a finite-dimensional representation~$V$.  

Witten's approach also yields knot invariants depending on the same data: 
the group $G$ is the gauge group for Chern--Simons theory, and 
the Wilson operator on the knot uses the trace of the holonomy in the representation~$V$.
(The Jones polynomial is a special case.) 

It has long been believed that Witten's and Reshetikhin--Turaev's knot invariants are the same.  
It has been difficult to verify this belief, however, 
because Witten's constructions rely on non-rigorous arguments about infinite-dimensional functional integrals. 

In this paper we prove, in perturbation theory, a version of this equivalence of invariants. 
The novel mechanism in our work is the theory of factorization homology,
which captures the behavior of observables for perturbative topological field theories.
To motivate the statement of our central result,
we sketch the dictionary that guides our work.

\begin{rmk}
After \cite{WitCS}, a steady stream of work in physics and mathematics has explored Chern--Simons theory,
so that there is now an ocean of literature.
We will not attempt to map it, 
focusing instead on work with which we are familiar and which has connections with this paper.\footnote{Undoubtedly we have overlooked important prior work, despite an effort to engage the literature,
and we welcome feedback on this front.}
For a relatively recent and rather magisterial survey of Chern--Simons theory from the perspective of mathematics,
see~\cite{FreAMS}.
\end{rmk}

\subsection{A guiding dictionary and our central result}

Before delving into intricacies,
we wish to explain the basic dictionary underlying our work.
The coarsest version is
\begin{center}
\begin{minipage}{.3\textwidth}
{\fontfamily{qag}\selectfont
path integral of a TFT}
\end{minipage}
$\longleftrightarrow\quad$ 
\begin{minipage}{.4\textwidth}
{\fontfamily{qag}\selectfont
factorization homology}
\end{minipage}
\end{center}
and hence we should be able to convert constructions with Chern--Simons theory
on 3-manifolds into factorization homology computations of an $\cE_3$-algebra,
the local observables of the Chern--Simons theory.
(Later in this introduction, we refine this correspondence considerably.)
But the Jones polynomial involves an elaboration of that dictionary,
in two steps.

First,
given a connection $\nabla$ on a 3-manifold $M$ and a knot $K \subset M$,
the trace of holonomy of $\nabla$ around $K$ is itself the path integral for a field theory on the knot
(i.e., a mechanical system with periodic time) that depends on~$\nabla$.
On the physics side we have a TFT coupled to a 1-dimensional defect TFT,
and it corresponds to a factorization algebra on a stratified manifold ${K \subset M}$.
Under our dictionary that means
\begin{center}
\begin{minipage}{.35\textwidth}
{\fontfamily{qag}\selectfont
path integral for the TFT\\coupled to 1d TFT }
\end{minipage}
$\longleftrightarrow\quad$ 
\begin{minipage}{.4\textwidth}
{\fontfamily{qag}\selectfont
factorization homology on \\a stratified manifold $K \subset M$}
\end{minipage}
\end{center}
where this factorization homology involves the $\cE_3$-algebra of Chern--Simons theory on $M \setminus K$ and an $\cE_1$-algebra along~$K$.

Second, we can describe the 1-dimensional QFT using quantum mechanics with a finite-dimensional Hilbert space,
so that the partition function has an alternate description as the trace of an operator.
This admits a factorization version as an $\cE_3$-algebra (for Chern--Simons theory) acting on an $\cE_1$-algebra (for the 1d theory) acting on a vector space (the Hilbert space).
A {\em key result} of this paper is that, because the Hilbert space is finite-dimensional, there is a natural isomorphism
\begin{center}
\begin{minipage}{.4\textwidth}
{\fontfamily{qag}\selectfont
factorization homology on \\a stratified manifold $K \subset M$}
\end{minipage}
$\xto{\cong}\quad$ 
\begin{minipage}{.4\textwidth}
{\fontfamily{qag}\selectfont
factorization homology on $M$}
\end{minipage}
\end{center}
essentially because factorization homology is Morita-invariant.

The Jones polynomial is the expected value of a particular operator in this coupled TFT,
so it is computed by a path integral of a 1d TFT coupled to Chern--Simons theory. 
By our dictionary that means it is in the image of our isomorphism:  
\begin{center}
\begin{minipage}{.25\textwidth}
{\fontfamily{qag}\selectfont
Jones polynomial\\ for a knot $K \subset M$}
\end{minipage}
$\longleftrightarrow\quad$ 
\begin{minipage}{.4\textwidth}
{\fontfamily{qag}\selectfont an element $J_K$ in factorization \\homology on $M$}
\end{minipage}
\end{center}
so our goal is to characterize that operator and its image in factorization homology.

To put things on a solid footing, 
we introduce an invariant of framed links $K\subset M$ in terms of factorization homology
\[
\int_{K\subset M} \tr(V)
\]
given in terms of an $\cE_3$-algebra $A$ and a perfect $A$-module~$V$.
We use the perturbative quantization of Chern--Simons theory to produce such a pair $(A,V)$ of inputs.
Our main theorem is then the following.

\begin{thm}
\label{thm: central}
For each invariant pairing on a semisimple Lie algebra $\g$, 
BV quantization of Chern--Simons theory gives a filtered $\cE_3$-algebra $\cA^\lambda$. 
A choice of finite-dimensional representation $V$ of the Drinfeld--Jimbo quantum group $\U_\hbar\g$ defines a perfect $\cA^\lambda$ module.
In terms of this data, 
for any framed link $K\subset \RR^3$, there is an equality
\[
\int_{K\subset\RR^3}\tr(V) = Z_V(K\subset \RR^3)
\]
between the factorization homology trace and the Reshetikhin--Turaev invariant of the link.
\end{thm}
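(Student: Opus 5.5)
Our strategy is to reduce both sides of the equality to invariants that the braided monoidal category of perfect modules over the $\cE_2$-algebra attached to $\cA^\lambda$ determines. By Dunn additivity and the theory of $\cE_n$-modules, perfect $\cA^\lambda$-modules organize into a braided (in fact ribbon, once framings are taken into account) monoidal $\infty$-category $\Perf(\cA^\lambda)$. This category is obtained from the $\cE_2$-algebra $\int_{\RR^1}\cA^\lambda$, in the sense of factorization homology over $\RR^3=\RR^2\times\RR$. For a framed link $K\subset \RR^3$, the trace $\int_{K\subset\RR^3}\tr(V)$ is factorization homology of the stratified manifold $K\subset\RR^3$. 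Along $K$ it uses the $\cE_1$-algebra $\End(V)$, and off $K$ it uses $\cA^\lambda$. Because $V$ is perfect, the Morita-invariance isomorphism from the introduction identifies this with $\int_{\RR^3}\cA^\lambda$. The image of the canonical element gives the invariant. The first step is to show that this invariant is functorial under framed isotopy. We then compute it by cutting $\RR^3$ along a height function into slabs $\RR^2\times[t_i,t_{i+1}]$ in which $K$ is a union of elementary tangles (caps, cups, crossings, framing twists). By excision, the trace becomes the composite of the morphisms in $\Perf(\cA^\lambda)$ assigned to these elementary pieces. It is therefore the Reshetikhin--Turaev functor for the ribbon category $\Perf(\cA^\lambda)$ applied to $K$ and evaluated on $V$.

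The second step identifies this ribbon category with $\Rep_{fd}(\U_\hbar\g)$ as a braided monoidal category over $\CC[[\hbar]]$. We construct $\cA^\lambda$ by BV quantization of Chern--Simons theory, working in the Costello--Gwilliam formalism on $\RR^3$. Its classical limit is $C^\bullet(\g)$ viewed as an $\cE_3$-algebra, and its first-order deformation is controlled by $\lambda$. Koszul duality, via $\cE_3$-Koszul duality with $C^\bullet(\g)$, exchanges $\cE_3$-modules over $\cA^\lambda$ with modules over an $\cE_2$-deformation of $\U\g$. Concretely, we restrict the quantization to $\RR\times\RR^2$ with a holomorphic-topological gauge. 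This exhibits the line-operator category as modules for a quasitriangular Hopf algebra deforming $\U\g$, with classical $r$-matrix the Casimir of $\lambda$. Drinfeld's rigidity theorem applies because $H^2$ and $H^3$ of $\g$ with adjoint coefficients vanish for semisimple $\g$. Together with the Drinfeld--Kohno theorem, it then identifies this braided deformation with $\U_\hbar\g$, up to twist and gauge equivalence. Twist equivalences act as braided monoidal equivalences, so they do not change Reshetikhin--Turaev invariants. Finite-dimensional $\U_\hbar\g$-modules are topologically free deformations of $\g$-modules, so they give perfect $\cA^\lambda$-modules. This yields the module $V$ asserted in the theorem.

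Finally we match the ribbon (twist) structures, checking the framing normalization on the unknot with one twist. The Reshetikhin--Turaev invariant is determined by the ribbon functor, so the two invariants agree. The main obstacle is the second step: pinning down the braiding produced by the quantized $\cE_3$-structure exactly, not merely up to first order. We would first try to show that the deformation is classified by $\lambda$ alone, arguing from the absence of further obstructions and deformations (cohomology of $\g$ vanishing in degrees 1, 2). The braided category is then forced to be the one determined by Drinfeld's uniqueness, normalized by its leading $r$-matrix term.
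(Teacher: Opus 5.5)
Your two-step architecture matches the paper's. First you show, for an arbitrary $\cE_3$-algebra $A$ and perfect module $V$, that the trace equals the tangle invariant of $\Perf_A$ at $V$; then you identify $\Perf_{\cA^\lambda}$ with representations of $\U_\hbar\g$. Your slicing-and-excision argument for the first step is a legitimate, more hands-on substitute for what the paper does by citing the $1$-dimensional Tangle Hypothesis and the alpha/beta compatibility of Theorem~\ref{betaclaims}. Its real content is that a crossing slab yields the braiding on $\Perf_A$ induced by the $\cE_3$-structure, and that cups and caps yield the duality data. You assert this rather than prove it, but it is exactly the input the paper outsources.

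\textbf{The gap is in the second step}, where you expected trouble. Classification by the level is correct, but it does not give the rigidity you then invoke. BV quantizations of Chern--Simons theory, $\cE_3$-deformations of $\C^*(\g)$ with fixed semiclassical bracket, and braided deformations of $\Rep_\fin(\g)$ are each torsors over $\hbar\H^3(\g)[\![\hbar]\!]$. This is nonzero for semisimple $\g$, so a new free parameter enters at every order in $\hbar$. Vanishing of $\H^1(\g)$, $\H^2(\g)$, or of cohomology with adjoint coefficients governs things like rigidity of $\U\g$ as an algebra. It says nothing about the braiding, whose deformation group is $\H^3(\g)$ (with obstructions in $\H^4(\g)=0$).

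Hence Drinfeld's classification ``normalized by the leading $r$-matrix'' does not single out $\U_\hbar\g$. Take levels $\lambda^{\rm cl}$ and $\lambda^{\rm cl}+\hbar\lambda_1$ with $\lambda_1\neq 0$. They have the same leading $r$-matrix but inequivalent braided categories and different link invariants; for instance, the eigenvalues of the braiding on $V\otimes V$, for $V$ a nontrivial irreducible, depend on the whole series. The familiar shift $k\mapsto k+h^\vee$ is a higher-order reparametrization of exactly this kind. So your closing unknot computation cannot be a mere normalization check. Something like it must be used to \emph{fix} the correspondence between the full series $\lambda(\hbar)$ and the Drinfeld--Jimbo parameter. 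The holomorphic--topological gauge step, which is supposed to produce the $R$-matrix, is asserted without argument. Even granting it, the identification would fail for the reason just given.

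The paper never extracts an $R$-matrix from the field theory. Its key device is the lemma ${\rm tDef}(A)\simeq{\rm tDef}(\Perf_A)$: $\cE_2$-trivialized $\cE_3$-deformations of $A$ correspond to monoidally trivialized $\cE_2$-deformations of $\Perf_A$. The inverse is $\End(\mathbf{1})$, with its $\cE_3$-structure supplied by the Deligne conjecture and Dunn additivity. The paper combines this with filtered Koszul duality (Corollary~\ref{filt KD}). Note that it is the filtered, completed setting, and left modules over the underlying $\cE_1$-algebra rather than ``$\cE_3$-modules,'' that make $\Perf_{\C^*(\g)}\simeq\Rep_\fin(\g)^{\dg}$ hold. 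Together these transport the Drinfeld--Jimbo deformation to a filtered $\cE_3$-algebra $\C^*_\hbar(\g)=\End(\mathbf{1})$ with $\Perf_{\C^*_\hbar(\g)}\simeq\Rep^{\dg}_{\fin}(\U_\hbar\g)$ as $\cE_2$-monoidal categories, by construction (Corollary~\ref{cor.filt.E3.q.gp}). The BV algebra $\cA^\lambda$ is then matched to it through the deformation-theoretic bijection of Theorem~\ref{first main theorem}, with the level (equivalently, a reparametrization of $\hbar$) chosen to correspond. That choice, not leading-order data, is what pins down the braiding to all orders.
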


In the remainder of this introduction we explain the constituent parts of this result, 
and how the factorization homology invariant $\int_{K\subset M} \tr(V)$ can be seen as a formalization of Witten's invariant.

\subsection{Sharpening the question}

To clarify what we accomplish, 
it is helpful to review what is already known in this direction.

Soon after Witten introduced the Chern--Simons theory,
its perturbative quantization was explored by many physicists and mathematicians
(see, for instance, \cite{GMM,GuaBook,AxeSinI,AxeSinII,KonECM,BarCS}).
The challenge of renormalization is very mild here, 
allowing important structural work to be done. 
Moreover, a careful treatment of the Wilson loop observables
led to a new perspective on Vassiliev invariants and surprising connections to many facets of geometric topology (see, e.g., \cite{BarVI,AltFreVCS,BotTau,VolSurv}).

We review this work in Section \ref{sec: CS},
where we explain how it fits into the Batalin--Vilkovisky (BV) formalism,
as articulated in~\cite{CosBook}.
An aspect important to our discussion now is that one can construct a perturbative quantization of a classical field theory by obstruction theory.  
That is, the problem of constructing a field theory at the quantum level becomes a cohomological one: 
one needs to calculate the obstruction and deformation groups, 
which are respectively $\H^1$ and $\H^0$ of a certain cochain complex,
and one needs to identify the obstruction cocycle. 

For Chern--Simons theory on $\RR^3$, the obstruction and deformation groups that appear are, respectively, $\H^4(\g)$ and $\H^3(\g)$. 
For any simple Lie algebra $\g$, $\H^4(\g)$ vanishes, so there is never any obstruction. 
Since $\H^3(\g)$ is 1-dimensional, at each order in $\hbar$ there is a single parameter governing deformations. 
This parameter can be realized by having a $\hbar$-dependent {\em level} 
\[
\lambda = \lambda^\cl + \hbar \lambda_1 + \hbar^2 \lambda_2 + \cdots,
\]
where $\lambda^\cl$ is a nondegenerate invariant symmetric bilinear form on $\g$
and the perturbative choices are invariant symmetric bilinear forms.  Since we assume $\g$ is simple, the space of symmetric invariant bilinear forms on $\g$ is 1-dimensional. Note here an important difference with Witten's work: 
Witten requires the level to be integral (or more accurately, to live in $\H^4({\rm B} G,\ZZ)$), whereas the perturbative setting requires no such condition.

Thus, one obtains the following result,
which was implicit in the work of Axelrod--Singer and Kontsevich and articulated fully in~\cite{CosCS,CosBook}.

\begin{thm}[Thm. 14.2.1, Chapter 5, \cite{CosBook}]
\label{thm: levels for CS}
The classical BV theory for Chern--Simons theory on $\RR^3$ with Lie algebra $\g$ and pairing $\lambda^\cl$ admits BV quantizations.
The set of BV quantizations (to all powers of $\hbar$) modulo equivalences is a torsor over $\hbar\CC[\![\hbar]\!] \otimes \H^3(\g)$.
\end{thm}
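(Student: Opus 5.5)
The argument runs by obstruction theory in the BV formalism of \cite{CosBook}, organized order by order in $\hbar$. We begin with the classical BV theory: fields $\Omega^\bullet(\RR^3)\otimes\g[1]$ with action $\tfrac12\lambda^\cl(A,dA)+\tfrac16\lambda^\cl(A,[A,A])$. A quantization is a family of effective interactions $\{I[L]\}$ satisfying the renormalization group flow and the scale-$L$ quantum master equation, which we construct inductively. Suppose a quantization modulo $\hbar^{n+1}$ has been built. Its failure to satisfy the quantum master equation at order $\hbar^{n+1}$ is a local functional. The standard argument shows this functional is a degree~$1$ cocycle in the deformation complex of the classical theory, namely the local cochains $\Oloc$ with differential $\{S^\cl,-\}$. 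Equivalence classes of lifts form a torsor for the degree~$0$ cohomology of the same complex. The torsor statement over the full tower then follows by a standard inductive limit argument over the order in $\hbar$.

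The key computation identifies the cohomology of the local deformation complex. We use translation invariance on $\RR^3$ and the standard identification of local functionals with $\Omega^3_{\RR^3}\otimes_{D}$ applied to the jet-level Chevalley--Eilenberg complex. Since the classical theory is the $\Omega^\bullet$-linear extension of $\g$, the jets of fields are a resolution of $\g$ by the de Rham complex of formal power series. By the formal Poincar\'e lemma, the Chevalley--Eilenberg cochains of jets are quasi-isomorphic to $C^\bullet(\g)$. Passing to densities modulo total derivatives shifts degree by $3$, giving
\[
\H^\bullet\bigl(\Oloc\bigr)\cong \H^{\bullet+3}_{\mathrm{red}}(\g),
\]
up to the usual treatment of constants, which are irrelevant to the master equation. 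Obstructions therefore lie in $\H^4(\g)$ and deformations in $\H^3(\g)$. For $\g$ semisimple, Whitehead's lemmas give $\H^4(\g)=0$, since the cohomology of $\g$ is an exterior algebra on generators in degrees $3,5,\dots$ with no class in degree $4$. Hence every order lifts, and the lifts form a torsor over $\H^3(\g)$ at each order, matching the statement.

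The main obstacle is making the identification of local cohomology with $\H^\bullet(\g)$ shifted by $3$ rigorous, together with the fact that the scale-dependent obstruction really is a local cocycle. The latter requires a gauge-fixing compatible with the RG flow. We would use the Hodge gauge $d^*$ with heat-kernel regularization, and invoke the general theorem of \cite{CosBook} that counterterms exist and that the obstruction is local and closed. Alternatively, we would cite the vanishing of counterterms for Chern--Simons theory in this gauge, following the work of Axelrod--Singer and Kontsevich. Given that general machinery, the remaining content is the jet-level spectral sequence computation sketched above, which we would carry out via the Koszul/de Rham resolution.
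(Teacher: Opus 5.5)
Your proposal is correct and follows essentially the same route as the paper. The paper cites Costello's obstruction theory and the identification $\Def_{CS}\simeq \C^{\geq 1}_{\Lie}(\g)[3]$, so obstructions live in $\H^4(\g)=0$ and each order in $\hbar$ contributes an $\H^3(\g)$'s worth of choices, realized concretely by an $\hbar$-dependent level in the configuration-space quantization of the appendix. One small correction: $\H^4(\g)=0$ comes from the description of $\H^*(\g)$ as an exterior algebra on odd generators of degree at least $3$, which you do quote, not from Whitehead's lemmas (those concern $\H^1$ and $\H^2$ with coefficients); also, translation invariance is unnecessary for the local cohomology computation, since the model $\Omega^*(\RR^3)\otimes\C^{\geq 1}(\g)[3]$ already has cohomology $\H^{\bullet+3}_{\mathrm{red}}(\g)$ by the Poincar\'e lemma.
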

The proof of this result is rather straightforward once the formalism of \cite{CosBook} is established: no explicit analysis of Feynman diagrams is needed. 

There is also a ``perturbative'' facet to the quantum group approach.
Reshetikhin and Turaev need only a rigid braided monoidal category\footnote{Recall that a rigid braided monoidal category is a braided monoidal category in which every object has a dual.  Reshetikhin and Turaev construct a knot invariant given a choice of dualizable object in braided monoidal category;
the dualizable objects in a braided monoidal category provide such a rigid category.}  to recover functorial invariants of framed tangles \cite{RT},
and such are provided by the quantum groups with formal parameter $\hbar$,
also known as quantum enveloping algebras:
take the finite-rank topologically free representations of~$\U_\hbar \g$.\footnote{The later work of Reshetikhin and Turaev \cite{RT3man} requires a much stronger input---namely, a modular tensor category---to recover functorial invariants of oriented 3-manifolds.
These 3-manifold invariants are not amenable to the perturbative treatment of this paper.} 
This example is not, however, the only ``quantization'' of~$\U\g$. 

Drinfeld classified the deformations of $\Rep_\fin(\g)$, the symmetric monoidal ribbon\footnote{The ribbon structure is a natural automorphism on the identity functor, which corresponds to working with framed tangles.} category of finite-dimensional representations of $\g$, in his influential work \cite{DriQHA,DriACHA,DriGal}. 
(For a general discussion of deformations of such categories, see Section 7.22 of~\cite{EGNO}. 
For nice expositions of Drinfeld's result, see chapter 16 both of \cite{ChaPre} and of~\cite{EtiSch}.)

\begin{thm}[Drinfeld]
The symmetric monoidal ribbon category $\Rep_\fin(\g)$ admits nontrivial deformations as a ribbon category.
The set of deformations over $\CC[\![\hbar]\!]$ modulo ribbon equivalences is a torsor over $\hbar\CC[\![\hbar]\!] \otimes \H^3(\g)$.
\end{thm}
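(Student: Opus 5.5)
The plan is to reduce the classification to Hochschild-type deformation theory of $\Rep_\fin(\g)$ as a braided (ribbon) monoidal category, and then to compute the relevant cohomology groups using the semisimplicity of $\g$. The tool is Davydov--Yetter cohomology. First-order deformations of the associator of a monoidal category $\mathcal{C}$ with fiber functor, modulo gauge by twists, are governed by the cohomology of the complex $C^n = \End(\otimes^n)$ with the cobar-type differential. For $\mathcal{C}=\Rep_\fin(\g)$ we have $\End(\otimes^n)=(\U\g^{\otimes n})^{\g}$. The cobar complex of $\U\g$ with the co-commutative coproduct has cohomology $\Lambda^\bullet\g$, by the PBW and Koszul argument: the antisymmetrization map $\Lambda^n\g\to \U\g^{\otimes n}$ is a quasi-isomorphism. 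Taking $\g$-invariants is exact because $\g$ is semisimple and the representations are finite-dimensional. Hence the deformation cohomology is $(\Lambda^\bullet\g)^\g\cong \H^\bullet(\g)$.

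Next, include the braiding and the ribbon twist. Deformations of the pair (associator, braiding) are controlled by a bicomplex. Its symmetric part is $\Sym^2(\g)^\g$, which gives deformations of the braiding $e^{\hbar t/2}$ through the invariant pairing $t$. It is coupled to $(\Lambda^3\g)^\g$ via the hexagon. I would run the obstruction-theoretic induction order by order in $\hbar$, exactly parallel to Theorem~\ref{thm: levels for CS}. At each order, obstructions live in $\H^4$-type groups. These are $(\Lambda^4\g)^\g=\H^4(\g)=0$ for simple $\g$, or, in the braided setting, a mixed group which vanishes by the same invariant-theory computation. So deformations extend to all orders, and the ambiguity at each order is the one-dimensional space $\H^3(\g)\cong\Sym^2(\g^*)^\g$ of invariant pairings. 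This space is identified with the change in the Casimir $t$ via the Cartan 3-form $\langle[x,y],z\rangle$. The ribbon element is then determined up to a discrete choice fixed by deforming the classical one continuously, so it contributes no further moduli.

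For existence and nontriviality I would rely on Drinfeld's construction directly. For each formal level $\lambda$, the Drinfeld category has braiding $e^{\hbar t_\lambda/2}$ and a KZ associator $\Phi_{KZ}(\hbar t_{12},\hbar t_{23})$. Its first-order class is $[t_{12},t_{23}]$, which corresponds to the nonzero Cartan class in $\H^3(\g)$. This realizes every point of the torsor. The torsor action is rescaling $\lambda$ by $\hbar\CC[\![\hbar]\!]$, and its freeness follows from the cohomology computation, since distinct first nonvanishing difference classes are distinct in $\H^3(\g)$.

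The main obstacle is the precise identification of the braided deformation complex and the vanishing of its obstruction group. One must show that braided deformations modulo ribbon equivalence reduce to $\H^3(\g)$ rather than to $\H^3(\g)\oplus\Sym^2(\g)^\g$ counted separately. This requires checking that the hexagon couples these two spaces so that only the diagonal class survives. I would first attempt this by passing to infinitesimal braidings in the sense of Cartier: compute the cohomology of the complex of $\g$-invariant infinitesimal braidings, and use the Kohno--Drinfeld identification of Drinfeld categories with the KZ monodromy to pin down the surviving class.
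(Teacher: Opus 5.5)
\paragraph{Where the argument breaks.} The paper does not prove this theorem; it quotes Drinfeld and points to EGNO \S7.22 and to Chapter~16 of Chari--Pressley and of Etingof--Schiffmann. Measured against that, your proposal has a genuine gap exactly where you place the ``main obstacle''. The classification of braided deformations is the entire content of the torsor statement, and you never carry it out. The framework you set up for it also points the wrong way.

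Your Davydov--Yetter computation $(\Lambda^\bullet\g)^\g$ is correct, but it governs deformations of the associator alone. In a braided deformation of the symmetric category, that class is killed at first order. Twist so that $\Delta=\Delta_0$ and $\Phi=1+\hbar\phi$ with $\phi\in(\Lambda^3\g)^\g$. The linearized hexagon then reads $(\Delta_0\otimes 1)(r)-r_{13}-r_{23}=\phi_{312}-\phi_{132}+\phi_{123}=3\phi$. The left side is symmetric in the first two slots by cocommutativity, while the right side is antisymmetric. Hence $\phi=0$, and $r$ is primitive in each factor, i.e.\ $r\in(\g\otimes\g)^\g=(\Sym^2\g)^\g$ because $(\Lambda^2\g)^\g=0$.

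So the first-order datum is the infinitesimal braiding $t$. The $(\Lambda^3\g)^\g$ term of the associator first appears at order $\hbar^2$, as a multiple of $[t_{12},t_{23}]$, i.e.\ quadratically in $t$; it is not the ``first-order class'' of the Drinfeld category. No ``diagonal'' in $\H^3(\g)\oplus(\Sym^2\g)^\g$ survives. Instead, the free parameter at each order is a new coefficient $t_n\in(\Sym^2\g)^\g$, matched with $\H^3(\g)$ only through the isomorphism $(\Sym^2\g)^\g\cong(\Sym^2\g^*)^\g\cong\H^3(\g)$. Your claim that a ``mixed'' obstruction group vanishes by the same invariant theory is unsupported. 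It is also unnecessary, since KZ already gives existence for every $t$.

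\paragraph{The missing ingredient.} What you need is Drinfeld's normal-form theorem, which is what the citation supplies. Every braided deformation of $\Rep_\fin(\g)$ (reconstructed via the forgetful functor) is twist-equivalent to $(\U\g[\![\hbar]\!],\Delta_0,\Phi(\hbar t_{12},\hbar t_{23}),e^{\hbar t/2})$, for a Drinfeld associator $\Phi$ (e.g.\ KZ) and a unique $t\in(\Sym^2\g)^\g[\![\hbar]\!]$.

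You can obtain this from your own order-by-order idea, but run it relative to these models rather than as abstract obstruction theory. First use Whitehead's $\H^1(\g,\U\g^{\otimes 2})=0$ to twist $\Delta$ back to $\Delta_0$. Suppose the deformation agrees with the model for $t_{<n}$ modulo $\hbar^n$. Then the order-$n$ discrepancy $(\phi_n,r_n)$ solves the same linearized pentagon and hexagons as above. Write $\phi_n=\alpha+df$ with $f$ invariant and twist by $1-\hbar^n f$. The hexagons then force $\alpha=0$ and leave $r_n\in(\Sym^2\g)^\g$, which you absorb as the next coefficient of $t$. The induction continues because the model's associator only moves at order $\hbar^{n+1}$ ($\Phi-1$ is at least quadratic in $t$) and the Casimir tensors commute. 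No obstruction ever has to be killed, since the deformation is given and you only compare it with a model.

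Injectivity should come from an invariant such as the ribbon twist $\theta_V$ or the double braiding $e^{\hbar t}$, whose eigenvalues on irreducibles recover $t$. It should not rest on ``first nonvanishing difference classes'' in a complex you have not computed.

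Three minor points. The action is by adding, not rescaling, elements of $\hbar\CC[\![\hbar]\!]\otimes\H^3(\g)$. $\H^3(\g)$ is one-dimensional only for simple $\g$. Your treatment of the ribbon element is fine.
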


Hence, we arrive at a central question addressed in this paper: 
\begin{quotation}
Is there a natural isomorphism between these two spaces of deformations?
\end{quotation}
More specifically, we ask {\em how} does each choice of BV quantization of Chern--Simons theory determine a deformation of $\Rep_\fin(\g)$ as a rigid braided monoidal category?

The answer is that such a natural isomorphism exists, 
and it will be realized by methods arising from a general story about how quantum field theory relates to higher algebra and categories.
(One compelling aspect of our approach is that it is not tailor-made for Chern--Simons theory,
but has applications to other theories, including non-topological theories.
See \cite{CosYang,CosSM,CWY} for closely related work.)

\subsection{The appearance of higher algebra}

Our approach does not proceed by direct computation or an involved analysis of the Feynman diagrammatics.
Instead, it leverages  the rich formal properties of quantum field theory,
much as Witten did in relating the Jones polynomial to Chern--Simons theory,
but here formalized via {\em factorization algebras}.
This notion is the novel ingredient in this work,
and it provides a bridge from perturbative quantum field theory to higher algebra.

In \cite{CG1,CG2}, it is shown how a perturbative quantum field theory is encoded in a factorization algebra over $\CC[\![\hbar]\!]$  called the {\it factorization algebra of observables} of the theory.  
The factorization algebra encodes the structures present on observables, such as the operator product and (in good cases) the correlation functions.
Such a factorization algebra assigns to each open set $U$ of the space-time manifold 
(e.g., for Chern--Simons theory, an oriented 3-manifold),
a cochain complex $\Obs(U)$ consisting of the \emph{observables} or \emph{operators} of the quantum field theory on $U$, i.e., the measurements one can make on $U$ in the field theory.
(It is a cochain complex because we are working in the BV formalism,
which is homological in nature.)

The basic structure of a factorization algebra is provided by the factorization product map. If $U_1,\dots,U_k$ are disjoint opens all contained in an open $V$, one has a map
\begin{equation} 
	 m_V^{U_1,\ldots,U_k}: \Obs(U_1)\otimes\cdots\otimes\Obs(U_k)\to\Obs(V)
\end{equation}	
as illustracted in figure \ref{fig:factorization}.  These maps must satisfy some straightforward axioms detailed in \cite{CG1}.
\begin{figure}
\begin{center}
 \begin{tikzpicture}[scale=0.8]
 \draw[dotted,semithick] (0,0) circle (2.5);
 \draw (-0.5,1) circle(0.5) node {$U_1$};
 \draw (-1.2,-0.5) circle (0.5) node {$U_2$};
 \draw (-0.3, -1) node {\dots};
 \draw (1.1,-1) circle (0.8) node {$U_k$};
 \draw (1.3, 1.5) node {$V$};
\node at (8,0){$\rightsquigarrow \quad m_V^{U_1,\ldots,U_k}: \Obs(U_1)\otimes\cdots\otimes\Obs(U_k)\to\Obs(V)$};
 \end{tikzpicture}
\end{center}
	\caption{ The factorization product map. \label{fig:factorization}}
\end{figure}
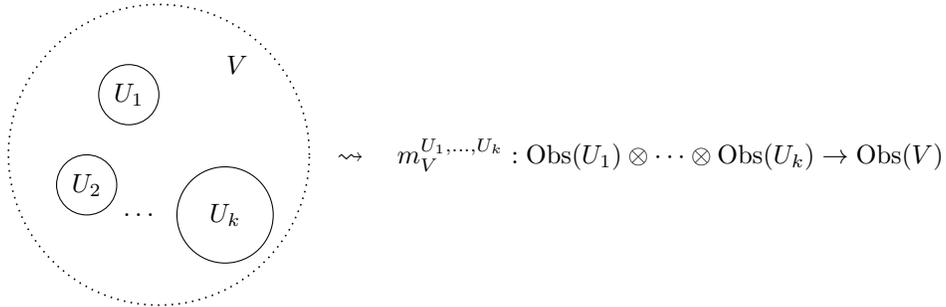
For a topological field theory, such as Chern--Simons theory,
this factorization algebra is {\em locally constant}:
if $D \subset D'$ is a pair of open balls,
then the map of extending observables from smaller to larger open sets
\[
\Obs(D) \to \Obs(D')
\]
is a quasi-isomorphism.
This property captures the fact that the geometry is irrelevant and only the topology matters.

A result of Lurie \cite{LurHA} says that locally constant factorization algebras on $\RR^3$ are the same as $\cE_3$-algebras,
i.e., algebras over the operad of little 3-disks.  (See Chapter 5, Section 4 of \cite{LurHA}, particularly Theorem 5.4.5.9.)    For instance, there is a 2-sphere's worth of ways of forming the product of observables, because the space of pairs of disjoint balls in a given 3-ball is homotopy equivalent to $S^2$.  

Therefore, by combining these existing results, we find the following. 
(See Section~\ref{subsec: nontrivial def}, notably Proposition~\ref{prp: CS quant exists}.)

\begin{prp*}
Fix a quantization $\lambda$ of Chern--Simons theory for a Lie algebra $\g$.  
Then the operator products between observables of this quantum theory are described by an $\cE_3$-algebra $\cA^\lambda$ over~$\CC[\![\hbar]\!]$. 
\end{prp*}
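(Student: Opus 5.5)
The plan is to assemble three existing results, with essentially no new computation. First, I would invoke Theorem~\ref{thm: levels for CS}: for the chosen $\lambda = \lambda^\cl + \hbar\lambda_1 + \cdots$, it supplies a BV quantization of Chern--Simons theory on $\RR^3$. Concretely, this is a family of effective interactions $\{I[L]\}$ satisfying the renormalization group flow and the scale-$L$ quantum master equation, modulo homotopy. Second, I would apply the main construction of \cite{CG2}. It converts any such BV quantization into a factorization algebra $\Obs^q_\lambda$ on $\RR^3$ valued in cochain complexes over $\CC[\![\hbar]\!]$. Its value on an open $U$ is the complex of quantum observables supported in $U$, with differential $Q + \{I[L],-\}_L + \hbar\Delta_L$. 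The factorization products are defined by multiplying observables at large enough scale. Equivalent quantizations give quasi-isomorphic factorization algebras, so $\Obs^q_\lambda$ depends only on the class of $\lambda$.

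The third step is to show that $\Obs^q_\lambda$ is locally constant. I would do this by filtering by powers of $\hbar$, which is complete and exhaustive since we work over $\CC[\![\hbar]\!]$. The associated graded of the filtration is the classical observables $\Obs^\cl \otimes \CC[\![\hbar]\!]$, and locally constant here means that for balls $D \subset D'$ the map $\Obs^q_\lambda(D) \to \Obs^q_\lambda(D')$ is a quasi-isomorphism.

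For the classical observables the check is concrete. $\Obs^\cl(D)$ is the completed symmetric algebra on the compactly supported dual of $\Omega^*(D)\otimes\g[1]$, with the Chevalley--Eilenberg-type differential. Because $\Omega^*_c(D)$ is quasi-isomorphic to $\CC[-3]$ for any ball, and the inclusion of balls induces this quasi-isomorphism, both $\Obs^\cl(D)$ and $\Obs^\cl(D')$ are quasi-isomorphic to $C^*(\g)$ via compatible maps. A spectral sequence comparison along the $\hbar$-filtration, which converges by completeness, then lifts the quasi-isomorphism to the quantum level.

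Finally, I would invoke Lurie's theorem (Theorem 5.4.5.9 of \cite{LurHA}): locally constant factorization algebras on $\RR^3$ are equivalent to $\cE_3$-algebras, and this applies in the $\infty$-category of $\CC[\![\hbar]\!]$-modules. Setting $\cA^\lambda$ to be the $\cE_3$-algebra corresponding to $\Obs^q_\lambda$, its underlying object is $\Obs^q_\lambda(D)$ for a ball $D$. The $\cE_3$-multiplications are the factorization products for disjoint balls, which is exactly the meaning of operator products in the statement.

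The main obstacle I expect is the technical input needed for Lurie's theorem. His result is stated for factorization algebras valued in a symmetric monoidal $\infty$-category satisfying suitable hypotheses, and it requires the factorization algebra to be multiplicative, meaning the value on a disjoint union is the tensor product. In \cite{CG2}, however, observables live in differentiable or bornological vector spaces with completed tensor products. So I would first pass to an underlying category, such as cochain complexes of $\CC[\![\hbar]\!]$-modules with the completed tensor product, or simply take cohomology-level data. I would then check that the multiplicativity maps $\Obs(U_1)\widehat\otimes\Obs(U_2)\to\Obs(U_1\sqcup U_2)$ are quasi-isomorphisms there. I would attempt this again through the $\hbar$-filtration, reducing to the classical statement $\widehat{\Sym}(A\oplus B)\simeq\widehat{\Sym}(A)\widehat\otimes\widehat{\Sym}(B)$. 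The filtration also gives $\cA^\lambda$ the filtered structure mentioned in Theorem~\ref{thm: central}.
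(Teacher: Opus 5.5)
Your argument is essentially the paper's: existence of the BV quantization from \cite{CosBook}, the factorization algebra of \cite{CG2}, local constancy by a spectral-sequence comparison with the locally constant classical observables, and then Lurie's Theorem 5.4.5.9. The only difference is the filtration: the paper first filters by $2m+n$, so the associated graded is abelian Chern--Simons theory, and only then by $\hbar$, while your direct $\hbar$-adic filtration suffices for this unfiltered statement. Note, though, that the filtration making $\cA^\lambda$ a \emph{filtered} $\cE_3$-algebra is that $2m+n$ one, not the plain $\hbar$-adic filtration you mention at the end.
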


A finer analysis shows that observables form a filtered $\cE_3$-algebra  over the filtered commutative algebra $\CC[\![\hbar]\!]$, 
which is filtered by saying that $F^i \CC[\![\hbar]\!] = \hbar^{2i} \CC[\![\hbar]\!]$.  
(Since we are only sketching our arguments in this introduction, 
we will not go into details here about filtrations.)

This result allows us to analyze the observables of Chern--Simons theory using the powerful theory of $\cE_3$-algebras and their topological factorization homology.  
Topologists introduced $\cE_n$-algebras in the 1960's and 1970's to describe the structure on $n$-fold loop spaces: see \cite{BoaVogAMS,BoaVogLNM,MayLoop}.  
Topological forms of factorization homology were developed by Segal, McDuff, Salvatore, Lurie, Ayala--Francis \cite{Segal,McDuff,SalCS,LurHA,AF}.  
Early work on what we call topological factorization homology focused on its applications to the study of mapping spaces \cite{LurHA}.  
More recent work exhibits topological factorization homology as an important tool in the study of topological quantum field theories~\cite{LurTFT,AF,Sch,BBJ1,BBJ2}.

As we will see shortly, this theory will allow us to relate the $\cE_3$-algebra of observables of Chern--Simons theory to the quantum group.  

One can ask what is the filtered $\cE_3$-algebra of the proposition.  
The cochain complex underlying this $\cE_3$-algebra is the space of observables in Chern--Simons theory on a $3$-ball.  
In any field theory, classical observables are defined to be the space of functions on a certain derived version of the equations of motion. 
Because we are working in perturbation theory, we look at the derived space of solutions to the equations of motion near a given (trivial) solution. 
Since, for Chern--Simons theory, solutions to the equations of motion are flat bundles, 
we find that classical observables on a ball are functions on the formal derived stack of flat bundles on a ball.  
Since all flat bundles on a ball are trivial, this formal derived stack is the formal neighborhood of a point in the classifying stack of the gauge group $G$.  
This formal neighborhood can be thought of as the classifying space of the Lie algebra $\g$.  
Functions on ${\rm B}\g$ are $\g$-invariant functions on a point, 
which can be modelled homologically by the Chevalley--Eilenberg cochain complex $\C^*(\g)$ of $\g$ with coefficients in the trivial module. 
Thus, we have sketched the following. See Section~\ref{sec: classical observables}, notably Lemma~\ref{poin}, for a fuller discussion.

\begin{lmm*}
The classical observables of Chern--Simons theory are described by the filtered commutative algebra~$\C^*(\g)$, the Chevalley--Eilenberg cochains of the Lie algebra~$\g$.
\end{lmm*}

The filtration is defined by setting $F^i \C^*(\g) = \Sym^{\ge i} (\g^\vee[-1])$.

The filtered $\cE_3$-algebra describing quantum observables is a filtered deformation of $\C^*(\g)$. 
It will be given by an $\cE_3$ structure on the filtered cochain complex $\C^*(\g)[\![\hbar]\!]$, 
which modulo $\hbar$ is just the commutative algebra structure on~$\C^*(\g)$.

\subsection{Why are there local observables in Chern--Simons theory?}

We address here an aspect that would bother a physicist.
In a physics treatment of Chern--Simons theory, one would normally say that there are no local observables and hence a physicist would ask why we find an interesting $\cE_3$-algebra.

The observables we are finding here---which form a copy of the exterior algebra on $\g^\ast$---can be described, in physical terminology, as being built from the ghosts for constant gauge transformations.  

In physics, the standard procedure (with a compact gauge group) does {\em not} introduce ghosts for constant gauge transformations.  
From a mathematics point of view, this choice is reasonable because the functor of taking invariants for the action of a compact Lie group is an exact functor, 
and so it does not need to be replaced by a derived functor. 
By contrast, the functor of taking invariants for the action of a Lie algebra is not exact, 
and hence has a derived functor, namely the Chevalley--Eilenberg cochains.  
Ghosts are introduced in the process of taking the derived functor of invariants for the action of the infinite-dimensional Lie algebra of gauge transformations.  
We do not need to introduce ghosts for constant gauge transformations because they arise from the action of a compact Lie group.

Our treatment of Chern--Simons theory is purely perturbative, however, and hence only sees the Lie algebra. 
In fact, we only need the complex form of the Lie algebra: 
a choice of real form plays a role in the contour of integration for the path integral, 
but this contour is irrelevant in perturbation theory.  
In a perturbative treatment like ours, it is thus necessary to introduce ghosts for constant gauge transformations.  

\subsection{Connecting with categories}
\label{sec: connect with cats}

Our discussion so far has shown how the problem of analyzing the structure of operator products in Chern--Simons theory (which, at first sight, involves calculation of many Feynman diagrams) can be turned into an algebraic problem: 
understanding deformations of $\C^*(\g)$ as a filtered $\cE_3$-algebra. 

It turns out that there is an intimate relationship between such $\cE_3$ deformations and the quantum group.  

It is a result of \cite{LurHA} that for any $\cE_n$-algebra $\cA$, 
the $\infty$-category of left modules $\LMod_\cA$---obtained 
by ``forgetting'' $\cA$ down to its underlying $\cE_1$ (or homotopy associative) algebra and taking its left 
modules---inherits a natural $\cE_{n-1}$-monoidal structure from~$\cA$.
(Simply combine Theorem 4.8.5.5 and Corollary 5.1.2.6 from \cite{LurHA}.)
Since the $\cE_3$-algebra $\cA^\lambda$ of quantum observables of Chern--Simons deforms the $\cE_\infty$ (or commutative) algebra $\C^*(\g)$, 
we find that the braided monoidal category of left modules will deform the symmetric monoidal category of (filtered) $\C^*(\g)$-modules.  

This category $\LMod_{\C^*(\g)}$ of left modules is closely related to a category of more obvious importance to us: representations of $\g$.
Indeed, every representation $M$ of $\g$ determines the $\C^*(\g)$-module $\C^*(\g,M)$.
In \cite{CosYang}, a filtered version of Koszul duality is developed for just this kind of situation,
and it is shown that there is an equivalence of symmetric monoidal $\infty$-categories 
\begin{equation}
\label{filtKD}
\Perf_{\C^*(\g)} \simeq \Rep_\fin(\g)^{\dg}
\end{equation}
by a filtered version of Koszul duality (see Corollary~\ref{filt KD}).
In brief, if $M$ is of finite-type, then $\C^*(\g,M)$ is perfect.
Note that on the left side we use tensor product over $\C^*(\g)$, 
whereas on the right side we use tensor product over the base field.
(See Section~\ref{sec: filtered KD} for more exposition.)

Thus, a nontrivial deformation of $\C^*(\g)$ as an $\cE_3$-algebra
determines a nontrivial deformation of $\Perf_{\C^*(\g))}$ as an $\cE_2$-monoidal $\infty$-category,
and hence a nontrivial $\cE_2$-monoidal deformation of~$\Rep_\fin(\g)^{\dg}$.

The usual category $\Rep_\fin(\g)$ sits as a full subcategory inside $\Rep_\fin(\g)^{\dg}$, 
and it is manifest from the construction that this subcategory is preserved by the tensor product and its braided monoidal deformations.
Hence, we recover nontrivial braided monoidal deformations of the underlying category, as well.

To summarize, thanks to higher abstract nonsense developed over the last few decades,
there is a direct pipeline from the setting of perturbative quantum field theory to the setting of higher categories.
This pipeline explains, in structural terms, why the perturbative quantization of Chern--Simons theory produces a braided monoidal deformation of the category of representations.
(We emphasize that this pipeline applies also to a {\em generic} topological field theory arising from an action functional.)

\subsection{The main identification}

According to Drinfeld \cite{DriACHA,DriQHA}, another algebraic structure also produces braided monoidal deformations of the category of finite-dimensional $\g$-modules. 
If $\U_{\hbar}(\g)$ is a quasi-triangular quasi-Hopf algebra over $\CC[\![\hbar]\!]$ that quantizes $\U(\g)$, then the category of these $\U_{\hbar}(\g)$-modules is a braided monoidal (in fact, ribbon) deformation of the category of $\g$-modules.  
If we twist this quasi-triangular quasi-Hopf algebra, in the sense of Drinfeld,
then the braided monoidal category does not change.

Assembling our discussion thus far, we obtain the first main result of this paper.

\begin{thm}
\label{first main theorem}
Let $\g$ be a semisimple Lie algebra.
There is a canonical bijection between the following spaces:
\begin{enumerate} 
 \item Perturbative quantizations of Chern--Simons theory in the sense of~\cite{CosBook}. 
 \item Filtered $\cE_3$-algebras quantizing $\C^*(\g)$ with fixed semi-classical behavior. 
 (That is, we fix the shifted Poisson bracket on $\C^*(\g)$ that encodes the failure of the quantized $\cE_3$-algebra to be commutative modulo~$\hbar$.) 
 \item Braided monoidal deformations of the symmetric monoidal category of finite-dimensional $\g$-modules, where the braiding has fixed semi-classical behavior. 
 \item Quasi-triangular quasi-Hopf algebras quantizing $\U(\g)$ with fixed semi-classical behavior, up to Drinfeld twist. (The semi-classical behavior is described by a quasi-triangular quasi-Lie bialgebra structure on $\g$; this data also fixes the semi-classical behavior of the $\cE_3$-algebra quantizing~$\C^*(\g)$.) 
\end{enumerate}
All of these spaces are non-canonically isomorphic to~$\hbar \H^3(\g)[\![\hbar]\!]$. 
\end{thm}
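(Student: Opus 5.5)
The plan is to build a chain of maps $(1)\to(2)\to(3)\leftarrow(4)$. Each link will be shown to be a map of torsors over $\hbar\H^3(\g)[\![\hbar]\!]$ that is equivariant up to a nonzero scalar at each order. Then every map is a bijection, and the common model $\hbar\H^3(\g)[\![\hbar]\!]$ comes out of Theorem~\ref{thm: levels for CS} together with Drinfeld's theorem.

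\textbf{Step $(1)\to(2)$.} Given a BV quantization, the results of \cite{CG1,CG2} produce its factorization algebra of observables. It is locally constant because the theory is topological, so Lurie's theorem turns it into an $\cE_3$-algebra $\cA^\lambda$. By the Lemma, its underlying filtered complex is $\C^*(\g)[\![\hbar]\!]$, and it reduces mod $\hbar$ to the commutative algebra $\C^*(\g)$. The semi-classical bracket is the shifted Poisson bracket on $\C^*(\g)$ coming from $\lambda^\cl$. This is the content of Proposition~\ref{prp: CS quant exists}.

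To compare the two sides I will run obstruction theory in both. Filtered $\cE_3$-deformations of $\C^*(\g)$ are controlled order by order by $\cE_3$-Hochschild cohomology. By the formality/HKR statement for $\cE_n$-algebras, this is polyvector fields shifted by $3$. After imposing the filtration constraint, the relevant deformation group is $\H^3(\g)$ and the obstruction group is $\H^4(\g)=0$. These match the groups in Theorem~\ref{thm: levels for CS}. It then remains to check that changing the level $\lambda\mapsto\lambda+\hbar^k\lambda_k$ moves the $\cE_3$-algebra by a nonzero multiple of the class $\lambda_k\in\H^3(\g)$ at order $k$. This is a first-order computation: the variation of the action is a local cocycle, and its image in observables on a ball is the Cartan $3$-cocycle $\lambda_k([-,-],-)$.

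\textbf{Step $(2)\to(3)$.} Take $\cE_2$-monoidal left modules, using \cite{LurHA} Thm.~4.8.5.5 and Cor.~5.1.2.6. Restrict to perfect filtered modules and apply the filtered Koszul duality \eqref{filtKD}, which gives $\Perf_{\C^*(\g)}\simeq\Rep_\fin(\g)^{\dg}$. Next restrict to the heart $\Rep_\fin(\g)$, which is preserved because it is semisimple and has no negative self-Exts. Braided deformations of $\Rep_\fin(\g)$ are governed by the Davydov--Yetter complex, whose groups are again $\H^3(\g)$ and $\H^4(\g)=0$. I will match the order-$k$ variation through this functor; Koszul duality should send the Cartan class to the associator class.

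\textbf{Step $(4)\to(3)$.} Send a quasi-Hopf algebra to its representation category. This map is well defined up to twist, and it is a bijection by Drinfeld's theorem together with Tannakian reconstruction: the fiber functor is rigid, since it deforms the forgetful functor and $\H^1,\H^2$ of the relevant Davydov--Yetter-type complex vanish.

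The main obstacle is the equivariance check in steps $(1)\to(2)\to(3)$, that is, proving that the first-order variation of the level really induces a \emph{nonzero} class at each stage, rather than the zero class. My first attempt will compute on a ball in the linearized theory. There the variation of the level changes the $\cE_3$ bracket by the classical shifted Poisson bracket coming from $\lambda_k$, whose nonvanishing is visible directly on $\g^\vee[-1]$. I would then use the compatibility of the HKR identification with passing to modules to transport this nonvanishing to the categorical side.
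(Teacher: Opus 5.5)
Your chain of maps matches the paper's except at $(2)\leftrightarrow(3)$. There the paper does not match classes. It writes down an inverse, $\cD\mapsto\End_\cD(1)$, with the $\cE_3$-structure coming from the Deligne conjecture and Dunn additivity (the lemma ${\rm tDef}(A)\simeq{\rm tDef}(\Perf_A(\cV))$). That lemma is stated for deformations that are trivial as monoidal categories. Drinfeld's associator, however, already has a nonzero Davydov--Yetter class at order $\hbar^2$ (a multiple of $[t_{12},t_{23}]$). So tracking the associator, as you do, is not redundant, although your claim that Koszul duality sends the Cartan class to it is only asserted.

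You are right that bijectivity needs each map to be an isomorphism on the order-by-order graded pieces. When $\g$ has several simple factors this means an isomorphism of $\H^3(\g)$, not merely a nonzero scalar. The paper only asserts this.

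The genuine gap is how you propose to check it for $(1)\to(2)$: the linearized theory cannot see higher-order level shifts.
\begin{itemize}
\item Take the filtration of Definition~\ref{dfn: obs filtration}, where $\hbar$ has weight $2$. Replacing $\lambda$ by $\lambda+\hbar^k\lambda_k$ with $k\geq1$ changes the differential only by terms raising filtration degree by at least $2k>0$. So the associated graded (abelian Chern--Simons theory for $|\g|$) depends on $\lambda^\cl$ alone. This is why Lemma~\ref{lmm: nontrivial P2 bracket} sees only $\lambda$ modulo $\hbar$.
\item Suppose instead you keep the $\hbar$-dependent pairing in the abelian theory. The bracket does change, by $-\hbar^{k+1}\pi\lambda_k\pi$ with $\pi=(\lambda^\cl)^{-1}$, but the change is gauge-trivial. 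An $\hbar$-dependent linear change of the generators $\g^\vee[-1]$ carries any nondegenerate symmetric form over $\CC[\![\hbar]\!]$ to its reduction. With no Chevalley--Eilenberg differential, every such change is an automorphism, since $\H^1(|\g|,|\g|)=\End(\g)$.
\item On the field side, $\hbar^k\langle A,\d A\rangle_{\lambda_k}$ is already BV-exact in the free theory; it is generated by a symplectic, form-degree-weighted rescaling of $A$. Modulo exact terms, $\hbar^kCS_{\lambda_k}$ is a multiple of its cubic (Cartan) term.
\end{itemize}
So a nonzero bracket change on $\g^\vee[-1]$ in the linearized theory certifies nothing. Also, the level change is a deformation of the structure, not an element of the observables on a ball; the Cartan cocycle is its class in $\H^0(\Def_{CS})\cong\H^3(\g)$.

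The check has to be done nonabelianly, in the $\cE_3$ tangent complex of $\C^*(\g)$. By HKR this is the weight-$\geq1$ polyvector complex $\C^*(\g,\Sym^{\geq1}(\g[-2]))$, suitably shifted. Do not use the full $\cE_3$-Hochschild complex: its weight-zero part would put $\H^5(\g)$ into the obstruction group, which is nonzero for $\mathfrak{sl}_n$ with $n\geq3$ and is not removed by the filtration.

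In the tangent complex:
\begin{itemize}
\item the deformation group is $\H^0(\g,\Sym^2\g)\oplus\H^2(\g,\g)=(\Sym^2\g)^\g$;
\item the obstruction group $\H^1(\g,\Sym^2\g)\oplus\H^3(\g,\g)$ vanishes.
\end{itemize}
The order-$\hbar^{k+1}$ correction $-\pi\lambda_k\pi$ could be absorbed only by a gauge term $[\pi,X]$. Here $X$ would be a degree-zero derivation of $\C^*(\g)$ commuting with the Chevalley--Eilenberg differential. Such $X$ are exactly the derivations of $\g$, which are all inner because $\H^1(\g,\g)=0$, and inner derivations fix the invariant $\pi$. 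Hence $\lambda_k\mapsto-\pi\lambda_k\pi$ is injective on each graded piece. Since $\lambda_k\mapsto\lambda_k([-,-],-)$ identifies $(\Sym^2\g^\vee)^\g$ with $\H^3(\g)$, this gives the isomorphism you need.
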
 

The relationship between (3) and (4) is due to Drinfeld.
Note that for (4), one must choose an associator to fix the isomorphism of deformations with $\hbar \H^3(\g)[\![\hbar]\!]$. 
Similarly, for (1) one must make a choice about how to do regularization (e.g., to use the configuration space method of Axelrod--Singer--Kontsevich, or a renormalization scheme in the sense of \cite{CosBook}) to fix the isomorphism with~$\hbar \H^3(\g)[\![\hbar]\!]$.

Our discussion above has explained that (1) leads to (2): 
a perturbative quantization of Chern--Simons theory leads to a filtered $\cE_3$-algebra quantizing $\C^*(\g)$, and its category of perfect left modules is a braided monoidal category quantizing the category of $\g$-modules.  
A deformation theory calculation shows that all of these maps are equivalences; 
the point is that at each order in $\hbar$, the obstruction-deformation complex for each quantization problem is controlled by~$\H^\ast(\g)[3]$.   

The connection from (2) to (3) was explained in Section~\ref{sec: connect with cats}: it combines Lurie's results on left modules for  $\cE_3$-algebras with the filtered Koszul duality of Corollary~\ref{filt KD}.

\begin{rmk}
Note that a Drinfeld associator lifts the semiclassical data to the quantum level,
so that once an associator is chosen, there is a canonical equivalence between (3) and~(4).
\end{rmk}

\subsection{Wilson loops and modules for an $\cE_3$-algebra}
\label{sec: wilson loop and fermions}

Our discussion so far has explained how to produce an $\cE_3$-algebra from Chern--Simons quantum field theory, 
and then how the ribbon category of perfect left modules for this $\cE_3$-algebra recovers the category of representations of the quantum group.

The work of Reshetikhin--Turaev goes further: 
it takes a representation of the quantum group---equivalently, a perfect module for the dual $\cE_3$-algebra---and produces a knot invariant. 
In parallel, Witten's construction takes a Wilson line in Chern--Simons theory and produces a knot invariant.  
We would therefore expect a direct connection between perfect left modules for the $\cE_3$-algebra and Wilson lines.

Such a connection is provided by considering ``constructible'' factorization algebras associated to defects for the quantum field theory. 
Let us sketch how to do this, 
focusing on the perspective from physics.
In the next subsection we give a precise mathematical articulation.

Suppose we are studying a classical field theory on a manifold $M$, and we choose a submanifold $N \subset M$.  
One can introduce a {\it defect} on $N$ by introducing extra fields that live only on $N$, 
but whose Lagrangian also depends on the fields of the original theory on $M$. 
In other words, we have a kind of field theory on $N$ that is coupled to the ambient theory on $M$.
The observables of this theory with a defect will form a constructible factorization algebra: 
away from $N$ the observables will agree with the factorization algebra of the original, defect-free theory, 
while on open sets of $M$ that intersect $N$, 
the observables will be enhanced by extra observables built from the fields living on~$N$.   

In order to explain how Wilson lines are related to $\cE_3$ modules, 
we will have to introduce defects for Chern--Simons theory that lead to Wilson line observables 
and then will study the constructible factorization algebras coming from these defects.

It is a well-established idea that a Wilson line observable can be realized as a defect consisting of a quantum mechanical system, living on the line, that couples to the ambient gauge theory. 
But an arbitrary quantum mechanical system will not work.
For a quantum mechanical system to lead to a Wilson loop, 
the partition function of this system on a circle, as a function of the gauge field $A$, 
must be the trace of the holonomy of $A$ in a chosen representation.  

Witten \cite{WitCS} suggested a way to do this for an arbitrary representation of a compact Lie group: 
consider a particle moving on a coadjoint orbit of the group $G$ and couple the gauge field via the action of $G$ on the orbit. 
This theory can be treated using geometric quantization, 
and hence the Borel--Weil--Bott theorem allows one to realize any desired representation.
(A lucid explanation of this idea can be found in~\cite{BeaCS}.)\footnote{One could also construct this quantum mechanical system using the methods of \cite{GG,GLL},
which would produce a 1-dimensional BV theory and hence a factorization algebra on the knot.
It would be interesting for someone to do this.}

In Section~\ref{sec: ff} we will study a simpler, fermionic system, which will yield a special class of Wilson lines.  
We will choose a representation $V$ of $G$, and introduce on a line $K: \RR \hookrightarrow \RR^3$, a pair of fermions
\[
	\psi \in \Omega^0(\RR, V) \quad\text{and}\quad
	\psib \in \Omega^0(\RR,V^\ast)
\]
with action functional
\[
S_{coup} = \int_K \langle \psib, (\d+K^*A) \psi \rangle_V
\]
where $A$ is the gauge field of Chern--Simons theory.\footnote{Hence we now work with $\ZZ/2$-graded cochain complexes, or super cochain complexes, throughout the paper. This extra grading is quite inert in our constructions, so there are no super subtleties to worry about.}

The algebra of operators of this free fermion system is $\Cl(V \oplus V^*)$, 
the Clifford algebra determined by $V \oplus V^*$ using the evaluation pairing.
Note that we remember the natural $\ZZ/2$-grading.

The partition function of this fermionic system on $S^1$, in the presence of a background gauge field $A$ on $S^1$, is the trace of the holonomy of $A$ in the spinor representation $\SS_V = \Lambda^\ast V$.  
Therefore this defect realizes the Wilson loop in this representation.

We now discuss an important refinement by allowing the 1-manifold to have boundary. 
Our defects are now not just lines or circles, but half-lines and line segments.

Before coupling to the ambient gauge theory,
note that this fermionic system has a natural boundary condition, where on the boundary we set $\psi = 0$.  
The observables of the fermionic system in the presence of this boundary condition give a constructible factorization algebra on the half-line $\RR_{\ge 0}$.  
On an interval $(a,b)$ the value of the factorization algebra is the Clifford algebra $\Cl(V \oplus V^*)$. 
But on an interval $[0,\epsilon)$ containing the boundary, this factorization algebra has value $\Lambda^\ast V$,
since our boundary conditions implies that we are only allowed observables on the $\psib$ field.  
The structure maps of a locally constant factorization algebra on the half-line encode an associative algebra (in this case $\Cl(V \oplus V^*)$) together with a module (in this case~$\Lambda^\ast V$).   

Now suppose we couple this fermionic system on the half-line to the ambient gauge theory. 
If we quantize only the fermion and leave the ambient gauge theory as a classical theory---what we 
will call the half-quantized coupled system---then we obtain a constructible factorization algebra that is given by $\C^*(\g)$ on any ball disjoint from the defect, is given by 
\begin{align*}
\C^*(\g,\Cl(V \oplus V^\vee)) &\cong \C^*(\g,\End_\CC(\SS_V)) 
\end{align*}
on any open ball intersecting the half-line but not its boundary, and is given by
\begin{align*}
\C^*(\g,\SS_V) 
\end{align*}
on any ball containing the endpoint of the half-line.
(These claims are the content of Proposition~\ref{prp: main ff}.)

When we quantize the gauge theory as well, 
the observables again form a constructible factorization algebra.
In this case, in the bulk away from the half-line, the observables agree with $\cA^\lambda$.
Around the boundary of the half-line, we have a deformation of $\C^*(\g,\SS_V) $ to a $\cA^\lambda$-module,
and in a ball that intersects the half-line but not its endpoint, 
we have the algebra of endomorphisms over $\cA^\lambda$ of this module.
(These claims are the content of Proposition~\ref{prp: defect main}.)

The partition function of this fully-quantized coupled system thus determines a quantization of the Wilson loop observable.
In this way we have identified the expected value of a Wilson loop operator with the knot invariant arising from a quantum group.
Indeed, this identification by fermions exhibits the canonical bijection between quantizations of Chern--Simons theories and ribbon deformations of~$\Rep_\fin(\g)$.

\begin{rmk}
There is another reason we take this approach to constructing the Wilson line observables.
A different approach would be simply to write down a quantum observable whose associated classical observable is the trace of holonomy.
That is, trace of holonomy is a cocycle in the cochain complex of classical BV observables,
and one picks a cocycle in the cochain complex of quantum observables whose reduction modulo $\hbar$ is trace of holonomy.
In fact, one can interpret \cite{AltFreVCS} as providing such a cocycle.
However, this approach has a drawback:
any classical cocycle $J^\cl$ admits many such lifts, 
since one lift $J$ can be modified to $J + \hbar J'$, 
where $J'$ is some other quantum cocycle.
Our approach using the coupled theory evades this drawback,
since we show in Section \ref{subsec: def complex for defects} 
that any quantization is unique if $\g$ is semisimple.
In other words, a quantization of the coupled system determines a canonical lift.
\end{rmk}

\begin{rmk}
We remark here on a nice feature of this system.
The Feynman diagrams for the coupled system can be identified with the theory of configuration space integrals developed by Bar-Natan, Bott--Taubes, Thurston, Altsch\"uler--Freidel, and many others \cite{BarVI,BotTau,Thu,AltFreVCS}.
Our results thus provide a setting wherein such computations organize coherently to relate to quantum groups.
\end{rmk}

\subsection{Modules for an $\cE_3$-algebra and knot invariants}

We just described how constructible factorization algebras connect with the physical notion of a defect, 
in the sense of a field theory living on a submanifold and coupled to a field theory living on the ambient manifold.
This relationship can be pushed further:
for a topological field theory on $\RR^n$ with factorization algebra $\cA$ (i.e., an $\cE_n$-algebra),
we identify a category of constructible factorization algebras, supported on 1-manifolds with boundary, that captures the category of topological line defects as articulated by physicists (see~\cite{KapICM, FucSch3d, FucSchTFT, KapWT} as a starting place).
Moreover, our formalism provides a systematic explanation for how perturbative topological field theories yield functorial knot invariants,
by directly connecting these factorization algebras (encoding algebras of operators) to functorial field theories (notably Reshetikhin--Turaev tangle invariants).

Let us outline our axiomatic notion of line defects and how they produce tangle invariants.

There is a powerful framework that axiomatizes the properties that topological line defects should possess,
known as the Tangle Hypothesis \cite{BaeDol, LurTFT}.
Roughly speaking it goes as follows. 
Suppose one wanted a field theory in the style of Atiyah--Segal for a particle moving in $n$-dimensional space. 
Thus, there is a bordism category, consisting here of 0-dimensional manifolds embedded in $\RR^{n-1}$ (playing the role of a spacelike hypersurface) as the objects and of 1-dimensional manifolds embedded in $\RR^{n-1} \times [0,1]$ (playing the role of a slab of spacetime) as the morphisms.
We call this the category of {\em tangles}.
There is a natural monoidal structure given by disjoint union,
but it is not symmetric monoidal---in contrast to the usual Atiyah--Segal setting---but $\cE_{n-1}$-monoidal,
precisely because we are interested in configurations of points in $\RR^{n-1}$.
A tangle field theory is then an $\cE_{n-1}$-monoidal functor into some other $\cE_{n-1}$-monoidal category~$(\cC,\otimes)$.

The tangle category captures vividly the visual idea of line defects,
while the functor encodes a kind of quantum mechanical system living along these 1-manifolds.

To the reader familiar with the Atiyah--Segal axioms, 
it likely comes as no surprise that a tangle field theory is fully determined by what it assigns a point (or a point with the relevant geometric structure, such as an orientation).
If the target is $(\cC,\otimes)$, then a tangle field theory assigns to the point a dualizable object in $\cC$.
Conversely, every dualizable object in $\cC$ determines a tangle field theory.
This characterization of $\cC$-valued tangle field theories is known as the Tangle Hypothesis,
and it is proven for such 1-dimensional defects in~\cite{AF.tangle}.

In the case $n = 3$ and $(\cC,\otimes)$ is a braided monoidal category,
this construction is manifestly that developed by Reshetikhin and Turaev \cite{RT},
and so we denote the tangle TFT of a dualizable object $V$ by~$\RT_V$.

Let us now connect this idea to our situation.
Recall that a perturbative topological field theory on $\RR^3$ determines an $\cE_3$-algebra $\cA$,
and that left $\cA$-modules $\LMod_\cA$ form a $\cE_{2}$-monoidal category.
By the Tangle Hypothesis, we thus recognize that the tangle field theories with values in $\LMod_\cA$ are given by the dualizable modules, also known as the {\em perfect} modules. 
In short, $\Perf_{\cA}$ captures precisely the functorial topological line defects for the perturbative 3-dimensional field theory,
and for each $V \in \Perf_{\cA}$, the functor $\RT_V$ determines a class of functorial tangle invariants.

A motivating idea for this paper is that every such tangle TFT $\RT_V$ admits an explicit construction by factorization homology.
The basic idea is simple. 
For such $V$, its endomorphisms $\End_\cA(V)$ is an $\cE_1$-algebra in $\cA$-modules, and hence it determines a factorization algebra $\cA_V$ on any framed $3$-manifold decorated with embedded framed $1$-manifolds. 
In the complement of a tangle $T \subset [0,1] \times \RR^{n-1}$, 
$\cA_V$ assigns $\cA$ to each open ball;
on an open ball intersecting the tangle, $\cA_V$ assigns~$\End_\cA(V)$.
This assignment (and the natural structure maps) determines a factorization algebra.
To a connected 1-manifold with boundary, it assigns $V$ to any ball containing just the positively-oriented boundary,
and it assigns the dual $V^\vee$ to any ball containing just the negatively-oriented boundary.
A careful development of this idea, 
using the beta factorization homology appropriate for coefficients in categories, is in~\cite{AF.tangle.fact}.

Our second main result is then the following,
which bridges functorial field theory and factorization homology.

\begin{thm}
\label{second main theorem}
Let $\cA$ be an $\cE_3$-algebra and let $V$ be a perfect left $\cA$-module.
The Reshetikhin--Turaev link invariants of the tangle TFT associated to $V$ can be computed using factorization homology.
\end{thm}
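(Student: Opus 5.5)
We begin by making both sides of the statement precise for a framed link $K\subset\RR^3$. On the functorial side, $\LMod_\cA$ is $\cE_2$-monoidal by Lurie's results recalled in Section~\ref{sec: connect with cats}. The full subcategory $\Perf_\cA$ of dualizable (perfect) modules is therefore a rigid braided monoidal $\infty$-category. The Tangle Hypothesis of \cite{AF.tangle} turns $V\in\Perf_\cA$ into an $\cE_2$-monoidal functor $\RT_V$ from framed tangles in $\RR^2\times[0,1]$ to $\LMod_\cA$. A link is a tangle $\emptyset\to\emptyset$, so $\RT_V(K)$ is an endomorphism of the unit $\cA$, namely an element of $\End_\cA(\cA)\simeq \cA$. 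Taking homotopy groups, and passing to $\pi_0$ in the filtered setting, gives the RT invariant $Z_V(K)$.

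On the factorization side, we form the constructible factorization algebra $\cA_V$ on the stratified manifold $K\subset\RR^3$. It assigns $\cA$ to balls avoiding $K$ and $\End_\cA(V)$, viewed as an $\cE_1$-algebra in $\cA$-modules, to balls meeting $K$ in an interval. The plan is to prove that $\int_{K\subset\RR^3}\cA_V$ together with its trace map to $\int_{\RR^3}\cA\simeq\cA$ recovers $\RT_V(K)$.

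The argument has three steps.

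First, we cut $\RR^3$ by a height function $\RR^3\to\RR$ that is generic (Morse) on $K$. This presents $K$ as a composite of elementary tangles: identity strands, crossings, caps and cups. Using the $\otimes$-excision property of factorization homology for constructible algebras, the pushforward of $\cA_V$ along the height function is a constructible factorization algebra on $\RR$. Its value on a slab $\RR^2\times(a,b)$ that meets $K$ in $k$ vertical strands is $\End_\cA(V^{\otimes k})$ (with $V$ or $V^\vee$ according to orientation). Its value on a slab containing a critical point is a bimodule. The key local computation is that the slab algebra is Morita equivalent to $\cA$ exactly because $V^{\otimes k}$ is perfect and generates. This is the Morita-invariance mentioned in the introduction; it identifies $\int_{\RR^2\times(a,b)}\cA_V$ with $\End_\cA(V^{\otimes k})$ as an $\cE_1$-algebra, rather than with $\cA$ itself, in a manner compatible with the module $V^{\otimes k}$.

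Second, we identify the elementary pieces. A slab containing a crossing gives the $\End(V^{\otimes k})$-bimodule twisted by the braiding $\sigma_{V,V}$ of $\LMod_\cA$. This holds because the braiding in $\LMod_\cA$ is itself defined by the $S^1$-family of embeddings of disks in $\RR^2$, which is exactly the isotopy realized by the crossing. A slab containing a cap or cup gives the bimodule $V\otimes V^\vee$ or its dual, with structure maps given by evaluation and coevaluation; the framing fixes the twist. In each case the identification reduces to unwinding the definition of the $\cE_2$-structure on $\LMod_\cA$ by little disks.

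Third, we assemble. Factorization homology over $\RR$ of a stratified algebra is an iterated relative tensor product of these bimodules. Under the Morita equivalences, a bimodule between $\End(V^{\otimes k})$ and $\End(V^{\otimes l})$ corresponds to a morphism $V^{\otimes k}\to V^{\otimes l}$ in $\LMod_\cA$, and relative tensor product corresponds to composition. The total therefore yields the composite $\RT_V(K)\colon\cA\to\cA$. The trace $\tr(V)$ is exactly this identification at the top and bottom, where $k=0$. Isotopy invariance on both sides is automatic, so independence of the chosen height function needs no separate check.

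I expect the main obstacle to be the first step: making the Morita-invariance precise as a coherent equivalence of constructible factorization algebras, and not merely as a slab-by-slab statement. The excision identifications must be compatible with the braiding data coming from the $\cE_2$-structure. I would try to handle this by working in the $(\infty,2)$-category of $\cE_1$-algebras, bimodules and maps, using the beta factorization homology of \cite{AF.tangle.fact} with coefficients in $\Perf_\cA$. Factorization homology with categorical coefficients over the stratified manifold would then directly give the tangle functor of \cite{AF.tangle}. One would finally compare, via $V\mapsto\End_\cA(V)$, with the algebra-valued factorization homology of $\cA_V$.
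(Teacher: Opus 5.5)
Your slicing argument relies on a claim that fails under the stated hypotheses. In your first step you assert that the slab algebra $\End_\cA(V^{\otimes k})$ is Morita equivalent to $\cA$ ``because $V^{\otimes k}$ is perfect and generates.'' Your third step then uses these equivalences to identify the iterated relative tensor product computing $\int_{K\subset\RR^3}\cA_V$ with $\End_\cA(\cA)\simeq\cA$. But the theorem assumes only that $V$ is perfect, and perfect modules need not generate. For example, $V=0$ is perfect; then $\End_\cA(V)=0$ and $\int_{K\subset\RR^3}\cA_V\simeq 0\not\simeq\cA$. Likewise $V=e\cA$, for $e$ a nontrivial idempotent of a commutative $\cA$, is perfect but does not generate. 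Identifying the slab values with $\End_\cA(V^{\otimes k})$ needs only dualizability; the problem is the further passage to $\cA$.

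Relatedly, a bimodule does not ``correspond to a morphism.'' The slab bimodule for any tangle from $k$ to $l$ points is $\Hom_\cA(V^{\otimes k},V^{\otimes l})$, which depends only on the endpoints. The tangle itself is recorded by the image of the factorization unit in that bimodule. What the argument actually needs is a comparison map, not an equivalence: composition
$\Hom_\cA(V^{\otimes k},V^{\otimes l})\otimes_{\End_\cA(V^{\otimes l})}\Hom_\cA(V^{\otimes l},V^{\otimes m})\to\Hom_\cA(V^{\otimes k},V^{\otimes m})$, with the units tracked through it. This map exists with no generation hypothesis. It is also what the paper does globally. There, $-\otimes_R V\colon\Perf_R\to\Perf_\cA$ is only a one-way $(n,1)$-Morita morphism (Lemma~\ref{lmm: perfect is n1 morita}). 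Via functoriality of Hochschild homology and $\otimes$-excision, this yields a map ${\sf m}_V\colon\int_{K\subset M}(\cA,R)\to\int_M\cA$ for closed $K$, sending the unit to $\int_{K\subset M}\tr(V)$. It is never claimed to be an equivalence.

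The coherence problem you flag at the end is the real content. Once you invoke \cite{AF.tangle.fact} as you propose, the height function, the slabs and the elementary-tangle analysis become unnecessary. The paper's argument runs as follows.
\begin{itemize}
\item The factorization-homology side is $\int_{K\subset\RR^3}\tr(V)$: the image of $\tr(V)\in{\rm HH}_*(\cA)^{\otimes\pi_0K}\simeq\int_{K\times\RR^2}\cA$ under the map induced by $K\times\RR^2\hookrightarrow\RR^3$.
\item The Tangle Hypothesis gives $Z_V$, which agrees with the RT functor by essential uniqueness.
\item \cite{AF.tangle.fact} expresses $Z_V(K)$ as the image of $V$ under ${\rm obj}(\Perf_\cA)\to\int_{K\times\RR^2}\Perf_\cA\to\int_{\RR^3}\Perf_\cA\simeq\cA$.
\item The natural transformation $\int\cA\to\int\Perf_\cA$, an equivalence on $S^1\times\RR^2$ and on $\RR^3$, makes the comparison square commute.
\item The beta unit map sends $V$ to the image of $1\in\int_{S^1}\End_\cA(V)$, that is, to $\tr(V)$ (Theorem~\ref{betaclaims}). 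This closes the triangle.
\end{itemize}
So your last paragraph is essentially the entire proof, provided the comparison is made against the unstratified $\int_{K\times\RR^2}\cA$ rather than against $\cA_V$. The stratified algebra $\cA_V$ enters only afterwards, through ${\sf m}_V$, and no generation hypothesis ever arises.
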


This result allows us to focus on the $\cE_3$-algebras that we produce from Chern--Simons theory.
This theorem says that for a finite-dimensional representation $V$ of $\g$,
each filtered $\cE_3$-algebra deformation $\cA^\lambda$ of $\C^*(\g)$ determines
a Reshetikhin--Turaev tangle TFT $\cA^\lambda_V$.
Similarly, for a finite-dimensional representation $V$ of $\g$, 
each ribbon deformation $\lambda'$ of $\Rep_\fin(\g)$ determines such a tangle TFT.
Hence to identify such deformations, we can identify the tangle TFTs,
i.e., look for choices of $\lambda$ and $\lambda'$ that give the same tangle invariants.
More specifically, we compare link invariants.
In this way we can match the usual quantum link invariants with the perturbative invariants.

In particular, this result implies that we should try to produce such a constructible factorization algebra for each quantization of Chern--Simons theory in some natural way.
Thankfully, as discussed already in Section~\ref{sec: wilson loop and fermions}, this problem is amenable to an approach from physics---the construction of line defects by field theories---that produces the desired constructible factorization algebra by the machinery of~\cite{CG1,CG2}.

We emphasize that this second main theorem of ours gives a mechanism for explicitly matching parameters between the BV quantizations and the quantum group deformations.
Thanks to the first main theorem of the paper, we know that it will be possible to produce an explicit bijection between the deformation problems,
but this second theorem shows how:
one picks a tangle and a representation $V$ and then computes the tangle invariants order by order in $\hbar$.
The first and second theorems are independent of one another, 
but together they show how Chern--Simons theory leads to functorial tangle invariants,
with factorization homology as the pathway.

\begin{rmk}
A few physical comments may clarify our assertions.
First, note that we allow our line defects to terminate, i.e., to be snipped at a point.
By allowing an endpoint, we must specify a 0-dimensional defect between the physics living along the line and the physics of the ambient space.
For this reason, we must specify a bimodule for the ambient $\cE_3$-algebra and for the $\cE_1$-algebra living along the line.
Second, the dualizability of this bimodule arises because we want to be able to twist and bend this line defect freely.
This freedom to manipulate the defect, without changing its dynamics, is encoded in the standard, categorical arguments that assure the existence of a dual.
Finally, we note that in our setting, there is not a clear distinction between ``order'' and ``disorder'' defects.
In this example of Chern--Simons theory, it is clear how to interpret our constructions as Wilson-type operators.
But in the explicit fermionic construction, we will see---even at the classical level---that the line defect acts as a source for charge and changes the equations of motion by introducing a singularity along the line.
\end{rmk}

\subsection{Back to knot polynomials}

We now return to the opening of the paper:
in terms of our discussion so far,
how does a rigorous formulation of perturbative Chern--Simons theory encode, for instance, the Jones polynomial?

Let $\g$ be a semisimple Lie algebra.
The case $\g = \mathfrak{sl}_2$ leads to the Jones polynomial proper,
while the HOMFLYPT (or Jones--Conway) polynomial appears by organizing the series $\mathfrak{sl}_n$ of Lie algebras intelligently.

Our first step is by Theorem~\ref{first main theorem}.
Each quantized Chern--Simons theory determines a quantum group.
Hence, among the quantizations appears the Drinfeld--Jimbo quantum group $\U_q \g$
when $q$ is set to $\exp(\hbar)$ in a suitable manner.
We will denote this special case by $\U_\hbar \g$.
Up to a nonlinear reparametrization of the formal disk in which $\hbar$ lives,
every quantum group is equivalent to~$\U_\hbar \g$.

Each quantization produces a ribbon category that is equivalent to $\Rep(\U_\hbar \g)$,
the finite-rank and topological-free representations of the Drinfeld--Jimbo quantum group.
Hence the line operators, particularly the Wilson loops, of quantized Chern--Simons theory
are labeled by such representations.

Our second step is by Theorem~\ref{second main theorem}.
Each representation $V \in \Rep(\U_\hbar \g)$ determines framed link invariants .
The theorem says these invariants can be computed in two distinct ways:
\begin{itemize}
\item by using a functor $\RT_V$ from the category of framed tangles in $\RR^3$ to $\Rep(\U_\hbar \g)$, in the Reshetikhin--Turaev-style \cite{RT,KRT}, or
\item by using factorization homology, as developed in Section~\ref{sec: tangles}.
\end{itemize}
As we show in Sections~\ref{sec: ff} and \ref{sec: coupled}, these factorization homology computations agree with physical computations of the Wilson operator when $V$ can be realized explicitly as a defect theory.

In this sense we have shown that the quantum field theoretic approach is mirrored by the quantum group approach.
Theorem~\ref{second main theorem} assures us that there is agreement between two mathematical formalizations of Witten's manipulations of the path integral (by cutting up 3-space with an embedded link into components).
Theorem~\ref{first main theorem} tells us how to match the particular examples of these machines ($\cE_3$-deformation of $\C^*(\g)$ versus a quantum group deformation of $\U\g$) 
arising from the case of Chern--Simons theory.

\begin{rmk}
The $\cE_3$-algebra needed for the factorization computation can be obtained by taking the (derived internal) endomorphisms of the unit $\mathbf{1}$ for the braided monoidal category $\Rep(\U_\hbar \g)$.
The underlying $\cE_1$-algebra is simply the Lie algebra cochains $\C^*(\g[\![\hbar]\!])$ since the underlying category of $\Rep(\U_\hbar \g)$ is just finite-rank and topological-free modules over $\U(\g[\![\hbar]\!])$,
but there is a nontrivial $\cE_3$-algebra structure arising from the braided monoidal product.
(There is an isomorphism of algebras between $\U(\g[\![\hbar]\!])$ and $\U_\hbar \g$,
but they are not isomorphic as quasi-triangular quasi-Hopf algebras.)
With this purely algebraic presentation of the $\cE_3$-algebra,
one could use factorization homology to compute the usual functorial framed tangle invariants
\end{rmk}

Now that we have recovered the standard functorial invariants of framed tangles,
we can invoke classic results that extract the knot polynomials from the functorial situation. 
These are not totally straightforward: 
there are some subtleties in connecting the $\U_q\mathfrak{sl}_2$ invariants to the Jones polynomial.
Let $K(L) \in \ZZ[t^{1/2},t^{-1/2}]$ denote the Kauffman bracket of a framed link $L$ in $\RR^3$.
The Jones polynomial $J([L]) \in \ZZ[t^{1/2},t^{-1/2}]$ is an invariant of the underlying oriented link $[L]$ given by
\[
J([L]) = (-t^{3/2})^{w(L)}, 
\]
where $w(L)$ is the writhe of $L$.
(This is Kauffman's approach to the Jones polynomial, not the original construction.)
There is also an invariant of this framed link $L$, arising from the fundamental representation of $\U_q\mathfrak{sl}_2$, 
which we will denote $F(L)$.
It is an element of $\ZZ[q^{1/2},q^{-1/2}]$.
If we identify $t$ and $q$,
then $F(L)$ agrees with the Kauffman bracket $K(L)$ up to a sign determined by the writhe of the link and its number of components.
(A clear and thorough discussion can found in~\cite{Tin},
who cites~\cite{Oht}.)

It might seem surprising that the perturbative quantization of Chern--Simons theory,
which only takes the path integral in a formal neighborhood of the trivial flat connection,
can produce rich invariants such as the Jones polynomial.
On the other hand, we have just learned that the Jones polynomial of an oriented link $V_L$ is an element of the ring $\ZZ[q^{1/2},q^{-1/2}]$, i.e., a Laurent polynomial in the variable~$\sqrt{q}$.
Analytic continuation thus determines this function from its Taylor expansion around the point $q=1$.
Hence, substituting $q = e^\hbar$, one obtains a formal power series in $\hbar$.
Each coefficient can be identified as a Vassiliev invariant \cite{KRT, KasTurChord}, 
and the theory of the Kontsevich integral assures us that it can be computed perturbatively.

\begin{rmk}
Expositions of Chern--Simons theory via quantum groups often emphasize the necessity of working at a root of unity,
which is quite far from what we do here.
Here we work with quantum groups with a formal parameter $\hbar$,
and we claim it encodes a part of Chern--Simons theory: the perturbative part.
These assertions are not in tension but are, in fact, compatible. To get a 3-dimensional topological field theory, in the sense of Atiyah and Segal,
but extended down to 1-manifolds,
one needs to start with a {\em modular} tensor category \cite{BakKir, BDSPV}.
This kind of category is a very special type of ribbon category,
exhibiting much stronger finiteness properties.
These arise from quantum groups by a rather sophisticated method,
as a subquotient of the category of finite-dimensional representations of $\U_q \g$ at a root of unity.
(One must also be thoughtful about how one constructs the quantum group at a root of unity!)

On the other hand, the knot invariants arise from the Drinfeld--Jimbo group $\U_q\g$ where $q$ is rational parameter.
This $q$ is a variable in the knot polynomial.
Specializing to roots of unity evaluates the polynomials and hence produces a number,
which is less informative as a knot invariant.

The connection between these situations is that every closed, oriented 3-manifold can be presented by surgery of $S^3$ along a framed link.
The same manifold admits many different presentations.
There is thus an equivalence relations on framed links in $S^3$ whose equivalence classes label oriented homeomorphism types of oriented closed 3-manifolds \cite{Kirby, FenRou}.
Hence, if we want to use the functorial framed link invariants to produce invariants of oriented closed 3-manifolds,
we need the invariants to be constant on each equivalence class.
Roughly speaking, the modular tensor categories arising from quantum groups arise by implementing these relations.

The perturbative theory allows one to construct the knot invariants by studying line defects perturbatively.
These defects yield the $\hbar$-expansion of the knot polynomials,
which thus determine the polynomials in terms of $q$.
To obtain a path integral invariant of a 3-manifold, however, one must integrate over the full space of connections,
not just near the trivial connection,
and this requires the level to be integral, 
which translates into $q$ becoming a root of unity.
\end{rmk}

\subsection{Section by section outline}

We begin by developing the language necessary to realize the argument outlined in the introduction 
and then turn to constructions in field theory itself.
Hence Section \ref{sec: tangles} provides the necessary formalism of constructible factorization algebras and exhibits the relationship with Reshetikhin--Turaev tangle TFTs.
It ends by reviewing the example that we will deform, i.e., tangle TFTs from~$\Rep_\fin(\g)$.
Section \ref{sec: CS} reviews the perturbative quantization of Chern--Simons theory (without defects) and examines its factorization algebra.
Section \ref{sec: ff} constructs the 1-dimensional field theory, the charged fermion, whose factorization algebra will lead to the tangle TFT that we explicitly exhibit.
Section \ref{sec: coupled} finally builds the desired constructible factorization algebra by quantizing Chern--Simons theory coupled to a fermion.
In the appendix we review results about the Feynman diagrammatics of Chern--Simons theory.
Each section begins with an overview, so more detailed information about what is accomplished in a given section can be found there.

\subsection{Acknowledgements}

This project has gestated for over a decade and hence has received support and encouragement from many people and institutions.

J.F. thanks David Ayala and Hiro Lee Tanaka for their collaboration in~\cite{AFT2} and~\cite{AF.tangle}, on which this work relies.
O.G. benefitted from feedback on this work from the audiences of a lectures series at the Hausdorff Institute for Mathematics in fall 2017 and at Northeastern University in fall 2018.
He thanks Sachin Gautam, David Jordan, Pavel Safronov, Valerio Toledano-Laredo, Peter Teichner, and Brian Williams for illuminating conversations and probing questions as these ideas developed.
The Max Planck Institute for Mathematics provided a wonderful working environment and financial support for O.G. during his primary work on this project in 2017--18.
J.F. and O.G. had a chance to re-engage with this project during the MSRI program ``Higher Categories and Categorification'' in spring 2020, and at UNAM in Cuernavaca, Mexico in summer 2022.

\section{Links and factorization homology}
\label{sec: tangles}

We recall the notion of a framed $(d\subset n)$-manifold from Example~5.2.12 of~\cite{AFT1}, and studied in \S4.3 of~\cite{AFT2}. 
A {\it framing} of a properly-embedded submanifold $K^d\subset M^n$ consists of a framing of $M$ 
(i.e., a bundle isomorphism $T_M\cong \epsilon^n_M$ between the tangent bundle and trivial rank $n$ bundle)
together with one of the following equivalent structures:
\begin{enumerate}
\item a framing of $K$ and a framed embedding $K\times\RR^{n-d}\hookrightarrow M$ extending the given embedding of~$K$;

\item a framing of $K$ and a framing of the normal bundle ${\rm Norm}_{K\subset M}$ compatibly with the framing of~$M$;

\item a nullhomotopy of the Gauss map $K\ra {\rm Gr}_d(\RR^n)$ defined by the embedding $K\subset M$ and the framing of~$M$.

\end{enumerate}
By a {\it framed link} $K\subset M$ we will mean denote a framed $(1\subset n)$-manifold in the above sense, where~$d=1$.

\begin{rmk}
Note that being properly-embedded includes submanifolds like $\RR\times\{0\}\subset \RR \times\RR^{n-1}$ but excludes submanifolds such as $(0,1)\times\{0\}\subset\RR\times\RR^{n-1}$.
\end{rmk}

We form $\oo$-categories of framed $n$-manifolds (see~\cite{AF}) and, later, framed links (see~\cite{AFT1} and\cite{AFT2}).

\begin{dfn}
The $\infty$-category $\mfld_n^{\fr}$ is the limit
\[
\xymatrix{
\mfld_n^{\fr}\ar[r]\ar[d]&\ar[d]\Spaces_{/{\rm V}_n(\RR^\oo)}\\
\Mfd_n\ar[r]&\Spaces_{/{\rm Gr}_n(\RR^\oo)}~.}
\]
\end{dfn}
In particular, an object of $\Mfld_n^\fr$ is an $n$-manifold together with a trivialization of its tangent bundle, which is equivalent to a lift of the tangent classifier $M\to {\rm Gr}_n(\RR^\oo)$ to the infinite Stiefel manifold ${\rm V}_n(\RR^\oo)$. A framed embedding $M\hookrightarrow N$ consists of an embedding together with a homotopy between the two trivializations of $TM$, from the framing of $M$ and the pulledback framing from $N$.

For the remainder of this section, we fix the following:
\begin{itemize}

\item $A$ is an $\cE_n$-algebra in an symmetric monoidal $\oo$-category $\cV$, with $n>1$.

\item $V$ is a perfect $A$-module, i.e., an $A$-module for which which there exists an $A$-linear perfect pairing
\[
V \otimes_A V^\vee \longrightarrow A
\]
where $V^\vee = \Hom_A(V,A)$ is the $A$-linear dual of~$V$.

\item $M$ is a framed $n$-manifold, and $K\subset M$ is a framed link.

\end{itemize}
We introduce the following fundamental invariant using factorization homology
    \[
    \int A: \Mfld_n^{\fr}\longrightarrow \cV
    \]
with coefficients in~$A$.

\begin{dfn}
Let $A$, $V$, and $K\subset M$ be as above. 
The {\em trace of $V$ over $K$}
\[
\displaystyle\int_{K\subset M}\tr(V) \in \displaystyle\int_M A
\]
is the image of the element
\[
\tr(V)\in {\rm HH}_\ast(A)^{\otimes \pi_0 K} \simeq \displaystyle\int_{K\times\RR^{n-1}}A
\]
under the map
\[
\displaystyle\int_{K\times\RR^{n-1}}A \longrightarrow \displaystyle\int_{M}A    
\]
given by applying factorization homology $\int_{-} A$ to the framed embedding $K\times\RR^{n-1}\hookrightarrow M$.
\end{dfn}

The main theorem of this section identifies a special case of this trace invariant with the Reshetihkin--Turaev invariant. 
Our proof uses aspects of a beta form of factorization homology, 
in development in work with David Ayala~\cite{AF.tangle.fact}. 
This theory generalizes factorization homology of $\cE_n$-algebras to rigid $\cE_{n-1}$-monoidal $\oo$-categories, 
in the same way that the Hochschild homology of algebras can be generalized to categories. 
In particular, for a $\cV$-linear $\cE_{n-1}$-monoidal $\oo$-category~$\cC$, 
there is a functor
\[
\int\cC: \Mfld_n^{\fr}\longrightarrow \cV
\]
assigning to a framed $n$-manifold $M$ the beta factorization homology $\int_M \cC$, an object of~$\cV$.
Facts about beta factorization homology used in our arguments are collected in Theorem~\ref{betaclaims}. 

We now turn to the main theorem of this section.

\begin{thm}
    Given a choice of invariant pairing on a semisimple Lie algebra $\g$, there exists a filtered $\cE_3$-algebra $\C_\hbar^\ast(\g)$ deformation of Lie algebra cochains $\C^\ast(\g)$. A finite-dimensional representation $V$ of the Drinfeld--Jimbo quantum group $\U_\hbar\g$ defines a perfect module for $\C_\hbar^\ast(\g)$, and the resulting factorization homology link invariant is equal to the Reshetikhin--Turaev invariant
    \[
    \int_{K\subset \RR^3} \tr(V)= Z_V(K\subset\RR^3)
    \]
    under the identification $\H^0_\hbar(\g)\cong\CC[\![\hbar]\!]$.
\end{thm}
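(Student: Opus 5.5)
The plan is to build the $\cE_3$-algebra directly from the braided monoidal category and then reduce the trace invariant to the Reshetikhin--Turaev functor by means of beta factorization homology. Let $\cC = \Rep_\fin(\U_\hbar\g)$ be the rigid ribbon category of finite-rank topologically free representations; these are the dualizable objects of a $\CC[\![\hbar]\!]$-linear $\cE_2$-monoidal $\infty$-category $\cC^{\dg}$. Define $\C^\ast_\hbar(\g) := \End_{\cC^{\dg}}(\mathbf{1})$, the derived endomorphisms of the unit. By Dunn additivity (the unit's endomorphisms in an $\cE_k$-monoidal category form an $\cE_{k+1}$-algebra) this is an $\cE_3$-algebra. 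Its underlying $\cE_1$-algebra is $\C^\ast(\g[\![\hbar]\!])$, because the underlying category is that of $\U(\g)[\![\hbar]\!]$-modules (Drinfeld's algebra isomorphism $\U_\hbar\g\cong \U(\g)[\![\hbar]\!]$). Reducing mod $\hbar$ recovers the symmetric monoidal $\Rep_\fin(\g)$, so the algebra reduces to the commutative algebra $\C^\ast(\g)$. The filtration comes from $F^i\C^\ast(\g)=\Sym^{\ge i}$ combined with $\hbar^{2i}$, exactly as in the filtered Koszul duality of Corollary~\ref{filt KD}. In particular $\H^0_\hbar(\g)=\H^0\End(\mathbf{1})\cong\CC[\![\hbar]\!]$, which is the identification named in the statement.

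Next I will produce the perfect module. Consider the functor $\cC^{\dg}\to\LMod_{\C^\ast_\hbar(\g)}$ given by $W\mapsto \Hom_{\cC^{\dg}}(\mathbf{1},W)$. This is the filtered analogue of $M\mapsto\C^\ast(\g,M)$. Using Corollary~\ref{filt KD} mod $\hbar$ and completeness of the filtration, I will show that it is an $\cE_2$-monoidal equivalence onto $\Perf_{\C^\ast_\hbar(\g)}$, via a Nakayama-type lifting argument order by order in $\hbar$. Monoidality follows because the functor is corepresented by the unit. Under this equivalence $V$ maps to a dualizable, hence perfect, module, and its dual $V^\vee$ corresponds to the $\C^\ast_\hbar(\g)$-linear dual.

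The comparison itself will go through Theorem~\ref{betaclaims}. I expect to use three facts from it. First, for $\cC$ rigid, $\int_M \cC\simeq \int_M \End(\mathbf{1})$ when $\cC$ is generated appropriately by the unit (Morita invariance), here applied to the subcategory $\Perf$. Second, the trace $\tr(V)\in \int_{K\times\RR^{2}}A$ is the image of the object $V$ placed along $K$. Third, for $M=\RR^3$ and a link $K$, the pushforward $\int_{K\times \RR^2}\cC\to\int_{\RR^3}\cC\simeq \End(\mathbf{1})$ evaluates the tangle TFT determined by $V$ under the Tangle Hypothesis of \cite{AF.tangle}. To run this, I will decompose $K\subset\RR^3$ into elementary tangles (cups, caps and crossings) in a collar $\RR^2\times[0,1]$. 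By $\otimes$-excisiveness, factorization homology over the slab becomes composition in $\cC$. Cups and caps give the evaluation and coevaluation of $V$, and crossings give the braiding. The result is therefore the Reshetikhin--Turaev composite $Z_V(K)\in\End(\mathbf{1})$, whose $\H^0$ is read off in $\CC[\![\hbar]\!]$.

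The main obstacle is this last step: showing that the factorization homology trace, defined through Hochschild homology $\mathrm{HH}_\ast(A)$ of the algebra, agrees with the categorical trace computed from the braided structure, compatibly with the framing (twist) data. My first approach will be to identify $\mathrm{HH}_\ast(\End_A(V))\simeq\mathrm{HH}_\ast(A)$ by Morita invariance, sending $\mathrm{id}_V$ to $\tr(V)$. I will then check on a single unknot with arbitrary framing that the ribbon twist $\theta_V$ appears; the general case follows by functoriality of the slab decomposition.
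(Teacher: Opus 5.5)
Your architecture matches the paper's. $\C^\ast_\hbar(\g)$ is the derived endomorphisms of the unit of $\Rep^{\rm dg}_{\rm fin}(\U_\hbar\g)$ (Corollary~\ref{cor.filt.E3.q.gp}). The comparison goes through Theorem~\ref{betaclaims}, together with the description in \cite{AF.tangle.fact} of the link value as the composite ${\rm obj}(\Perf_A)\to\int_{K\times\RR^2}\Perf_A\to\int_{\RR^3}\Perf_A\simeq A$.

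You differ in two sub-steps.

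\begin{enumerate}
\item \textbf{The equivalence of perfect modules with representations.} For $\Perf_{\C^\ast_\hbar(\g)}\simeq\Rep^{\rm dg}_{\rm fin}(\U_\hbar\g)$, the paper proves a general equivalence ${\rm tDef}(A)\simeq{\rm tDef}(\Perf_A)$, with mutually inverse maps $\Perf$ and $\End(\mathbf{1})$, so the braiding is matched by construction. Your $\Hom(\mathbf{1},-)$-plus-Nakayama route also works. However, corepresentability only makes that functor lax monoidal; strongness has to come from the mod-$\hbar$ Koszul duality of Corollary~\ref{filt KD} together with completeness.

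\item \textbf{The identification with Reshetikhin--Turaev.} More substantively, the paper never cuts $K$ into cups, caps and crossings. It invokes the uniqueness in the 1-dimensional Tangle Hypothesis \cite{AF.tangle}: the framed tangle functor determined by the perfect module $V$ agrees with the Reshetikhin--Turaev functor on homotopy categories, and framing is already part of the source bordism category. What you call the main obstacle is then just the paper's commutative diagram. The left triangle is the definition of $\tr(V)$ as the image of $V$ in ${\rm HH}_\ast(\Perf_A)\simeq{\rm HH}_\ast(A)$, and the middle square is naturality of $\int A\to\int\Perf_A$. No separate twist computation is needed.
\end{enumerate}

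Your explicit route replaces that uniqueness by the generators-and-relations presentation of framed tangles. This buys visibility of where the $R$-matrix and ribbon element enter. But it requires knowing the values of beta factorization homology on elementary slabs, which is again the content of \cite{AF.tangle.fact}. Also, ``check the framed unknot, then use functoriality'' suffices only once every generator (cups, caps, crossings, twist) has been matched.

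Finally, your first ``fact'' overstates what is available. Theorem~\ref{betaclaims} gives only a natural transformation $\int A\to\int\Perf_A$ that is an equivalence on $S^1\times\RR^2$, and the proof also uses it as an equivalence on $\RR^3$. Moreover, ``generated by the unit'' holds only after passing to the associated graded: for a nontrivial irreducible $V$, the unfiltered complex $\C^\ast(\g,V)$ is acyclic. So rely only on naturality plus those two equivalences, and keep the filtration throughout.
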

\begin{proof}
Let $A$ be an $\cE_3$-algebra in~$\cV$. 
The 1-dimensional Tangle Hypothesis~\cite{AF.tangle} provides an essentially unique functor
\[
Z_V: {\rm Bord}_1^{\fr}(\RR^2) \longrightarrow \Perf_A
\]
given the choice of a perfect $A$-module in~$\cV$. 
By the essential uniqueness of this functor, it agrees with the Reshetikhin--Turaev functor at the level of homotopy categories.

By the description of the functor $Z_V$ in terms of beta factorization homology of rigid $\cE_2$-monoidal $\oo$-categories in~\cite{AF.tangle.fact}, the value on a framed link $Z_V(K\subset\RR^3)$ is given by the composite
\[
\xymatrix{
{\rm obj}(\Perf_A)\ar[r]
&\displaystyle\int_{K\times\RR^{n-1}}{\Perf_A}\ar[r]&\displaystyle\int_{\RR^n}{\Perf_A}\ar[r]^-\sim& A\\
}
\]
evaluated on the element $V\in \obj(\Perf_A)$.
We have the following commutative diagram:
\[
\xymatrix{
{\rm obj}(\Perf_A)\ar[dr]\ar[r]
&\displaystyle\int_{K\times\RR^{n-1}}{\Perf_A}\ar[r]&\displaystyle\int_{\RR^n}{\Perf_A}\ar@{=}[r]& A\\
&\displaystyle\int_{K\times\RR^{n-1}}A\ar[r]\ar[u]&\displaystyle\int_{\RR^n}A\ar[u]_\simeq\ar@{=}[r]& A\ar[u]}
\]
The commutativity in the middle square is the compatibility between enriched alpha and beta factorization homology: beta factorization homology for a category with a single object (namely $A$) is equivalent to alpha factorization homology. 
The commutativity of the left triangle is by definition of the trace.
\end{proof}

In the proof we used the following assertion about the relation of factorization homology of $\cE_n$-algebras and the beta factorization homology of rigid $\cE_{n-1}$-monoidal $\oo$-categories from~\cite{AF.tangle.fact}.

\begin{thm}\label{betaclaims}[Beta claims \cite{AF.tangle.fact}]
For $A$ an $\cE_n$-algebra in $\cV$, there is a natural transformation of functors
\[
\displaystyle\int A \longrightarrow \int \Perf_A
\]
on $\Mfld_n^{\fr}$, the $\oo$-category of framed $n$-manifolds and framed embeddings. For $M = S^1\times\RR^{n-1}$, there is a natural equivalence
\[
\int_{S^1\times\RR^{n-1}} A \simeq \int_{S^1\times\RR^{n-1}}\Perf_A \simeq \hh_\ast(\Perf_A)
\]
between the factorization homology of $S^1\times\RR^{n-1}$ with coefficients in $A$, the beta factorization homology of $S^1\times\RR^{n-1}$ with coefficients in $\Perf_A$, and the $\cV$-enriched Hochschild homology of the $\oo$-category $\Perf_A$. For each $V\in \Perf_A$, the following diagram commutes
\[
\xymatrix{
\obj(\Perf_A)\ar[r]&\displaystyle\int_{S^1\times\RR^{n-1}}\Perf_A \\
\ast\ar[r]^-{\rm unit}\ar[u]^-{\{V\}}& \displaystyle\int_{S^1}\End_A(V)\ar[u]}
\]
where $\obj(\Perf_A)\ra\int_{S^1\times\RR^{n-1}}\Perf_A$ is the canonical unit map in beta factorization homology.
\end{thm}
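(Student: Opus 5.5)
Since Theorem~\ref{betaclaims} is attributed to the beta factorization homology of~\cite{AF.tangle.fact}, I would not reprove it from scratch. Instead I would set up the beta theory and check its three assertions using the excision and $\otimes$-excision properties of both alpha and beta factorization homology. The guiding principle is that $\Perf_A$, as a $\cV$-enriched rigid $\cE_{n-1}$-monoidal $\oo$-category, contains $A$ as the endomorphisms of its unit object: by Lurie's results, $\End_{\Perf_A}(\mathbf{1}) \simeq A$ as $\cE_n$-algebras. Viewing $A$ as a one-object $\cV$-enriched $\cE_{n-1}$-monoidal category $BA$, the inclusion $BA \hookrightarrow \Perf_A$ at the unit is $\cE_{n-1}$-monoidal. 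Functoriality of beta factorization homology in enriched $\cE_{n-1}$-monoidal functors then gives the natural transformation $\int A \simeq \int BA \to \int \Perf_A$ on $\Mfld_n^\fr$. The first equivalence here is the compatibility between alpha and beta factorization homology for one-object categories.

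For the equivalence on $S^1\times\RR^{n-1}$, I would first reduce along $\RR^{n-1}$. Factorization homology over $S^1\times\RR^{n-1}$ is $\int_{S^1}$ of the underlying $\cE_1$-algebra (respectively, $\cE_1$-monoidal category, forgetting the remaining $\cE_{n-2}$-structure). By excision along a cover of $S^1$ by two intervals, this is Hochschild homology, so $\int_{S^1\times\RR^{n-1}} A \simeq \hh_\ast(A)$ and $\int_{S^1\times\RR^{n-1}}\Perf_A \simeq \hh_\ast(\Perf_A)$, the enriched Hochschild homology of the category. The comparison map is the map of Hochschild homologies induced by $BA\hookrightarrow\Perf_A$. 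It is an equivalence by Morita invariance of Hochschild homology: $A$ is a compact generator of $\Perf_A$, which is the idempotent-complete thick closure of $A$, and Hochschild homology of $\cV$-enriched categories is invariant under such Morita equivalences (via the Dennis--Waldhausen/Keller argument, with the enrichment handled by the bar construction).

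The commuting square is a naturality check. The unit map $\obj(\Perf_A)\to\hh_\ast(\Perf_A)$ sends $V$ to the class of $\mathrm{id}_V$ in the summand $\End(V)$ of the cyclic bar complex. The map $\int_{S^1}\End_A(V)\to\hh_\ast(\Perf_A)$ comes from the full subcategory on $V$, and it sends the unit, which is the class of $1\in\End_A(V)$, to the same element. Precomposing with the Morita equivalence identifies this element in $\hh_\ast(A)$ with the Dennis trace of $V$, which is the $\tr(V)$ used in the definition. The main obstacle is making this Morita step coherent while keeping the $\cE_{n-1}$-structure and naturality in $\Mfld_n^\fr$. My approach would be to prove the equivalence only on $S^1\times\RR^{n-1}$, where just the $\cE_1$-structure matters, and to state naturality of the transformation separately. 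Naturality comes for free from functoriality of beta factorization homology, so nothing more needs to be checked globally.
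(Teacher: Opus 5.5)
The paper gives no proof of this statement. It is quoted from \cite{AF.tangle.fact}, and the only justification in the text is the remark, in the proof of the main theorem of Section~\ref{sec: tangles}, that beta factorization homology of the one-object category $A$ is alpha factorization homology. Your construction of the transformation through $BA\hookrightarrow\Perf_A$ follows that line, as does your check of the square through the full subcategory on $V$; both are fine modulo the same black box.

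The step you argue yourself has a gap. You claim $\hh_\ast(A)\to\hh_\ast(\Perf_A)$ is an equivalence because $\Perf_A$ is the idempotent-complete thick closure of $A$. That premise fails in the generality of the statement. In particular it fails for the complete filtered $\cV$ of Section~\ref{sec: filtered KD}, which is where the paper applies the theorem. Take $A=\C^*(\g)$ with $\g$ semisimple and $U$ a nontrivial irreducible representation. Then $\C^*(\g,U)$ is perfect, with dual $\C^*(\g,U^*)$. It is nonzero, since $F^1\C^*(\g,U)\simeq U[-1]$. But its underlying complex $F^0\C^*(\g,U)$ is acyclic, because $\H^*(\g,U)=0$. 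The complex of maps $A\to X$ in $\LMod_A$ is $F^0X$, so $F^0$ is fully faithful, hence conservative, on the thick subcategory generated by $A$. Therefore $\C^*(\g,U)$ is not in that subcategory, and compact generation together with Keller's argument does not apply. For non-stable $\cV$, ``thick closure'' has no meaning at all.

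What you need instead is enriched Morita invariance. If $W\in\Perf_A$, then $\underline{\Hom}_A(W,-)$ is obtained by tensoring with $W^\vee$ and forgetting to $\cV$, so it preserves colimits and $\cV$-tensors. Thus $W$ is $\cV$-atomic, and $\Perf_A$ lies in the $\cV$-enriched Cauchy completion of $BA$. Enriched Hochschild homology is the trace of the identity in the Morita $(\infty,2)$-category of $\cV$-enriched categories and bimodules, so it is unchanged under Cauchy completion. This gives the equivalence on $S^1\times\RR^{n-1}$ without any generation hypothesis.

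Separately, fix the bookkeeping in your reduction. For $\Perf_A$ the circle must run in the composition direction, so you forget the entire $\cE_{n-1}$-monoidal structure and keep only the $\cV$-enriched category. Taking $\int_{S^1}$ over a retained $\cE_1$-monoidal direction would produce a category, not an object of $\cV$.
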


Our analysis of the link invariant $\int_{K\subset M}\tr(V)$ will make use of a stratified form factorization homology for the pair $K\subset M$. The coefficient systems for this form of factorization homology are given by $\cE_{1\subset n}$-algebras, which we now define. 

For the definition of the $\infty$-category $\mfld^{\fr}_{1\subset n}$, see Example 5.2.12~\cite{AFT1}. 
Heuristically, this $\oo$-category has as objects framed links $K\subset M$, 
where $M$ is a framed $n$-manifold, 
and has as the space of morphisms $\Emb^{\fr}(K\subset M, L\subset N)$, 
the limit
\[\xymatrix{
\Emb^{\fr}(K\subset M, L\subset N)\ar[d]\ar[r]&\Emb^{\fr}(M,N)\ar[d]\\
\Emb^{\fr}(K,L)\ar[r]&\Emb^{\fr}(K\times\RR^{n-1},N)\\}\] where $\Emb^{\fr}$ consists of framed open embeddings with the compact-open topology.

\begin{dfn} 
Let $\disk^{\fr}_{1\subset n}$ denote the full $\oo$-subcategory of $\mfld^{\fr}_{1\subset n}$ consisting of finite disjoint unions of copies of $\emptyset\subset \RR^n$ and $\RR\subset \RR^n$. 
An {\em $\cE_{1\subset n}$-algebra in $\cV$} is a symmetric monoidal functor
\[
\disk^{\fr}_{1\subset n}\longrightarrow \cV
\]
denoted as a pair $(A,R)$, where $A$ is the underlying $\cE_n$-algebra and $R$ is the underlying $\cE_1$-algebra.
\end{dfn}

The factorization homology
\[
\int_{K\subset M}(A,R) = \underset{\disk_{1\subset n}/K\subset M}{\rm colim} (A,R)
\]
of framed links $K\subset M$ with coefficients in $\cE_{1\subset n}$-algebras is developed in detail in \cite{AFT2}. In particular, it satisfied a version of $\otimes$-excision, for excision sites which are transverse to the link.

\begin{rmk} 
$\disk^{\fr}_{1\subset n}$ is the smallest subcategory of $\mfld^{\fr}_{1\subset n}$ with the property that any finite subset of a manifold $M$ in $\mfld^{\fr}_{1\subset n}$ has an open neighborhood that is homeomorphic to an object of $\disk^{\fr}_{1\subset n}$.
\end{rmk}

We will make essential use of the following form of Morita theory for $\cE_{1\subset n}$-algebras.

\begin{dfn}
Let $(A,R)$ and $(A,S)$ be $\cE_{1\subset n}$-algebras. 
An {\em $(A; R,S)$-bimodule $V$} is an $(R,S)$-bimodule in $\int_{S^{n-2}\times\RR^2}A$-modules.    
\end{dfn}

Such bimodules are related to functors that have the following structure.

\begin{dfn}
Let $(A,R)$ and $(A,S)$ be $\cE_{1\subset n}$-algebras. 
A {\em $(n,1)$-Morita morphism under $A$}
\[
(A,R) \to (A,S)
\]
is a functor
\[
\Perf_R \longrightarrow \Perf_S
\]
that is linear with respect to the action of $\int_{S^{n-2}\times \RR^2}A$, i.e., preserves the enrichment in $\int_{S^{n-2}\times \RR^2}A$-modules.
\end{dfn}

The following result relates the previous two definitions.

\begin{lmm}
\label{lmm: perfect is n1 morita}
If an $(A; R, S)$-bimodule $V$ is perfect as an $S$-module, 
then the relative tensor product $-\otimes_R V$ defines an $(n,1)$-Morita morphism $\Perf_R\to \Perf_S$.
\end{lmm}

\begin{proof}
An $(A; R, S)$-bimodule $V$ defines a functor $\Mod_R\ra \Mod_S$ that is manifestly $\int_{S^{n-2}\times\RR^2}A$-linear. 
If $V$ is perfect as an $S$-module, then for every perfect $R$-module $W$ the relative tensor product $W\otimes_A V$ will be perfect as an $S$-module.    
\end{proof}

This notion has the follow interplay with factorization homology.

\begin{lmm}\label{lemma.bimmodule}
Let $(A,R)$ and $(A,S)$ be $\cE_{1\subset n}$-algebras, 
and let $V$ be an $(A;R,S)$-bimodule that is perfect as an $S$-module. There is a natural map
\[
\int_{K\subset M}(A,R) \to \int_{K\subset M}(A,S)
\]
of factorization homology theories on $\Mfld_{1\subset n}^{\fr}$ after restriction to {\bf closed} 1-manifolds~$K$.
\end{lmm}

\begin{proof}
By linearity of Hochschild homology, for a $B$-linear functor $\cC\ra \cD$ the resulting map $\hh_\ast(\cC)\ra \hh_\ast(\cD)$ is a map of $B$-modules. Applying this in the case $B= \int_{S^{n-2}\times\RR^2}A$, we obtain that the induced map
\[
\int_{S^1}R \simeq {\rm HH}_\ast(R)\to {\rm HH}_\ast(S)\simeq \int_{S^1} S
\]
is a map of $\int_{S^{n-2}\times\RR^2}A$-modules.

The result follows  a composition that uses $\otimes$-excision for factorization homology of $(1\subset n)$-manifolds from~\cite{AFT2}:
\[
\xymatrix{
\displaystyle \int_{K\subset M} (A,R)
\simeq
\displaystyle \int_{K} R\underset{\displaystyle \int_{K\times\RR^{n-1}\smallsetminus  K\times\{0\}}A}\otimes \displaystyle \int_{M\smallsetminus K} A\ar[d]\\
\displaystyle {\rm HH}_\ast(\Perf_R)^{\otimes \pi_0 K}\underset{\displaystyle \int_{K\times\RR^{n-1}\smallsetminus  K\times\{0\}}A}\otimes \displaystyle \int_{M\smallsetminus K} A\ar[d]\\
\displaystyle {\rm HH}_\ast (\Perf_S)^{\otimes \pi_0K}\underset{\displaystyle \int_{K\times\RR^{n-1}\smallsetminus  K\times\{0\}}A}\otimes \displaystyle \int_{M\smallsetminus K} A\ar[d]\\
\displaystyle \int_{K} S\underset{\displaystyle \int_{K\times\RR^{n-1}\smallsetminus  K\times\{0\}}A}\otimes \displaystyle \int_{M\smallsetminus K} A\simeq \displaystyle\int_{K\subset M}(A,S)~.
}
\]
Here the diffeomorphism 
\[
K\times\RR^{n-1}\smallsetminus  K\times\{0\}
\cong
\coprod_{\pi_0 K} S^{n-2}\times\RR^2
\]
lets us invoke the first paragraph to modify the left hand side of each tensor product in the composition.
\end{proof}

\begin{thm}  
Let $M$ be a framed $n$-manifold with $K\subset M$ a framed link. 
Let $(A,R)$ be an $\cE_{1\subset n}$-algebra with $V$ an $(A;R, A)$-bimodule that is perfect as an $A$-module. 
There is a natural map ${\sf m}_V:\int_{K\subset M}(A,R)\to \int_MA$ such that the diagram 
\[
\xymatrix{
&\displaystyle\int_K R\ar[rr]\ar[dd]&&\displaystyle\int_{K\subset M}(A,R)\ar[dd]^-{{\sf m}_V}\\
\Bbbk\ar[rd]_{\tr(V)}\ar[ru]^-{\rm unit}\\
&\displaystyle\int_{K}A\ar[rr]&& \displaystyle\int_{M}A\\}
\]
commutes up to equivalence.
\end{thm}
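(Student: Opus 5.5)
The plan is to obtain ${\sf m}_V$ as a special case of Lemma~\ref{lemma.bimmodule}, taking $S = A$ regarded as an $\cE_1$-algebra along the link. The first step is to identify the $\cE_{1\subset n}$-algebra $(A,A)$. Here the $\cE_1$-algebra along $\RR\subset\RR^n$ is the restriction of $A$ along $\RR\times\RR^{n-1}$, so that $\int_{K\subset M}(A,A)\simeq\int_M A$. This equivalence holds because the stratification is invisible: every basic of $\disk^{\fr}_{1\subset n}$ is assigned the value $A$ would assign to the underlying framed disk. The colimit over $\disk_{1\subset n/K\subset M}$ should therefore agree with the colimit over $\disk_{n/M}$, by a cofinality argument for forgetting the stratification, which I would cite from~\cite{AFT2}.

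Next, I would apply Lemma~\ref{lemma.bimmodule} with $S=A$. The hypothesis that $V$ is an $(A;R,A)$-bimodule perfect as an $A$-module is exactly what that lemma requires. The composite of the lemma is then
\[
\int_{K\subset M}(A,R)\to\int_{K\subset M}(A,A)\simeq\int_M A,
\]
and I define this composite to be ${\sf m}_V$. Its middle step is the map $\hh_\ast(\Perf_R)\to\hh_\ast(\Perf_A)$ induced by $-\otimes_R V$, which is linear over $\int_{S^{n-2}\times\RR^2}A$. Naturality in $K\subset M$ comes from $\otimes$-excision, since all the maps are built componentwise over $\pi_0K$ and are tensored with the identity on $\int_{M\smallsetminus K}A$.

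For commutativity, I would first reduce to a single component $K\cong S^1$ with $M=S^1\times\RR^{n-1}$. The general case then follows by tensoring over $\pi_0K$ and pushing forward along $K\times\RR^{n-1}\hookrightarrow M$, which is compatible with both horizontal maps by naturality. In the reduced case, the top path sends $1$ to the unit of $\int_{S^1}R\simeq\hh_\ast(R)$, which corresponds to the class of $R$ itself in $\hh_\ast(\Perf_R)$, namely its identity endomorphism. The functor $-\otimes_R V$ sends $R$ to $V$, and Hochschild homology is functorial for this functor, so the class goes to the class of $\mathrm{id}_V$ in $\hh_\ast(\Perf_A)$. By Theorem~\ref{betaclaims}, with $\End_A(V)$ and the unit map, this class is precisely $\tr(V)\in\int_{S^1\times\RR^{n-1}}A$. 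That is the bottom path, and the two paths agree.

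I expect the main obstacle to be the identification, in this last step, of the image of the unit of $\hh_\ast(R)$ under the Morita map with the class of $V$ in $\hh_\ast(\Perf_A)$, together with its compatibility with the equivalence $\hh_\ast(\Perf_A)\simeq\int_{S^1\times\RR^{n-1}}A$ from Theorem~\ref{betaclaims}. The approach I would try first is to use the commutative square of Theorem~\ref{betaclaims} twice. I would apply it once for $R$, with the module $R$, and once for $A$, with the module $V$. I would then check that the functor $\Perf_R\to\Perf_A$ intertwines the two unit maps out of $\obj(\Perf)$, which holds by naturality of the beta unit map in rigid $\int_{S^{n-2}\times\RR^2}A$-linear functors.
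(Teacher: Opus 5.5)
Your proposal is correct and follows essentially the same route as the paper: the square comes from Lemma~\ref{lemma.bimmodule} with $S=A$, and the triangle comes from the fact that $-\otimes_R V$ sends $R$ to $V$, so that on Hochschild homology the unit of $\int_{S^1}R$ goes to $\tr(V)$. Beyond the paper, you make explicit two points it leaves implicit: the identification $\int_{K\subset M}(A,A)\simeq\int_M A$, and the use of Theorem~\ref{betaclaims} to match the class of $V$ in ${\rm HH}_\ast(\Perf_A)$ with $\tr(V)$.
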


\begin{proof} 
Applying Lemma~\ref{lemma.bimmodule} to the case of $S=A$ gives the commutative square above. It remains to establish the commutative triangle on the left. By definition the map $\int_K R \ra \int_K A$ is given, for each connected component of $K$, by the value of Hochschild homology on the map
\[
\Perf_R \xra{-\ot_R V} \Perf_A~.
\]
This functor sends the unit $R\in\Perf_R$ to $V$. Applying Hochschild homology, this sends the unit element of $\int_{S^1} R$ to the element $\tr(V)\in \int_{S^1}A$.
\end{proof}

\section{Recollections on filtered Koszul duality}
\label{sec: filtered KD}

This section unpacks the consequences of the preceding work in the context relevant to this paper.
In short, it combines the above with results from \cite{CosYang} on filtered Koszul duality,
demonstrating how to recover the tangle TFTs for finite-dimensional representations of $\g$.
The rest of the paper is devoted to quantizing the situation explained here.

\subsection{Recollections}

Recall that a {\em filtration} on a cochain complex $V$ is a sequence 
\[
V=F^0V\la F^1 V \la \ldots.
\]
The sequence is {\em complete} if $\varprojlim F^i V$ is zero. 
Equivalently, by taking cokernels $V_i := V/F^i V$, a filtration of $V$ is a functor $\NN^{\rm op} \to \Mod_\CC$ whose limit is equivalent to~$V$.

\begin{dfn}
Let $\cV$ denote the $\oo$-category of {\em complete filtered cochain complexes over $\CC$}, namely
\[
\cV := \Fun(\NN^{\rm op}, \Mod_\CC)~.
\]
\end{dfn}

This category $\cV$ is symmetric monoidal, with tensor product given by Day convolution: see~\cite{CosYang} and~\cite{GwiPav}. The tensor product distributes over colimits separately in each variable. 
A {\it filtered algebra} is an algebra in $\cV$.

We now turn to our starring example.

Consider the commutative algebra $A = \C^*\g$
equipped with the decreasing filtration
\[
F^k A = \Sym^{\geq k}(\g^\vee[-1]),
\]
with respect to which it is complete.
If we work in the context of complete filtered cochain complexes, 
as developed in Sections 7.2 and 8 of \cite{CosYang},
then the Koszul dual coalgebra is
\[
A^! =\CC \otimes_A^\LL \CC
\]
the derived tensor product in filtered complexes.
Note here the role of the filtration,
which also equips $A^!$ with a natural filtration.

This coalgebra is naturally quasi-isomorphic to a familiar coalgebra:
the completed dual $(\U\g)^\vee$.
On $\U\g$ we have  the natural increasing filtration 
\[
F_k \U\g = \bigoplus_{j \leq k} \Sym^j(\g),
\]
and it induces on the completed linear dual $(\U\g)^\vee$ a decreasing filtration 
\[
F^k (\U\g)^\vee = (\U\g/F_k \U\g)^\vee.
\]
If $\g$ is finite-dimensional,
then $(\U\g)^\vee$ is in fact a Hopf algebra,
namely the functions on the formal group associated to~$\g$.

\begin{lmm}[\cite{CosYang}, Lemma 7.2.2]
Let $\g$ be a Lie algebra. Then 
\[
(\C^*\g)^! \simeq (\U\g)^\vee
\]
as complete filtered Hopf algebras.
\end{lmm}

One can interpret this lemma as saying that the Koszul dual {\em algebra} to $\C^*\g$ is $\U\g$,
so long as one takes care with filtrations and completions.

\subsection{Koszul duality for $\g$}

\def\RComod{{\rm RComod}}

We now consider left modules over $A$ in the dg category of complete filtered cochain complexes
and, dually, right comodules over $A^!$ in the same dg category.
We denote these by $\LMod_A$ and $\RComod_{A^!}$, respectively.

We also care about certain subcategories.
Let $\Perf_A$ denote the full subcategory of $V\in \LMod_A$ consisting of modules that are dualizable, i.e., for which there exists a perfect pairing
\[
V\otimes_A V^\vee \longrightarrow A
\]
This property of dualizability can be checked at the level of the associated graded,
which recovers the definition in Section 8.1 of \cite{CosYang}.
Let $\Fin_{A^!}$ denote the full subcategory of $\RComod_{A^!}$ consisting of comodules whose cohomology has finite total dimension
(i.e., its cohomology is bounded and is finite-dimensional over $\CC$ in every degree).

A central result of Section 8 of \cite{CosYang} is the following.
(Here we have fixed $A$ but the proof there applies to a broader class of complete filtered algebras.)

\begin{prp}
The functor $\CC \otimes^\LL_A - \colon \LMod_A \to \RComod_{A^!}$ is a quasi-equivalence of dg categories.
Moreover, it restricts to a quasi-equivalence from $\Perf_A$ to $\Fin_{A^!}$.
\end{prp}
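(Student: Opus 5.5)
The plan is to run the standard bar/cobar adjunction argument for Koszul duality in complete filtered complexes, reducing every claim to the associated graded, where the statement becomes classical. Write $A=\C^*\g$ and $A^!=\CC\otimes^\LL_A\CC\simeq(\U\g)^\vee$ (by the cited Lemma 7.2.2 of \cite{CosYang}). The functor $\Phi=\CC\otimes^\LL_A-$ lands in right $A^!$-comodules, because $\CC\otimes^\LL_A-$ has a natural coaction of $\CC\otimes_A\CC$ via $\CC\otimes_A M\to \CC\otimes_A\CC\otimes_A M$. We construct a candidate inverse $\Psi$ by the cobar construction: send a comodule $N$ to the derived cotensor product $N\,\square^\LL_{A^!}\,\CC$. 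Concretely this is the totalization of the cosimplicial object $N\otimes (A^!)^{\otimes\bullet}$, which carries a left action of $\mathrm{coEnd}$-type $\CC\,\square_{A^!}\CC\simeq A$. The unit $M\to\Psi\Phi M$ and counit $\Phi\Psi N\to N$ are the canonical maps.

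Next we prove that both maps are filtered quasi-isomorphisms. Since all objects are complete, a map is an equivalence iff it is so on associated graded pieces, and passing to $\mathrm{gr}$ commutes with $\Phi$ and $\Psi$ by the construction of the Day convolution structure. On the associated graded, $\mathrm{gr}A=\Sym(\g^\vee[-1])$ is the free graded-commutative algebra, so the Lie differential vanishes, and $\mathrm{gr}A^!=\Sym(\g)^\vee$ with weights placed so that the weight grading forces Koszulity. Here we have the classical Koszul duality between $\Lambda(\g^\vee)$-modules and $\Sym(\g)^\vee$-comodules. Because the weight grading is bounded below and each weight piece is finite-dimensional, the bar and cobar resolutions converge weight by weight. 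This is the main technical obstacle: convergence of $\Psi$ needs completeness, since without the filtration Koszul duality fails (for example on non-nilpotent modules). The care is in checking that totalization commutes with $\mathrm{gr}$, and for that I would use the Mittag-Leffler property of the finite-dimensional weight pieces.

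For the restriction statement, we first check $\Phi(A)=\CC$, which has finite total cohomology. Since $\Fin_{A^!}$ is closed under finite (co)limits, retracts and shifts, and $\Perf_A$ is the thick subcategory generated by $A$ (dualizability checked on $\mathrm{gr}$ gives finite-rank free $\mathrm{gr}A$-modules up to retracts), $\Phi(\Perf_A)\subseteq\Fin_{A^!}$. Conversely, a comodule $N$ with finite total cohomology has a finite filtration by its cohomology, whose pieces are finite-dimensional comodules, that is, nilpotent finite-dimensional $\g$-modules. By dévissage, $N$ lies in the thick subcategory generated by the trivial comodule $\CC=\Phi(A)$, and hence $\Psi N$ is perfect. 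The d\'evissage can be done at the level of $\mathrm{gr}$, where finite-dimensional comodules over $\Sym(\g)^\vee$ are iterated extensions of $\CC$. Full faithfulness on the subcategories is inherited from the first part.
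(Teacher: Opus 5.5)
Your first half (the explicit cobar inverse $\Psi$, checking unit and counit on $\mathrm{gr}$, and weight-wise convergence of bar and cobar for $\Lambda(\g^\vee[-1])$ against $\Sym(\g)^\vee$) is the expected argument. The paper gives no argument of its own and defers to Section~8 of \cite{CosYang}. The gap is in the restriction to $\Perf_A$ and $\Fin_{A^!}$. You claim that $\Perf_A$ is the thick subcategory generated by $A$, and that a finite comodule is an iterated extension of the trivial comodule, i.e.\ a nilpotent $\g$-module. In the complete filtered setting both claims are false. This is exactly what the filtration buys: you note yourself that unfiltered Koszul duality fails on non-nilpotent modules.

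For finite $N$ the coaction lands in the \emph{completed} tensor product $N\hat\otimes(\U\g)^\vee\cong\Hom(\U\g,N)$, so every finite-dimensional $\g$-module is a finite comodule. Already for $\g=\CC$, the module on which the generator acts by $\lambda\neq0$ has coaction $1\mapsto e^{\lambda t}$, which is not locally nilpotent. Now take $\g$ semisimple and $V$ a nontrivial irreducible representation. Then $\C^*(\g,V)$ with $F^k=\Sym^{\geq k}(\g^\vee[-1])\otimes V$ is perfect, since its associated graded is free of rank $\dim V$ over $\mathrm{gr}A$. This holds even though its underlying complex is acyclic, as $H^*(\g,V)=0$. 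Moreover $\Phi(\C^*(\g,V))$ is $V$ with its $\g$-action.

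Forgetting the filtration is exact and sends filtration shifts of $\CC$ to the trivial module. For semisimple $\g$, extensions of trivial modules split, so every object of the thick subcategory generated by $\CC$ has trivial $\g$-action on cohomology. Hence $V$ is not in that subcategory, and correspondingly $\C^*(\g,V)$ is not in the thick subcategory generated by $A$ and its filtration shifts. Your d\'evissage would leave only trivial representations in $\Fin_{A^!}$. But the content of the statement, used later for the modules $\C^*(\g,\SS_\rho)$, is that all finite-dimensional representations occur.

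The repair is to do the whole restriction on the associated graded, where both conditions are detected. Perfection is detected there by the convention stated in the paper. Finiteness is detected there provided $\Fin_{A^!}$ is read on $\mathrm{gr}$: a complete object whose $\mathrm{gr}$ is finite-dimensional and lives in finitely many weights has a finite filtration. Finite underlying cohomology alone is not enough. An infinite product of acyclic two-step filtered complexes with trivial coaction has zero cohomology, but $\Psi$ of it is not perfect.

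Because $\mathrm{gr}\,\Phi\simeq\Phi^{\mathrm{gr}}\,\mathrm{gr}$ and $\mathrm{gr}\,\Psi\simeq\Psi^{\mathrm{gr}}\,\mathrm{gr}$, it then suffices that weight-graded Koszul duality between $\Lambda(\g^\vee[-1])$ and $\Sym(\g)^\vee$ exchanges perfect modules and finite comodules. There your d\'evissage is legitimate. The reduced coaction has strictly positive weight in the $\Sym(\g)^\vee$ factor, so a finite graded comodule is an iterated extension of weight-shifted copies of $\CC$, which $\Psi^{\mathrm{gr}}$ sends to weight-shifted copies of $\mathrm{gr}A$. What does not survive is lifting this d\'evissage from $\mathrm{gr}N$ to $N$. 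For $V$ placed in weight $0$, $\mathrm{gr}V$ is a sum of trivial comodules over $\mathrm{gr}A^!$, while $V$ is not built from trivial comodules over $A^!$.
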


Just as we saw above that $(A^!)^\vee \simeq \U\g$---with a proper understanding of filtrations---we can extend this result to talking about modules over $\U\g$, suitably understood.
See Section 8.2 of \cite{CosYang} for a discussion.
Here we only care about dg $\U\g$-modules whose cohomology has total finite dimension,
and which we thus call {\em finite}.

\def\op{{\rm op}}

\begin{prp}[\cite{CosYang}, Lemma 8.2.5]
There is a quasi-equivalence of dg categories
\[
\Fin_{A^!} \simeq \Fin_{(A^!)^\vee}^\op.
\]
\end{prp}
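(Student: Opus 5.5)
The plan is to realize the equivalence $\Fin_{A^!}\simeq \Fin_{(A^!)^\vee}^\op$ by levelwise linear duality, using the filtration to reduce to finite-dimensional linear algebra. Write $C = A^! \simeq (\U\g)^\vee$, which by the lemma of \cite{CosYang} quoted above is a complete filtered coalgebra, and $C^\vee \simeq \U\g$ with its increasing PBW filtration. The functor I would use is
\[
D\colon \Fin_{C} \longrightarrow \Fin_{C^\vee}^\op,\qquad N \longmapsto N^\vee=\Hom_\CC(N,\CC),
\]
where a right $C$-comodule structure $N\to N\otimes C$ dualizes to a left (or, after the antipode, right) action $C^\vee\otimes N^\vee\to N^\vee$. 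I would first check that this is well defined. Since $N$ has finite total cohomology, I will replace it by a quasi-isomorphic comodule whose underlying complex is finite-dimensional. Then $(N\otimes C)^\vee \cong C^\vee\otimes N^\vee$ holds without any completion subtlety, provided the coaction lands in $N\otimes F^0 C / F^k C$ for some $k$. This is where completeness and the filtration enter: a finite-dimensional comodule over the complete filtered coalgebra $C$ has a coaction factoring through a finite stage $C/F^kC$, which is the dual of $F_{k}\U\g$.

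Second, I will show that $D$ is quasi-fully faithful. Hom complexes in comodules are computed by the cobar complex $\Hom(N, N'\otimes C^{\otimes\bullet})$. Dualizing termwise, this becomes the bar-type complex computing $\Hom_{C^\vee}(N'^\vee,N^\vee)$. The identification is exact at each filtration stage because everything is finite-dimensional there. It then passes to the limit over the filtration, since both sides are complete and the relevant $\lim^1$ terms vanish by the Mittag-Leffler condition, the transition maps being surjective. The reversal of the order of $N$ and $N'$ accounts for the ${}^\op$.

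Third, essential surjectivity: a finite $C^\vee$-module $P$ is quasi-isomorphic to a finite-dimensional dg module. Its dual $P^\vee$ is a $C$-comodule as soon as the action of $C^\vee=\U\g$ is continuous for the filtration, i.e.\ factors through some $F_k$ in the appropriate sense. This is precisely the finiteness condition built into ``finite'' dg $\U\g$-modules in Section 8.2 of \cite{CosYang}, and double duality $P\simeq P^{\vee\vee}$ finishes the argument.

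The main obstacle is the interplay between the filtration and duality. In particular, I must make sure that ``finite'' modules over $\U\g$ (understood as modules over the filtered dual of $C$) really correspond to comodules over the completed coalgebra, rather than to arbitrary $\U\g$-modules, which need not be locally nilpotent for the augmentation filtration. I would handle this first, by working with the filtered category $\cV$ and checking the statement on associated gradeds, where it reduces to the classical duality between finite-dimensional comodules over $\Sym(\g)^\vee$ and modules over $\Sym(\g)$.
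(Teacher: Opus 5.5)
The paper gives no argument of its own here; it simply cites \cite{CosYang}. Linear duality, contravariant and so producing the ${}^{\mathrm{op}}$, is the natural mechanism. However, the technical claim behind your first and third steps is false, and the fix you propose at the end points in the wrong direction.

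A finite-dimensional comodule over the complete filtered coalgebra $C=(\U\g)^\vee$ need \emph{not} have a coaction involving only a finite stage $C/F^kC$. The coaction takes values in the completed tensor product $N\hat\otimes C$. Already for $\g=\CC$, with $C=\CC[[t]]$ and $\U\g=\CC[x]$, let $x$ act on $N$ by a non-nilpotent matrix $M$. The resulting coaction is $n\mapsto e^{tM}n$, which involves every power of $t$.

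These non-nilpotent comodules are exactly the ones the paper needs. By the preceding proposition, $\Fin_{A^!}$ is the essential image of $\Perf_{\C^*\g}$, which contains $\C^*(\g,V)$ for every finite-dimensional $V$. The corresponding comodule is, up to filtered quasi-isomorphism, $V$ with coaction $v\mapsto\sum_u u\cdot v\otimes u^\vee$, where $u$ runs over a PBW basis. This sum has infinitely many nonzero terms unless $\g$ acts nilpotently.

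If you restrict, as in your third step and last paragraph, to $\U\g$-modules whose action is ``continuous'' in your sense, you prove a different statement. For semisimple $\g$, complete reducibility forces such modules to be trivial representations. Everything used later would then be lost, e.g.\ the identification of $\Fin_{\U\g}$ with finite-dimensional dg representations of $\g$. The associated-graded check cannot repair this: after passing to $\mathrm{gr}$, finite-dimensional graded comodules are automatically nilpotent, so $\mathrm{gr}$ does not see the issue. It is only useful for checking full faithfulness of a functor you have already defined.

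The missing idea is that completeness is what makes duality work, with no finiteness hypothesis on the coaction. Each $C/F^kC\cong(F_k\U\g)^\vee$ is finite-dimensional, so $C\cong\varprojlim_k (F_k\U\g)^\vee$ is the \emph{full} linear dual of the discrete algebra $\U\g=\varinjlim_k F_k\U\g$. Hence for finite-dimensional $N$ one has $N\hat\otimes C\cong\Hom_\CC(\U\g,N)$, and, taking continuous duals, $(N\hat\otimes C)^\vee\cong\U\g\otimes N^\vee$. So a counital coassociative coaction $N\to N\hat\otimes C$ is the same thing as an arbitrary $\U\g$-action. Conversely, the dual of \emph{any} finite-dimensional $\U\g$-module $P$ carries a coaction $P^\vee\to P^\vee\hat\otimes C$ with no continuity condition.

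With $\otimes$ replaced by $\hat\otimes$ throughout, including in the cobar complexes of your second step, your duality functor and full-faithfulness argument go through. You would still owe an argument for replacing an object with finite-dimensional total cohomology by a strictly finite-dimensional model. Homotopy transfer naturally produces only a homotopy-coherent coaction.
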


Putting all these results together, we have the following.

\begin{crl}
\label{filt KD}
There is a quasi-equivalence 
\[
\Perf_{\C^*\g} \simeq \Fin_{\U\g}^\op
\]
of dg categories.
\end{crl}

In fact, since $\C^*\g$ is commutative, 
the left hand side is a symmetric monoidal dg category via $\otimes^\LL_{\C^*\g}$.
The right hand side is also symmetric monoidal, but via $\otimes^\LL_\CC$.
One can trace through the construction of the quasi-equivalence to see that it is a quasi-equivalence of {\em symmetric monoidal} dg categories.

Earlier we used $\Rep_\fin(\g)^\dg$ to denote the $\infty$-category of finite-type dg representations of $\g$.
The dg category $\Fin_{\U\g}$ is presents this $\infty$-category,
just as $\Perf_{\C^*\g}$ presents the $\infty$-category of perfect filtered modules over $\C^*\g$.
Hence we have obtained the equivalence (\ref{filtKD}) mentioned in the introduction.

\begin{rmk}
The equivalence (\ref{filt KD}) is certainly well-known to experts in this wing of homotopical algebra.
An essentially equivalent result is proved in \cite{CPTVV} (see Section 3.6.2),
where mixed-graded structures encode the filtrations discussed here.
\end{rmk}

\subsection{Extending to quantizations}

We now deform the preceding equivalence over the formal variable $\hbar$.
By Drinfeld's theorem, 
there exists an $\cE_2$-monoidal deformation of $\Fin_{\U\g} = \Rep^{\dg}_{\fin}(\g)$ for each choice of level~$\lambda$. 
This deformation determines a deformation of~$\C^\ast(\g)$. 
Immediately below, we use the term {\it moduli functor} to mean a space-valued functor on derived Artin commutative algebras -- in particular, we do not require that is infinitesimally cohesive or has a tangent complex.

\begin{dfn}
Given an $\cE_2$-monoidal $\cV$-linear $\oo$-category $\cC\in \Alg(\Mod_\cV)$, 
let ${\rm tDef}(\cC)$ denote the moduli functor whose $R$-points are the space of $R$-linear $\cE_2$-monoidal deformations of $\cC$ that are {\em trivialized} as $\cE_1$-monoidal deformations. 
That is, ${\rm tDef}(\cC)(R)$ is the fiber
\[
\xymatrix{
{\rm tDef}(\cC)(R)\ar[r]\ar[d]&\Alg_{\cE_2}(\Mod_R(\Mod_\cV))^\sim\ar[d]\\
\{R\otimes \cC\}\ar[r]&\Alg(\Mod_R(\Mod_\cV))^\sim}
\]
or, equivalently, ${\rm tDef}(\cC)(R)$ is the space of $\cE_2$-monoidal refinements of $R\otimes \cC$ as a monoidal $\cV$-linear $\oo$-category.
\end{dfn}

That is, we are deforming just the braiding: in the notation ${\rm tDef}$ the prefix t emphasizes this trivialization of the deformation of the underlying monoidal structure.

There is an analogous moduli space for~$\cE_2$-algebras.

\begin{dfn}
For an $\cE_2$-algebra $A\in \Alg_{\cE_2}(\cV)$ in $\cV$, 
let ${\rm tDef}(A)$ denote the moduli functor whose $R$-points are the space of $R$-linear $\cE_3$-algebra deformations of $A$ that are trivialized as $\cE_2$-algebra deformations. 
That is, ${\rm tDef}(A)(R)$ is the fiber
\[
\xymatrix{
{\rm tDef}(A)(R)\ar[r]\ar[d]&\Alg_{\cE_3}(\Mod_R(\cV))^\sim\ar[d]\\
\{R\otimes A\}\ar[r]&\Alg_{\cE_2}(\Mod_R(\cV))^\sim}
\]
or, equivalently, ${\rm tDef}(A)(R)$ is the space of $\cE_3$-algebra refinements of $R\otimes A$ as an $\cE_2$-algebra. 
\end{dfn}

\begin{lmm}
Let $A$ be an $\cE_2$-algebra in $\cV$. 
There is a natural equivalence of moduli functors
\[
{\rm tDef}(A) \simeq {\rm tDef}\bigl(\Perf_A(\cV)\bigr)
\]
between $\cE_2$-trivialized $\cE_3$ deformations of $A$ and monoidally-trivialized $\cE_2$-deformations of~$\Perf_A(\cV)$.
\end{lmm}

\begin{proof}
We establish a pair of maps
\[
\Perf: {\rm tDef}(A) \leftrightarrows {\rm tDef}\bigl(\Perf_A(\cV)\bigr): \End(1)~,
\]
which are quickly seen to be mutually inverse.
Given $B\in \Alg_{\cE_3}(\Mod_R(\cV))$ an $\cE_2$-trivialized $\cE_3$-deformations of $A$ over $R$, 
then $\Perf_B(\cV)$ is an $R$-linear $\cE_2$-monoidal $\oo$-category. 
Since an $\cE_2$-algebra equivalence $B\simeq R\otimes A$ is part of the data of a point in ${\rm tDef}(A)(R)$, 
therefore $\Perf_B(\cV)$ is equivalent to $\Perf_{R\otimes A}(\cV) \simeq R\otimes\Perf_A(\cV)$ as a monoidal $\oo$-category.

Given $\cD \in {\rm tDef}(\Perf_A(\cV))(R)$, 
the endomorphisms of the monoidal unit $\End_\cD(1)$, that is the object in $\cV$ given by the enrichment of $\cD$ over $\cV$ obtains the structure of an $\cE_3$-algebra in $R\otimes \cV$ by the Deligne conjecture: it has the structure of an $\cE_1$-algebra (given by composition of endomorphisms) in $\cE_2$-algebras, which by Lurie--Dunne additivity gives an $\cE_3$-algebra structure. 
From the equivalence of monoidal $R$-linear $\oo$-categories $\cD\simeq \Perf_{R\otimes A}(\cV)$, 
we obtain an equivalence $\End_\cD(1)\simeq R\otimes A$ as $R$-linear $\cE_2$-algebras in $\cV$, 
and hence a point of~${\rm tDef}(A)(R)$.
\end{proof}

\begin{crl}\label{cor.filt.E3.q.gp}
Given a choice of level $\lambda$, consider the $\cE_2$-monoidal deformation $\Rep^{\dg}_{\fin}(\U_\hbar\g)$ of $\Rep^{\dg}_{\fin}(\g)$ given by the Drinfeld--Jimbo quantum group. 
The derived endomorphisms of the unit $\C^*_\hbar(\fg)$ obtains the structure of a filtered $\cE_3$-algebra, 
and there is a natural equivalence 
\[
\Perf_{\C^*_\hbar(\fg)}(\cV) \simeq \Rep^{\dg}_{\fin}(\U_\hbar\g)
\]
of $\cE_2$-monoidal $\oo$-categories.
\end{crl}

\section{Chern--Simons theory and its factorization algebra}
\label{sec: CS}

The main result of this section is the following.

\begin{prp}
A choice of level $\lambda \in \hbar \H^3(\g)[[\hbar]]$ determines a perturbative quantization of Chern--Simons theory,
which in turn produces a nontrivial deformation $\cA^\lambda$ of $\cA^\cl = \C^*(\g)$ as a filtered $\cE_3$-algebra.
\end{prp}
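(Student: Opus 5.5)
The proof proceeds in three stages: existence of the quantization, passage to a locally constant factorization algebra and hence an $\cE_3$-algebra, and identification of that algebra together with its filtration.

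\textbf{Step 1: the quantization.} By Theorem~\ref{thm: levels for CS}, the classical BV theory on $\RR^3$ with pairing $\lambda^\cl$ admits BV quantizations. The set of equivalence classes is a torsor over $\hbar\CC[\![\hbar]\!]\otimes\H^3(\g)$. The obstruction group at each order is $\H^4(\g)$, which vanishes for semisimple $\g$ (by the Whitehead lemmas applied componentwise). So fixing one base quantization, for instance one built from configuration-space propagators à la Axelrod--Singer--Kontsevich, lets the level $\lambda\in\hbar\H^3(\g)[\![\hbar]\!]$ label a quantization. This gives the first half of the statement.

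\textbf{Step 2: the $\cE_3$-algebra.} Apply the main construction of \cite{CG2}. A perturbative BV quantization determines a factorization algebra $\Obs^q_\lambda$ of quantum observables on $\RR^3$ over $\CC[\![\hbar]\!]$.
\begin{enumerate}
\item I would prove local constancy first modulo $\hbar$. There, for a ball $D$, the classical observables are $\cO(\Omega^*(D)\otimes\g[1])$ with the Chevalley--Eilenberg differential. By the Poincaré lemma the inclusion $\g\to\Omega^*(D)\otimes\g$ is a quasi-isomorphism of dg Lie algebras, so $\Obs^\cl(D)\simeq\C^*(\g)$ and inclusions of balls are quasi-isomorphisms; this is Lemma~\ref{poin}.
\item A spectral sequence argument for the complete $\hbar$-adic filtration then upgrades this to quasi-isomorphisms $\Obs^q(D)\to\Obs^q(D')$.
\item Since the theory is translation invariant and in fact framed-diffeomorphism-equivariant up to homotopy, Lurie's theorem (Theorem 5.4.5.9 of \cite{LurHA}) turns the restriction to balls into an $\cE_3$-algebra $\cA^\lambda$ whose reduction mod $\hbar$ is the commutative algebra $\C^*(\g)$.
\end{enumerate}

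\textbf{Step 3: filtration and nontriviality.} For the filtration I would combine the $\Sym^{\ge i}$ filtration on $\C^*(\g)$ with the grading $F^i\CC[\![\hbar]\!]=\hbar^{2i}\CC[\![\hbar]\!]$. The claim is that the quantum differential and the factorization products respect this combined filtration. This uses a weight count on Feynman graphs: each loop costs a power of $\hbar$ while the BV Laplacian lowers polynomial degree by two, and the degree-$i$ filtration is assigned weight $i$ so that $\hbar$ carries weight $2$ under the identification above.

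Nontriviality I would detect at first order in $\hbar$. The $\cE_3$-structure mod $\hbar^2$ induces a degree $-2$ Poisson bracket on $\C^*(\g)$. Under Koszul duality (Corollary~\ref{filt KD} and the tDef lemma) this bracket corresponds to the infinitesimal braiding $\lambda^{-1}\in(\Sym^2\g)^\g$, which is nonzero. Hence the $\cE_3$-deformation is nontrivial.

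\textbf{Main obstacle.} I expect the hard step to be the compatibility of the filtration with the quantum structure maps, together with making the equivariance of Step 2 rigorous enough to apply Lurie's theorem in the filtered category $\cV$. For the filtration, I would first try to reduce to the associated graded, where the claim becomes a statement about the classical $P_0$ factorization algebra. That statement is checked from the explicit form of the BV bracket.
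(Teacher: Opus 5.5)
Steps 1 and 2 are close to the paper's argument, but the nontriviality argument in Step 3 has a genuine gap. Corollary~\ref{filt KD} and the tDef lemma only \emph{transport} a first-order filtered $\cE_3$-deformation of $\C^*(\g)$ to a first-order braiding on $\Rep_\fin(\g)$. They do not tell you \emph{which} class in $(\Sym^2\g)^\g$ the BV quantization produces. The braiding on the categorical side is defined from $\cA^\lambda$, so asserting that it is $\lambda^{-1}$ presupposes exactly the computation that is missing. You need an input that actually sees the quantization: the BV Laplacian, whose kernel is built from the inverse pairing, must contribute to the $S^2$-family of products. The paper supplies this in Lemma~\ref{lmm: nontrivial P2 bracket} by descent. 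For a linear observable $\alpha=x^\vee$ one has $\d^\q(\cO_{\bar D,\alpha}\cO_{\{0\},\alpha})=\cO_{S,\alpha}\cO_{\{0\},\alpha}+\hbar$. So the bracket of $[\alpha]$ with itself (descend $\alpha$ along a sphere $S$ around the origin and multiply with $\alpha$ at the origin) is the nonzero class $\hbar$. Alternatively, Lemma~\ref{lmm: push to torus} pushes forward along $T^2\times\RR\to\RR$ and obtains a Weyl-type, hence noncommutative, $\cE_1$-algebra from the free-theory analysis of \cite{CG1}.

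Both computations take place on the associated graded of the filtration of Definition~\ref{dfn: obs filtration}. That associated graded is the \emph{quantum} observables of abelian Chern--Simons theory for $|\g|$ (differential $\d_{dR}+\hbar\Delta$), not a classical $P_0$ factorization algebra as you suggest. Working there is essential, not cosmetic. In the nonabelian $\C^*(\g)$ the linear elements of $\g^\vee[-1]$ are not cocycles, and a degree $-2$ bracket on $\H^*(\g)$ can vanish for degree reasons. For $\g=\mathfrak{sl}_2$, $\H^*(\g)=\Lambda[x_3]$ and $\{x_3,x_3\}\in\H^4(\g)=0$, so the induced bracket on cohomology is identically zero. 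On the associated graded the linear observables are cocycles and the bracket is visibly $\hbar$ times the inverse pairing. This gives nontriviality as a \emph{filtered} deformation, which is what the statement asserts; the filtered Koszul duality you invoke already lives in that setting.

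The same associated graded repairs your Step 2. Your direct $\hbar$-adic spectral sequence only gives quasi-isomorphisms of underlying complexes, whereas Lurie's theorem in $\cV$ needs filtered quasi-isomorphisms. The paper's proof runs the $\hbar$-adic argument on the associated graded, whose $E_1$ page is the classical abelian observables (locally constant by the Poincar\'e lemma). No translation invariance or framed equivariance is needed.

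Two smaller corrections:
\begin{enumerate}
\item For $\hbar\Delta$ to preserve the filtration, $\hbar$ must have weight $2$, i.e.\ $F^{2i}\CC[[\hbar]]=\hbar^i\CC[[\hbar]]$, not $F^i=\hbar^{2i}$.
\item $\H^4(\g)=0$ does not follow from Whitehead's lemmas alone. It also needs the fact that $\H^*$ of a simple Lie algebra is exterior on primitive generators in degree $3$ and degrees $\ge 5$, together with K\"unneth.
\end{enumerate}
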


This filtered $\cE_3$-algebra $\cA^\lambda$ provides the framed 3-disk algebra 
appearing in our construction of Reshetikhin--Turaev invariants.

The proof of this proposition has three steps:
\begin{itemize}
\item perturbatively quantize Chern--Simons theory, following~\cite{CosBook} in Section~\ref{subsec: def cplx for CS} and the appendix;
\item obtain a factorization algebra by the machinery of \cite{CG1,CG2} in Sections~\ref{sec: classical observables} and ~\ref{subsec: quantum observables}; and
\item verify it is a nontrivial deformation in Section~\ref{subsec: nontrivial def}.
\end{itemize}
Note that our discussion of the first step follows Costello's approach \cite{CosCS, CosBook},
which carefully treats these constructions within the BV formalism and is deformation-theoretic in nature.
A review of the Feynman diagrammatics and associated issues appears in Appendix~\ref{pure CS quantization}.
It is purely expository, 
since the first systematic mathematical treatment of the perturbative quantization of Chern--Simons theory was by Axelrod--Singer \cite{AxeSinI,AxeSinII};
contemporaneous work by Kontsevich is outlined in~\cite{KonECM}.
(Many others developed this perturbative expansion, particularly physicists \cite{GMM},
for which \cite{GuaBook} is a systematic reference. 
In \cite{IacCS, CatMne} one can find other helpful treatments of Chern--Simons theory within the BV formalism.) 

Before embarking on the proof, we describe classical Chern--Simons theory in the form relevant to us
and then analyze its factorization algebra of observables,
explaining why it is equivalent to $\C^*(\g)$.
The quantum observables exist automatically by the main theorem of \cite{CG2},
since we exhibit a nontrivial BV quantization.
Unfortunately, they are not amenable to direct understanding,
as the factorization structure involves the full complexity of the Feynman diagrams.\footnote{The case of {\em abelian} Chern--Simons theory is, however,
amenable to understanding---since it is a free theory---and 
it is analyzed in Section 4.5 of~\cite{CG1}.}
Instead, in Section~\ref{subsec: nontrivial def} 
we verify it is a nontrivial deformation by somewhat indirect methods.

\subsection{Classical Chern--Simons theory}

We describe the perspective relevant for our purposes,
for which Section 4, Chapter 5 of \cite{CosBook} or \cite{CosCS} are references.
(For a discussion of much more, particularly many aspects we ignore, see \cite{FreCCS}.)
Here we take a slightly expository approach as this perspective may be slightly unusual;
we explain the connection with the standard physics terminology in Section~\ref{subsec:CSfields}.

\subsubsection{}

Any classical field theory has {\em equations of motion}, 
the system of partial differential equations that govern the physical system under consideration.
For Chern--Simons theory on a manifold $M$ with principal $G$-bundle $P \to M$ 
and $G$ a finite-dimensional Lie group,
the fields are $\g$-valued connections $\nabla$ for $P$,
and the equation of motion is the zero curvature equation $F_\nabla = 0$.
Suppose for simplicity that $P$ is trivialized 
and thus we have a preferred connection $\d$
so that every other connection can be written as $\nabla = \d + [A, -]$,
where $A \in \Omega^1(M) \otimes \g$.
Then the zero curvature equation is the Maurer--Cartan equation 
\[
\d A + \frac{1}{2} [A,A] = 0.
\]
This geometric situation will suffice for us as we will take $M = \RR^3$ throughout.

The reader may have noticed that this equation of motion does not depend on $M$ having dimension three.
What is special about dimension three is that the Maurer--Cartan equation becomes variational:
it is the  Euler--Lagrange equation for the action functional
\[
CS(A) = \frac{1}{2} \int_M \langle A, \d A \rangle_\g + \frac{1}{6} \int_M \langle A, [A \wedge A] \rangle_\g,
\]
where $\langle-,-\rangle_\g$ denotes a nondegenerate invariant inner product on~$\g$.
(The existence of this pairing is a constraint on the kind of Lie algebras we consider.)

The formula for $CS$ does not make sense for arbitrary 1-form $A$ if $M$ is not compact, 
so it is not always a well-behaved function on the whole space of $\fg$-valued 1-forms.
From the point of view of variational calculus and classical field theory, however, this issue is irrelevant.
What we really care about are variations of $CS$ with respect to compactly-supported fields.
In other words, there is a foliation of the space of fields $\Omega^1(M,\fg)$ by the compactly-supported fields,
namely 
\[
\Omega^1_c(M,\fg) \subset T_{A} \Omega^1(M,\fg) \cong \Omega^1(M,\fg)
\]
at each field $A$. 
There is a well-defined section of the linear dual bundle to this foliation, 
which we might denote $\delta CS$ as it is a kind of 1-form on the space of fields,
and variational calculus says we want the zero locus of this section.
In explicit terms, 
\[
\delta CS(A) = \d A + \frac{1}{2} [A,A], 
\]
the curvature of the connection $\d + A$,
so the zero locus cuts out the flat connections.

\subsubsection{}

So far we have ignored an important aspect of this gauge theory:
we want to consider connections up to gauge equivalence,
i.e., up to the natural automorphisms of the bundle $P$ over~$M$.
In short, the correct space of fields forms a stack, namely connections modulo gauge transformations.
The equations of motion are then used to pick out the locus of flat connections modulo gauge equivalences.
Here one should also be careful to take the correct derived locus.

Instead of expressing the theory in an as-yet undeveloped theory of derived differential geometry,
we will simply state the object with which we work, 
which can be interpreted as a formal derived stack,
a well-defined object in mathematics~\cite{LurSAG,ToeVez}.

\begin{dfn}
\label{dfn:g^M}
Let $M$ be a smooth manifold and
let $\fg$ be a finite-dimensional Lie algebra.
Let $\g^M$ denote the dg Lie algebra $\Omega^*(M,\fg)$ where the bracket satisfies
\[
[\alpha \otimes x, \beta \otimes y] = \alpha \wedge \beta \otimes [x,y]_{\fg},
\]
with $\alpha,\beta \in \Omega^*(M)$ and $x,y \in \fg$, and the differential is $\d \otimes \id_\fg$.
From hereon we will simply denote the differential by $\d$.
This dg Lie algebra has Maurer--Cartan equation
\begin{equation}\label{MCeqn}
\d A + \frac{1}{2} [A,A]= 0
\end{equation}
for $A \in \Omega^1(M,\fg)$, and 
given a solution $A$, the operator $\d + A$ is a flat connection on the trivial bundle $\underline{\fg} \to M$.
\end{dfn}

From the perspective of derived geometry, 
this dg Lie algebra $\g^M$ describes the formal neighborhood of the trivial connection $\d$ inside a derived moduli space of flat connections on the trivial $\fg$-bundle.
We denote this formal moduli space by~${\rm B}\g^M$.

\subsubsection{}
\label{subsec:CSfields}

Let us mention the standard physical terminology for this situation.
Thus, the usual {\em fields} are connection 1-forms $A \in \Omega^1(M) \otimes \g$,
and these are given cohomological (or {\em ghost}) degree zero.
Note that in physics the space of fields is a shift down by one of the underlying graded vector space of~${\rm B}\g^M$.

These fields are precisely the physically relevant (or ``naive'') fields with which we started.
The degree -1 fields---called {\em ghosts}---are $\Omega^0(M) \otimes \g$.
These describe the Lie algebra of gauge symmetries.
The degree 1 fields---called {\em antifields}---are $\Omega^2(M) \otimes \g$,
and the degree 2 fields---called {\em antighosts}---are $\Omega^3(M) \otimes \g$.
These provide the conjugate variables to the fields and ghosts, respectively,
with respect to the shifted symplectic pairing 
\[
\langle \alpha \otimes x, \beta \otimes y \rangle = \pm \int \alpha \wedge \beta \langle x,y\rangle_\g.
\]
The action functional is then the standard BV action arising from the ``naive'' action we started with:
abusively we denote it
\[
CS(A) = \frac{1}{2} \langle A, d_{dR} A \rangle + \frac{1}{6} \langle A, [A \wedge A] \rangle,
\] 
where $A$ now denotes an arbitrary element of the fields.

\begin{rmk}
The physicist's convention is natural in the following sense:
the fields in a BV theory are always the underlying tangent complex of a solution to the variational PDE picked out by an action functional.
It is a nontrivial result of derived geometry that the shifted tangent complex admits a natural Lie algebra structure encoding the full formal neighborhood of that solution, rather than just the tangent space.
\end{rmk}

Due to this shifted symplectic structure, the commutative dg algebra of classical observables 
(discussed in more detail below) should have a shifted Poisson structure.
The differential on this algebra is a Hamiltonian vector field $\d^\cl = \{CS,-\}$,
and the square-zero property $(\d^\cl)^2 = 0$ is equivalent to the {\em classical master equation} (CME),
which says $\{CS,CS\}$ is Poisson-central (i.e., is a constant).
In the BV formalism, the CME is thus a condition for a putative action functional to define a well-posed classical BV theory.

\begin{rmk}
This dg Lie algebra makes sense for any smooth manifold $M$, independent of dimension.
In dimension 3 and when $\g$ is equipped with a nondegenerate invariant inner product, however, one finds that the formal moduli space ${\rm B}\g^M$ has a 1-shifted Poisson structure.
This structure is a general feature of variational PDE.
The {\em derived} critical locus of a function possesses a natural shifted Poisson structure,
and the formal neighborhood of a solution to a variational PDE is precisely the formal neighborhood of a point in a derived critical locus of the action functional.
For extensive discussion, see the chapters on classical field theory in~\cite{CG2}.
\end{rmk}

\subsection{The factorization algebra of classical observables}
\label{sec: classical observables}

The observables of classical Chern--Simons theory are easy to describe and understand.
We begin by describing everything without field-theoretic language and 
then explain the interpretation via field theory.
At first we consider global observables on $M$ and then turn to the local-to-global structure.

\subsubsection{}

The ``classical observables'' should be the dg commutative algebra of functions on the formal moduli space ${\rm B}\g^M$.
In formal derived geometry, if a formal moduli space corresponds to a dg Lie algebra $\mathfrak{h}$, 
then its algebra of functions corresponds to $\C^*(\mathfrak{h})$.
Hence, for classical Chern--Simons theory,
the algebra of classical observables is
\[
\C^*(\g^M) = \C^*(\Omega^*(M) \otimes \g).
\]
It is best here to remember that de Rham forms come equipped with a natural topology,
especially when we turn to constructing a quantization,
so we need to take into account some issues of functional analysis in defining the Chevalley--Eilenberg cochains of this dg Lie algebra. 

These issues are mild and dealt with systematically in \cite{CG1},
which describes a well-behaved category ${\rm DVS}$ of {\em differentiable vector spaces}.
As a brief sketch of the approach there, 
we mention the following.
For $\cE$ the smooth sections $\cE$ of a vector bundle $E \to M$,
the appropriate linear dual $\cE^\vee$ is given by the compactly supported distributional sections of $E^\vee \otimes \Dens \to M$,
where $E^\vee$ denotes the vector bundle dual and $\Dens \to M$ denotes the density line.
For $\cE = C^\infty(M,E)$ and $\cF = C^\infty(M,F)$,
the appropriate tensor product $\cE \otimes \cF$ is given by 
smooth sections of the vector bundle $E \boxtimes F \to M \times N$,
where $E \boxtimes F = \pi_M ^*E \otimes \pi_N^*F$.
We define tensor products for compactly supported or distributional sections similarly
as such sections of the appropriate vector bundle over the product space $M~\times~N$.

Using these notions of dual and tensor product, 
we construct $\C^*(\g^M)$ in the usual way.
Explicitly, we have the underlying vector space
\[
\C^\#(\g^M) = \prod_{n \geq 0} \Sym^n((\Omega^*(M)^\vee \otimes \fg^\vee)[-1])
\]
equipped with a differential 
\[
\d_{dR} \otimes \id_{\fg^\vee} + \d_{b}
\]
where the de Rham differential denotes the natural differential on the compactly supported de Rham currents
(which are the distributional dual to the de Rham forms) 
and $\d_b$ denotes the operator determined by the Lie bracket by the usual formula for Lie algebra cohomology.

Note that $\C^*(\g^M)$ is equipped with the usual decreasing filtration for Chevalley--Eilenbrg cochains:
\[
F^k \C^*(\g^M) = \prod_{n \geq k} \Sym^n((\Omega^*(M)^\vee \otimes \fg^\vee)[-1]),
\]
which manifestly respects the differential.

\subsubsection{}
\label{subsec:obscl special values}

To clarify the kind of information contained in these observables,
let us describe them for some simple choices of manifold~$M$.
\begin{itemize}
\item For $M = \RR^3$, the Poincar\'e lemma implies
\[
\C^*(\g^{\RR^3}) \simeq \C^*(\g).
\]
In other words, the classical local observables are just functions of the ghosts, 
and the naive, physically relevant observables of degree zero cohomology are just constant functions.
\item For $M = S^1 \times \RR^2$, the K\"unneth isomorphism and Poincar\'e lemma implies
\[
\C^*(\g^{S^1 \times \RR^2}) \simeq \C^*(\g[\epsilon]) = \C^*(\g, \csym(\g^\vee)),
\]
where $\H^*(S^1) = \CC[\epsilon]$ with $|\epsilon| = 1$. 
In other words, the naive, physically relevant observables of degree zero cohomology are $\csym(\g^\vee)^\g$, which are trace-class functions on the Lie algebra~$\g$.
This result matches our expectation that the relevant observables for $\g$-local systems on a circle ought to be functions of monodromy, 
since the adjoint quotient $\g/\g$ parametrizes possible monodromy.
\item For $M = \Sigma_g \times \RR$, where $\Sigma_g$ is the closed Riemann surface of genus $g$, we find
\[
\C^*(\g^{\Sigma_g \times \RR}) \simeq \C^*(\g\otimes \H^*(\Sigma_g)).
\]
In particular, the degree zero cohomology has a component 
\[
\csym(\g^\vee \otimes H_1(\Sigma_g))^\g,
\]
which encodes the naive algebra of functions on a formal neighborhood of the trivial connection inside the character variety.\footnote{Note that $\g^\vee \otimes H_1(\Sigma_g,\RR)$ is the linear dual to $\g \otimes \H^1(\Sigma_g,\RR)$, which is the tangent space to the character variety at the trivial connection, if one ignores gauge transformations. Hence the $\g$-invariant formal power series on that tangent space are functions near the trivial connection.}
\end{itemize}
In general, it is easy to understand the classical observables on a manifold $M$ 
so long as one understands the cohomology of~$M$.

\subsubsection{}

Those explicit examples bring to the foreground an important feature of our constructions:
it is functorial in the choice of manifold~$M$.
We now wish to articulate the kind of structure possessed by the observables as a function of the manifold.

It may help to know that we have a sheaf of dg Lie algebras $\Omega^* \otimes \fg$ on $M$,
which we view as a sheaf of formal moduli spaces.
This sheaf assigns to an open set $U \subset M$ the deformations of the trivial flat $\fg$-bundle on $U$.
In other words, we are thinking about the sheaf of solutions to the Maurer--Cartan equation,
which forms a sheaf because partial differential equations are local in nature.

This sheaf is locally constant because the de Rham complex is locally constant (up to quasi-isomorphism).
This property should be clear since the sheaf of local systems is manifestly locally constant.

Since we have a sheaf of formal moduli spaces, 
we expect to obtain a cosheaf of dg commutative algebras by taking the algebra of functions open set by open set.
Thus, they also should satisfy a weaker set of axioms that characterize a factorization algebra of cochain complexes.
We now show that they do. 

\begin{dfn}
The {\em prefactorization algebra of classical observables for Chern--Simons theory} is the precosheaf
\[
\Obs^{\cl} \colon {\rm Opens}(M) \to \Ch({\rm DVS})
\]
assigns to the open set $U \subset M$, the dg commutative algebra $\C^*(\g^U)$.
In fact, all the structure maps respect the filtration and so we can refine the target to complete filtered cochain complexes in~${\rm DVS}$.
\end{dfn}

This construction is well-behaved:
Theorem 6.6.1 of \cite{CG1} immediately specializes to the following.

\begin{lmm}
The precosheaf $\Obs^\cl$ satisfies the following:
\begin{itemize}
\item The structure maps are maps of commutative dg algebras, and so $\Obs^\cl$ is precosheaf of commutative dg algebras.
\item It satisfies codescent for the standard topology, and so $\Obs^\cl$ is cosheaf of commutative dg algebras.
\item It thus satisfies codescent for the Weiss topology and so $\Obs^\cl$ is a {\em commutative} factorization algebra.
\end{itemize}
\end{lmm}

Moreover, as the originating sheaf of dg Lie algebras is locally constant
this factorization algebra $\Obs^\cl$ is locally constant.
By Theorem 5.4.5.9 of \cite{LurHA}, it thus defines an $\cE_3$-algebra~$\cA^\cl_\fg$.
We therefore obtain the following.

\begin{lmm}
\label{poin}
The Poincar\'e quasi-isomorphism $P$ determines a filtered quasi-isomorphism of filtered $\cE_3$-algebras
$\cA_\fg^\cl \simeq \C^*(\fg)$,
where $\C^*(\fg)$ is a filtered $\cE_3$-algebra by forgetting down from filtered $\cE_\infty$ algebras.
\end{lmm}

\subsection{The deformation complex}
\label{subsec: def cplx for CS}

The formalism developed in~\cite{CosBook} allows one to give an obstruction-theoretic proof that BV quantizations exist without computing any Feynman diagrams.
A key ingredient is the {\em deformation complex} of a classical BV theory,
developed in section 4, chapter 5 of \cite{CosBook}.
We foreground this cohomological machinery as it will play a parallel role for the coupled theory,
but we will exhibit an explicit quantization of Chern--Simons theory in the next section.

As a gloss, the deformation complex of a classical BV theory consists of the cochain complex of {\em local functionals} on the fields,
i.e., Lagrangian densities.
(Note that we allow local functionals of arbitrary cohomological degree.)
The BV bracket defines a shifted Lie bracket on the local functionals.
The differential is $\{S^{\cl},-\}$, where $S^{\cl}$ is the classical action.
Hence, the deformation complex is a shifted dg Lie algebra.
Any degree zero element $I$ such that
\[
0 = \{S^{\cl} + I, S^{\cl} + I\} = 2 \{S^{\cl}, I\} + \{I,I\}
\]
is a shifted Maurer--Cartan element and hence determines a new classical BV theory 
whose action functional is $S^{\cl} + I$.
In particular, degree 0 cocycles determine first-order deformations of the classical BV theory.
Cocycles in degree -1 encode local symmetries of the classical theory;
and obstructions to satisfying the quantum master equation end up being degree 1 cocycles.

In Section 14, Chapter 5 of \cite{CosBook},
it is shown that the deformation complex of Chern--Simons theory on $\RR^3$ has a simple form:
\[
\Def_{CS} \simeq \C^{\geq 1}_{\Lie}(\g)[3].
\]
In other words, the deformation complex is encoded by the {\em reduced} Lie algebra cohomology.
For $\g$ semisimple, we thus see that 
\begin{itemize}
\item $\H^1(\Def_{CS}) \cong \H^4(\g) = 0$, 
and hence there are no obstructions to BV quantization, and
\item $\H^0(\Def_{CS}) \cong \H^3(\g)$, 
so that each power in $\hbar$, we are free to modify the classical action by changing the pairing,
as we just described above.
\end{itemize}
Moreover, different choices of level determine distinct quantizations: 
there is no local symmetry that identifies them.

\subsubsection{${\rm SO}(3)$-equivariant quantization}

The following brief discussion is not needed for the central results of this paper,
but it may illuminate some facets of the situation.

It is natural to ask whether we can quantize preserving symmetries of the classical theory.
We examine here the question of whether the rotational symmetry under ${\rm SO}(3)$ lifts.
Note that we are not just asking whether, say, the algebra of observables is isomorphic under a rotation,
but how to specify that isomorphism.
From the perspective of higher algebra, we are asking whether the quantum observables can be lifted to an algebra over the operad of oriented (but not framed) $3$-disks, equivalently, a framed $\cE_3$-algebra.
A refinement of the deformation complex allows us to conclude that it can be lifted.
Under the pipeline described in the introduction,
this lift determines a ribbon structure on the category of line defects.

Let us note first an important issue: the distinction between a discrete and a smooth action,
to use the terminology of \cite{CG1}.\footnote{See Section 3.7 therein.  
This distinction is an avatar of a familiar distinction in topology, where there is a big difference between the classifying space ${\rm B}G$ of a Lie group $G$ and $B(G^\delta)$ of its underlying discrete group~$G^\delta$. Our terminology here is connected with functional analytic issues in our constructions.}
It is straightforward to construct quantum Chern--Simons theory on $\RR^3$ in a way that is manifestly equivariant with respect to ${\rm SO}(3)$ as a discrete group.
Simply use the gauge-fixing arising from the Euclidean metric on $\RR^3$,
which ensures that the (regularized) BV bracket and BV Laplacian are strictly ${\rm SO}(3)$-equivariant 
and so is RG flow.

On the classical theory, 
there is an infinitesimal action of $\mathfrak{so}(3)$,
whose classical Noether current~is
\[
J_\xi(A) = \int \langle A, \cL_\xi A \rangle
\]
where we identify an element $\xi \in \mathfrak{so}(3)$ with a vector field on $\RR^3$ 
and which hence has a canonical action by the Lie derivative $\cL_\xi$ on differential forms.
This action makes $\Obs^\cl$ a {\em smoothly} ${\rm SO}(3)$-equivariant factorization algebra 
(one obtains the desired derivation of the factorization algebra by bracketing with the local functional $J_\xi$).
What is less obvious is that one can quantize this smooth action.

Working equivariantly with respect to $\mathfrak{so}(3)$ corresponds working over the base ring $\C^*(\mathfrak{so}(3))$, as a filtered dg commutative algebra.
The equivariant classical observables are the factorization algebra $\C^*(\mathfrak{so}(3), \Obs^\cl)$ with values in filtered $\C^*(\mathfrak{so}(3)$-modules.
To quantize equivariantly is to solve the QME over the base ring $\C^*(\mathfrak{so}(3))$.\footnote{This process is well-defined and extensively developed in Chapter 13 of \cite{CG2},
although we do not use anything subtle here.}
One can examine the $\mathfrak{so}(3)$-equivariant deformation complex $\Def^{\mathfrak{so}(3)-eq}_{CS}$,
in the style of our discussion above.

\begin{lmm}
The $\mathfrak{so}(3)$-equivariant deformation complex of Chern--Simons theory on $\RR^3$ satisfies
\[
\Def^{\mathfrak{so}(3)\text{-eq}}_{CS} \simeq \C^{\geq 1}(\mathfrak{so}(3)) \otimes \C^{\geq 1}(\g)[3].
\]
\end{lmm}

The key observation is that the quasi-isomorphism $\CC \hookrightarrow \Omega^*(\RR^3)$ of the Poincar\'e lemma is $\mathfrak{so}(3)$-equivariant.
This identification means that the argument from Section 14, Chapter 5 of \cite{CosBook}, carries over.

\begin{crl}
When $\g$ is semisimple, there are no obstructions to $\mathfrak{so}(3)$-equivariant quantization as $\H^*(\mathfrak{so}(3)) = \CC \oplus \CC[-3]$.
\end{crl}

See \cite{EllGwi} for a treatment of such results for all topological AKSZ theories, including Chern--Simons theories, notably Proposition~1.4, which implies the lemma and its corollary.

\subsection{The factorization algebra of quantum observables}
\label{subsec: quantum observables}

We have described a family of BV quantizations of classical Chern--Simons theory,
and now we turn to understanding the associated factorization algebras.
Applying the main result of \cite{CG2}, Theorem 9.6.0.1,
we deduce the following result.

\begin{prp}
\label{prp: CS quant exists}
Each choice of level $\lambda \in \hbar \H^3(\g)[[\hbar]]$ determines a BV quantization of classical Chern--Simons theory on $\RR^3$, 
and hence it determines a factorization algebra $\Obs^\lambda$ with values in dg modules over $\CC((\hbar))$.
This factorization algebra reduces modulo $\hbar$ to a factorization algebra quasi-isomorphic to $\Obs^\cl$.
\end{prp}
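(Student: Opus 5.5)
The plan is to assemble Proposition~\ref{prp: CS quant exists} from three existing ingredients: the obstruction theory of \cite{CosBook}, the quantization-to-factorization-algebra machine of \cite{CG2}, and the identification of classical observables in Lemma~\ref{poin}.

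\textbf{Step 1: existence and parametrization of quantizations.} We first show that the level $\lambda$ determines a BV quantization. We use the deformation complex $\Def_{CS} \simeq \C^{\geq 1}(\g)[3]$ from Section~\ref{subsec: def cplx for CS}. We construct the quantization order by order in $\hbar$.
\begin{itemize}
\item Suppose a solution of the scale-dependent quantum master equation has been built modulo $\hbar^{k}$.
\item The obstruction to lifting it modulo $\hbar^{k+1}$ is a degree-$1$ cocycle in $\Def_{CS}$. Its class lies in $\H^4(\g)$, which vanishes because $\g$ is semisimple.
\item The set of lifts, up to homotopy, is then a torsor for $\H^0(\Def_{CS})\cong\H^3(\g)$.
\end{itemize}
An element of $\H^3(\g)$ is realized by the cubic invariant $\langle x,[y,z]\rangle_\mu$ attached to an invariant pairing $\mu$. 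This is exactly the effect of shifting the pairing in the Chern--Simons action. We therefore match the $\hbar^k$ component $\lambda_k$ of the level with the choice of lift at order $k$. This is the content of Theorem~\ref{thm: levels for CS}. To make the choice canonical rather than merely a torsor, we fix the explicit quantization with the Euclidean gauge fixing of the appendix (configuration-space / heat-kernel regularization) as the basepoint.

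\textbf{Step 2: from quantization to a factorization algebra.} Given the quantization, Theorem 9.6.0.1 of \cite{CG2} produces a factorization algebra $\Obs^\lambda$ on $\RR^3$ of quantum observables. On an open $U$ it is modeled as $\C^\#(\g^U)[\![\hbar]\!]$ with differential $\d^{\cl} + \{I[L],-\}_L + \hbar\Delta_L$, where $I[L]$ is the scale-$L$ effective interaction and $\{-,-\}_L$, $\Delta_L$ are the regularized bracket and BV Laplacian at scale $L$. This differential squares to zero because $I[L]$ satisfies the quantum master equation. The structure maps come from the support conditions on observables. We take values over $\CC(\!(\hbar)\!)$ after inverting $\hbar$, as in the statement; the $\CC[\![\hbar]\!]$-lattice is already visible in this model.

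\textbf{Step 3: the classical limit, and the expected obstacle.} Setting $\hbar=0$ kills $\hbar\Delta_L$, and $\{I[L],-\}_L$ reduces to the classical bracket term. The result is the classical observables built from scale-$L$ functionals. The homotopy RG-flow equivalences of \cite{CG2} identify this with $\Obs^\cl$. The main care needed is here: we must check that the reduction is a quasi-isomorphism of \emph{factorization algebras}, not merely open set by open set. I would argue this by showing the comparison maps between scales commute with the structure maps, and then invoking the spectral sequence for the $\hbar$-adic filtration to reduce everything to the classical statement.
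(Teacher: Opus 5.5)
Your proposal is correct and follows essentially the paper's route: the paper gets the quantizations from the obstruction theory of \cite{CosBook} (Theorem~\ref{thm: levels for CS}, using $\H^4(\g)=0$, $\H^0(\Def_{CS})\cong\H^3(\g)$, and the explicit configuration-space quantization of the appendix), and then simply cites Theorem 9.6.0.1 of \cite{CG2} both for the factorization algebra and for its identification with $\Obs^\cl$ modulo $\hbar$. So the obstacle you flag in Step 3 is already covered by that citation, and no $\hbar$-adic spectral sequence is needed for a comparison that takes place at $\hbar=0$; also, in Step 2 the differential should read $Q+\{I[L],-\}_L+\hbar\Delta_L$ with $Q$ the linear part, since $\d^\cl$ already contains the classical interaction.
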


As mentioned in the introduction, 
quantization produces a deformation as a {\em filtered} $\cE_3$-algebra.
Let us identify the relevant filtration at the level of the factorization algebra first.

Consider the underlying graded vector space of the quantum observables $\Obs^\lambda(\RR^3)$:
\[
\prod_{m,n \geq 0}  \hbar^m \Sym^n((\Omega^*(M)^\vee \otimes \fg^\vee)[-1]).
\]
There are three apparent gradings here: by cohomological degree, by power of $\hbar$, and by symmetric power.
The differential (for any choice of parametrix or scale) interacts with these gradings by preserving or increasing the power of $\hbar$ but possibly lowering the symmetric power by at most two.
This decrement is due solely to the BV Laplacian.

\begin{dfn}
\label{dfn: obs filtration}
Equip the quantum observables with a filtration where
\[
F^k \Obs^\lambda = \prod_{2m + n \geq k} \CC \hbar^m \otimes \Sym^n((\Omega^*(M)^\vee \otimes \fg^\vee)[-1]),
\]
which is preserved by the differential.
\end{dfn}

This filtration is compatible with support conditions and with structure maps, and hence it extends to the whole factorization algebra.

One compelling feature of this filtration is that the associated graded factorization algebra 
is the quantum observables for {\em abelian} Chern--Simons theory.
More accurately, it encodes Chern--Simons theory for the abelian Lie algebra $|\g|$,
by which we mean the underlying vector space of $\g$ equipped with the trivial bracket.
To see this, note that the differential on the associated graded is $\d_{dR} + \hbar \Delta$,
since the filtration kills contributions that increase the symmetric power 
(e.g., the term in the differential arising from the Lie bracket of~$\g$).

To sum up, we have the following.

\begin{lmm}
The quantum observables $\Obs^\lambda$ form a locally constant factorization algebra with values in complete filtered dg $\CC[[\hbar]]$-modules in ${\rm DVS}$.
Hence, they determine a filtered $\cE_3$-algebra.
\end{lmm}

\begin{proof}
It suffices to show that the associated graded---i.e., the the quantum observables for abelian Chern--Simons theory---is locally constant.
But here we can use a spectral sequence argument. 
For any inclusion $i: D \hookrightarrow D'$ of a small disk into a bigger disk,
there is a structure map associated to $i$ for the associated graded of observables.
Consider now the filtration by powers of $\hbar$.
Then there is a map of spectral sequences arising from the structure map for $i$,
and it is an isomorphism on the first page, as the {\em classical} observables for abelian Chern--Simons theory is locally constant.
\end{proof}

\subsection{Quantization yields a nontrivial deformation}
\label{subsec: nontrivial def}

It remains to show that this $\cE_3$ deformation is nontrivial.
We offer two arguments that exemplify some different features of our methods.
Both rely, however, on the fact that the associated graded of $\Obs^\lambda$ corresponds to abelian Chern--Simons theory,
which is a free field theory and hence can be analyzed in depth.
We will refer freely to the analysis provided in Section 4.5 of \cite{CG1} of abelian Chern--Simons theory.

\subsubsection{Reduction along a torus}

One approach works by shifting the problem to the more familiar domain of associative algebras,
using the pushforward of factorization algebras, 
which is a version of dimensional reduction.
Basically, we will show that the dimensional reduction of quantized Chern--Simons theory 
along a closed, oriented 2-manifold $\Sigma$ 
produces a nontrivial deformation quantization of the dimensionally-reduced classical observables.
We will focus here on the torus $T = S^1 \times S^1$ for simplicity.

Recall that for a map $\pi: X \to Y$, 
a factorization algebra $\cF$ on $X$ pushes forward to a factorization algebra $\pi_* \cF$
by the formula 
\[
\pi_* \cF(U) = \cF(\pi^{-1}U).
\]
Hence, for a locally constant factorization algebra $\cF$ on $\RR \times \Sigma$,
the pushforward $\pi_* \cF$ along the projection $\pi: \RR \times \Sigma \to \RR$
yields a locally constant factorization algebra on $\RR$,
which is equivalent to some $\cE_1$-algebra.

If $\cF$ is commutative, then the pushforward is also commutative.
If $\cF$ is not commutative, then the image need not be commutative.
Thus, the following result will verify that $\cA^\lambda$ is a nontrivial $\cE_3$ deformation of~$\C^*(\g)$.

\begin{lmm}
\label{lmm: push to torus}
The pushforward $\pi_* \Obs^\lambda$ along the projection $\pi: T \times \RR \to \RR$ yields a locally constant factorization algebra
whose associated $\cE_1$-algebra is a noncommutative deformation of~$\pi_*\Obs^\cl$.
\end{lmm}

At the level of cohomology, this relationship is already clear, as our arguments will show. 
Let $\alpha$ and $\beta$ be cycles that generate $\H_1(T,\RR)$.
The graded commutative algebra $\H^*(\pi_*\Obs^\cl)$ is then the Lie algebra cohomology
\[
\H^*(\g[\alpha,\beta]) \cong \H^*(\g) \otimes \csym( \g^\vee \oplus \g^\vee)^\g \otimes \Sym(\g^\vee[1])^\g,
\]
which has a nontrivial Poisson bracket of degree 0.
This algebra is a graded extension of the more familiar Poisson algebra $\csym( \g^\vee \oplus \g^\vee)^\g$,
which is the completed algebra of functions around the trivial connection on the character stack of the torus.
A BV quantization will yield a deformation quantization of this Poisson algebra.

\begin{proof}
For abelian Chern--Simons theory and on an arbitrary closed Riemann surface,
this lemma is Proposition 4.5.2 of \cite{CG1}.
Now consider $\g$ nonabelian.
Due to the filtration at hand (of Definition~\ref{dfn: obs filtration}),
$\pi_* \Obs^\lambda$ encodes a filtered $\cE_1$-algebra,
and its associated graded is the $\cE_1$-algebra of the abelian Chern--Simons theory with Lie algebra $|\g$.
This associated graded algebra is a nontrivial deformation of classical observables for this abelian Chern--Simons theory. 
Hence $\pi_* \Obs^\lambda$ is not filtered-equivalent to $\pi_* \Obs^\cl$ for nonabelian~$\g$.
\end{proof}

\subsubsection{Computing the $\cP_3$ bracket}

Recall that the cohomology of an $\cE_3$-algebra $\cA$ is a shifted Poisson algebra 
whose bracket has degree~2. 
Hence another approach to showing the BV quantization produces a nontrivial $\cE_3$ deformation of $\C^*(\g)$ is to directly determine the shifted Poisson bracket on $\H^* \cA^\q_\lambda$ and verify it is nontrivial. 

\begin{lmm}
\label{lmm: nontrivial P2 bracket}
To first order in $\hbar$,
the $-2$-shifted Poisson bracket on $\C^*(\g)[[\hbar]]$ induced by
the $\cE_3$ algebra structure on $\cA^\q_\lambda$ is determined by the pairing $\lambda$ modulo~$\hbar$ on linear observables, extended as a biderivation.
\end{lmm}

To show this, we will use a computation that is a version of the physical notion of {\em descent} 
as used in topological field theory. 
See \cite{BBBDN} for pertinent discussion of this construction in physical language;
see \cite{EllSaf}, notably Section~2.4, for a careful treatment for how $\cE_n$ algebras (in operadic style) are encoded in the observables of TFTs like Chern--Simons theory.
In brief, descent gives a way of constructing observables (or operators).
Let us sketch how it works in our situation.

We use the term {\em local observable} to mean an observable with support on a disk:
the local observables for the classical theory are thus $\Obs^\cl(D)$ for some disk $D$.
Note that local constancy assures us these are quasi-isomorphic for any choice of disk.
Indeed, we have seen the classical local observables are quasi-isomorphic to~$\C^*(\g)$.
On the other hand, given a submanifold $S \subset \RR^3$ and an element $\alpha$ of $\C^*(\g)$,
it is possible to produce an observable 
\[
\cO_{S,\alpha} \in \Obs^\cl(U)
\]
for any open $U \supset S$, as follows.

The first step is to promote such an $\alpha$ to a functional $\widetilde{\alpha}$ on the dg Lie algebra $\Omega^*(\RR^3) \otimes \g$ {\em with values in $\Omega^*(\RR^3)$}.
There are two ways to do this, one smart and the other not;
we describe both, starting with the approach that doesn't work well.

Consider the map of commutative dg algebras $\CC \to R = \Omega^*(\RR^3)$ embedding numbers as constant functions on $\RR^3$.
The dg Lie algebra $\Omega^*(\RR^3) \otimes \g= R \otimes \g$ is the Lie algebra obtained by base change along this map.
This relationship leads to an interesting property of the Lie algebra cochains.
First, note that viewing $R \otimes \g$ as a dg Lie algebra over the base ring $R$,
there is a dg commutative $R$-algebra $\C^*_{R}(R \otimes \g)$ of Lie algebra cochains over $R$.
On the other hand, there is also the dg commutative $R$-algebra $R \otimes_\CC \C^*(\g)$.
These are quite similar but not equal.
There is an isomorphism of underlying graded algebras
\begin{equation}
\label{basechange}
\C^\#_{R}(R \otimes \g) \cong R^\# \otimes_\CC \C^\#(\g)
\end{equation}
but the differentials differ.

We wish to produce a map
\[
\widetilde{}: \C^*(\g) \to \C^*_{R}(R \otimes \g)
\]
that promotes $\alpha$ to~$\widetilde{\alpha}$.
There is an obvious guess: simply send $\alpha$ to $1_R \otimes \alpha$ using the isomorphism~\eqref{basechange}.
This map has a serious drawback: it does not respect the differential on $\C^*_{R}(R \otimes \g)$.
It is possible, however, to fix this problem, which is the essence of descent.

Consider the endomorphism of $\Omega^*(\RR^3)$ given by $\iota_{\partial_i}$,
namely contraction with the vector field $\partial_i$.
Cartan's formula tells us that $L_{\partial_i} = [\d, \iota_{\partial_i}]$.\footnote{In the notation of \cite{BBBDN}, we have $Q = \d$, the de Rham derivative, $P_\mu = \partial_\mu$, and $Q_\mu = \iota_{\partial_\mu}$.
For a systematic comparison with $\cE_n$ algebras, see the proof of Theorem~2.23 in~\cite{EllSaf}.} 
Consider the ``number operator'' $N = \sum_{i=1}^3 \d x_i \,\iota_{\partial_i}$, which is a kind of number operator
because $N(\omega) = k \omega$ for any $k$-form $\omega$.
This operator $N$ extends to $\Omega^*(\RR^3) \otimes \g$ by acting as the identity on the Lie algebra,
and hence it also extends to an operator on functions on that Lie algebra.

\begin{dfn}
For $\alpha \in \C^*(\g)$, its {\em total descendent}~is
\[
\widetilde{\alpha} = e^{N}(\alpha) = \sum_{j=0}^\infty \frac{1}{j!}N^j(\alpha),
\]
an element of $\C^*_{\Lie, \Omega}(\Omega^*(\RR^3) \otimes \g)$.
\end{dfn}

Direct computation shows that this map $\alpha \mapsto \widetilde{\alpha}$ is a cochain map.

This construction does not yet produce an observable:
$\widetilde{\alpha}$ takes as input a field $A$ from $\Omega^*(\RR^3) \otimes \g$ and returns as output a differential form $\widetilde{\alpha}(A)$.
An observable would return, by contrast, a number.
To obtain an observable, we pick a linear functional on differential forms, i.e., a de Rham current.
Concretely, view a closed submanifold $S \subset \RR^3$ as the de Rham current sending a form $\omega$ to~$\int_S \omega$.

\begin{dfn}
For $\alpha \in \C^*(\g)$ and $S \subset \RR^3$, set
\[
\cO_{S,\alpha} = \int_S \widetilde{\alpha},
\]
an element of $\Obs^\cl(U)$ for any open set $U$ containing~$S$.
\end{dfn}

We call $\cO_{S,\alpha}$ an observable {\em supported on $S$ and descended from~$\alpha$}.

\begin{rmk}
We only spelled out the case where $\alpha \in \C^*(\g)$, but one can more generally take $\alpha \in \Obs^\cl(D)$ and obtain an element $\widetilde{\alpha} \in \Omega^*(\RR^3, \Obs^\cl(D))$ and hence an observable $\cO_{S,\alpha}$.
In other words, one can descend observables supported in some disk to an observable along an interesting submanifold,
by the formula using the Euler operator.
This process continues to work for quantum observables, so long as one has explicitly lifted operations like $\iota_{\partial_i}$ to the quantum level.
\end{rmk}

Let us now use this construction to witness a nontrivial shifted Poisson bracket on the quantum observables.

The $\cE_3$ operad has a space of binary operations homotopic to~$S^2$,
arising from configurations of two disks in larger disk.
In particular, the dg operad $\C_*(\cE_3)$ has a nontrivial class in degree~$-2$, corresponding to the fundamental class of~$S^2$.
Thus, the Poisson bracket arises by descending an operator along a representative of this~$S^2$ and taking its product with an operator inside the~$S^2$.

In explicit terms, we do the following.
Fix two small disks $D \subset D'$ and a sphere $S^2 \subset D' \setminus D$, winding around $D$ but contained in $D'$.
Fix an open neighborhood $U \supset S^2$ disjoint from $D$ but contained in $D'$.
Given two observables $\alpha, \beta$ in the disk $D$, we can descend $\alpha$ to $S^2$ and then use the factorization product to obtain an observable on the disk~$D'$:
take the composition
\[
\begin{array}{ccccc}
\Obs(D) \otimes \Obs(D) & \to & \Obs(U) \otimes \Obs(D) & \to & \Obs(D')\\
\alpha \otimes \beta & \mapsto & \cO_{S,\alpha} \otimes \beta & \mapsto & \cO_{S,\alpha} \beta
\end{array}.
\]
At the level of cohomology, this composite induces the shifted Poisson bracket:
\[
\{[\alpha], [\beta] \} = [\cO_{S,\alpha} \beta].
\]
It is trivial for $\Obs^\cl$ (since it is truly commutative) but we will see that it is nontrivial for~$\Obs^\q$.

\begin{proof}[Proof of Lemma~\ref{lmm: nontrivial P2 bracket}]
We describe this computation only for abelian Chern--Simons theory,
since the nonabelian quantum observables have a filtration whose $E_1$ page is the abelian gauge theory, 
replacing $\g$ by its underlying vector space, viewed as an abelian Lie algebra.
These abelianized observables know the first order Poisson bracket by Lemma~10.3.3 of~\cite{CG2}.

Let us focus on the case $\g = \g_a$, the 1-dimensional Lie algebra,
which is abelian.
Let $x$ denote a generator, so $\g_a = \CC x$,
and consider the cocycle $\alpha = x^\vee$ in $\C^*(\g_a) = \CC[x^\vee]$.
Then $\widetilde{\alpha}$ sends any field $\omega \otimes x$ to $\omega$.
For any choice of closed submanifold $S \subset \RR^3$,
we have
\[
\cO_{S,\alpha}(\omega \otimes x) = \int_S \omega.
\]
This observable is a linear functional of the fields, and it is a cocycle because $S$ has no boundary.

We note that $\alpha$ is also a cocycle for the quantum differential,
since it is linear and hence must be annihilated by the BV Laplacian.

A particularly simple case is $S = \{0\}$, the point at origin. 
Let us compute the putative bracket of $\cO_{\{0\}, \alpha}$ with itself.
To do this, consider the disks 
\[
D = \{ |v| < 1/2\} \subset D' = \{|v| < 1\},
\]
the sphere
\[
S = \{ |v| = 3/4\}, 
\]
and the open neighborhood
\[
U = \{ 5/8 < |v| < 7/8\}.
\]
To compute the bracket, we wish to identify the cohomology class of $\cO_{S,\alpha} \cO_{\{0\},\alpha}$ in~$\Obs^\q(D')$.

Note that $S$ is the boundary of the closed disk $\overline{D} = \{|v| \leq 3/4\}$,
and so there is a 3-current $\mu = \int_{\overline{D}}$ such that $\d_{dR} \mu = \int_S$.
We thus find that
\begin{align*}
\d^\q(\cO_{\overline{D},\alpha} \cO_{\{0\},\alpha}) 
&= (\d_{dR} + \hbar \Delta)(\cO_{\overline{D},\alpha} \cO_{\{0\},\alpha}) \\
&= \cO_{S,\alpha} \cO_{\{0\},\alpha} + \hbar,
\end{align*}
where the juxtaposition $\cO_{S,\alpha} \cO_{\{0\},\alpha}$ denotes the quadratic element (namely the classical product).
Hence the cohomology of  $\cO_{S,\alpha} \cO_{\{0\},\alpha}$ in $\Obs^\q(D')$ is~$\hbar$.
In particular, it is nonzero, so that we obtain a nontrivial bracket.
\end{proof}

\section{Interlude on the charged 1-dimensional free fermion}
\label{sec: ff}

\def\ff{{\rm ff}}
\def\Ber{{\rm Ber}}

We wish to produce a Reshetikhin--Turaev theory whose $\cE_3$-algebra is $\cA^\lambda$,
which we just constructed using Chern--Simons theory.
We need to produce a $\cE_1$-algebra to label the tangles;
that is, we need the other piece of a framed $\cE_{1 \subset 3}$-algebra.
As a first step, this section exhibits a $\cE_1$-algebra of the form $\End_{\cA^\lambda}(\cV)$
for some perfect module $\cV$.
Our approach is to construct a 1-dimensional field theory whose factorization algebra has the appropriate form.
Explicitly, we work with a fermion living in a $\g$-representation,
i.e., a fermion with nontrivial $\g$-charge.

The main result of this section is the following.
It assembles Lemma~\ref{lmm: g-eq clifford alg}, Lemma~\ref{lmm: iso to hoch}, and Proposition~\ref{prp: character} into a single statement.

\begin{prp}
\label{prp: main ff}
Let $\rho: \g \to \End(V)$ be a finite-dimensional representation of a semisimple Lie algebra~$\g$.
The 1-dimensional fermion with values in $V$ admits a BV quantization
whose factorization algebra of quantum observables is equivalent to the super algebra given by the Lie algebra cohomology
\[
\C^*(\g,\Cl(V \oplus V^*)) \cong \C^*(\g, \End(\SS_\rho)),
\]
where $\SS_\rho$ denotes $\Sym (\Pi V)$ as the spinor representation of $\Cl(V \oplus V^*)$ and $\g$ acts through the canonical map 
\[
\g \to \mathfrak{so}(V \oplus V^*) \hookrightarrow \Cl(V \oplus V^*).
\]
Thus it yields an $\cE_{0 \subset 1}$ algebra from its observables.

Furthermore, the global sections on a circle have a distinguished element
given by the partition function of this $\rho$-charged fermion over a circle,
which manifestly encodes the holonomy of a flat $\fg$-connection on the spinor bundle.
That is, the partition function is
\[
\hbar^{-2\dim(V)}{\rm ch}_{\SS_\rho} \in \hh^0(\C^*(\g))[\hbar^{\pm 1}],
\] 
which is, up to a power of $\hbar$, the character ${\rm ch}_{\SS_\rho}$ of the spinor representation $\SS_\rho$ of~$\g$.
\end{prp}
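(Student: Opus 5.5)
Since the statement assembles three results, the plan has three matching parts: the quantization, the identification of observables, and the partition function.

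\emph{Quantization.} Treat the $\rho$-charged fermion as a 1-dimensional BV theory coupled to a \emph{background} flat $\g$-connection. Concretely, I would work over the base $\C^*(\g)$, or more invariantly over $\C^*(\g^{\RR})$ with $\g^\RR=\Omega^*(\RR,\g)$. The fields are $\Omega^*(\RR)\otimes(\Pi V\oplus\Pi V^*)$, shifted so that the pairing $\int\langle\psib,\psi\rangle$ has degree $-1$. The action is $\int\langle\psib,(\d+A)\psi\rangle$, where $A$ is the background field.

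This is a free theory in the fermions, with a background coupling that is linear in each fermion. Its Feynman graphs are therefore only trees and single wheels with $A$-insertions. I would regularize with the heat-kernel (or $\d^*$) gauge fixing on $\RR$, as in \cite{CosBook}. One-loop wheels in dimension one are finite, so the only possible anomaly is a cocycle in the equivariant deformation complex, essentially $\C^{*}(\g)$ in low degree. Its obstruction class would be the $\d$-variation of the wheel, which vanishes by a direct check in one dimension (the propagator is a step function). This gives existence of the quantization, which is Lemma~\ref{lmm: g-eq clifford alg}.

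\emph{Identifying the observables.} I would filter by polynomial degree in $A$. On the associated graded (with $A=0$), the observables on an interval are the CG1 free-fermion answer: the Weyl/Clifford algebra $\Cl(V\oplus V^*)$, via the standard computation for 1d free theories as in \cite{CG1} (topological quantum mechanics). Turning on $A$ adds the action of $\g$ through the quadratic Noether current $\psib\rho(x)\psi$. This current realizes $\g\to\mathfrak{so}(V\oplus V^*)\subset\Cl$, so the result is $\C^*(\g,\Cl(V\oplus V^*))$, with local constancy giving the $\cE_1$-algebra. The spinor module $\SS_\rho$ is irreducible for the Clifford algebra, which gives the isomorphism $\Cl\cong\End(\SS_\rho)$ $\g$-equivariantly.

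\emph{Circle and partition function.} Factorization homology of the locally constant algebra over $S^1$ is Hochschild homology. Morita invariance then gives
\[
\hh_*(\C^*(\g,\End\SS_\rho))\simeq\hh_*(\C^*(\g)),
\]
which is Lemma~\ref{lmm: iso to hoch}. Finally, I would compute the partition function on $S^1$ as the BV pushforward of the constant observable to global sections. Since the theory is Gaussian in the fermions, this is a super-determinant, i.e.\ the Berezinian of $\d+A$ on $\Omega^0(S^1)\otimes V$. By zeta-regularization or heat-kernel arguments, that Berezinian equals $\det(1-\mathrm{Hol}_A)$ up to normalization. Its expansion in the formal neighborhood of the trivial connection gives $\mathrm{str}_{\SS_\rho}\mathrm{Hol}=\mathrm{ch}_{\SS_\rho}$ as an element of $\hh^0(\C^*(\g))\cong\csym(\g^\vee)^\g$, matching the $S^1\times\RR^2$ computation in \S\ref{subsec:obscl special values}. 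The factor $\hbar^{-2\dim V}$ comes from the zero modes: $\dim V$ zero modes each of $\psi,\psib$ on the circle, with a Berezin measure normalized by $\hbar$.

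\emph{Main obstacle.} I expect the hardest step to be this last identification (Proposition~\ref{prp: character}). It requires fixing the normalization and the periodic versus antiperiodic spin structure so that one gets the supertrace character rather than $\det(1-\mathrm{Hol})$ with the wrong sign or power. It also requires showing the class is independent of the regularization scale. My first approach would be to reduce to the Cartan (diagonal) case by $\g$-invariance and compute mode by mode.
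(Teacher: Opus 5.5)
Your proposal is correct and follows essentially the paper's route. The ingredients match:
\begin{itemize}
\item one-loop exactness of the charged fermion (trees plus wheels), where the paper's QME defect is $\hbar\Delta I_0$, which depends only on the background field and vanishes because $\tr\rho(X)=0$ for semisimple $\g$;
\item the identification of the observables with $\C^*(\g,\Cl_\hbar(V\oplus V^*))\cong\C^*(\g,\End(\SS_\rho))$ via the free-fermion/Heisenberg construction over the base $\C^*(\g)$;
\item Morita triviality to pass to $\hh_*(\C^*(\g))[\hbar^{\pm 1}]$ on the circle.
\end{itemize}

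The only difference is presentational and lies in the character step. Where you take a zeta-regularized Berezinian of $\d+A$ on the periodic circle, the paper evaluates $e^{I^\q/\hbar}$ on harmonic fields. There the tree term gives the zero-mode factor $\det\rho(X)$, and the wheel sum (weights $\zeta(2k)$) gives $\det(e^{\rho(X)/2}-e^{-\rho(X)/2})/\det\rho(X)$; together these are exactly the mode-by-mode expansion of your determinant. Your normalization worry is settled by $\tr\rho(X)=0$, which makes both $\det e^{\rho(X)/2}$ and the vacuum weight of $\SS_\rho$ trivial.
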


Below we explain the players (such as the fermion and the Clifford algebra) in detail, 
but the key point is that we have a 1-dimensional theory whose observables encode the endomorphism algebra of a perfect $\C^*(\g)$-module, 
namely
\[
\End_{\C^*(\g)}(\C^*_{\Lie}(\g, \SS_\rho)),
\]
which is an equivalent expression to that above.
(We remark that we have set $\hbar = 1$ here for convenience, 
but we will track it in the work below, as it plays an important role in the Reshetikhin--Turaev theory.)

\subsubsection{Notations}

As we are working with fermions, we will need to work with super vector spaces, super cochain complexes, and so on.
If $V$ is an ordinary finite-dimensional vector space, 
let $\Pi V$ denote the associated purely odd vector space.
We use $\Sym(\Pi V)$ to denote the free commutative super algebra on $\Pi V$;
it looks like the exterior algebra $\Lambda V$,
except we view all odd wedge powers as being odd vectors.
(We reserve the notation $\Lambda V$ for the associative super algebra that is purely even.) 

Given a vector space $W$ with bilinear form $B$, 
we use $\Cl(W,B)$ to denote the Clifford algebra viewed as a super algebra.
That is, $\Cl(W,B)$ is the deformation of $\Sym(\Pi W)$ such that
\[
w w' + w' w = B(w,w')
\] 
for any $w,w' \in \Pi W$.
We use $\Cl(V \oplus V^*)$ to denote the case of $W = V \oplus V^*$ with $B$ the evaluation pairing.

There is a natural filtration on $\Cl(W,B)$ lifting that on $\Sym(\Pi W)$.
(Note that the grading does not lift.)
Let $\Rees(\Cl(W,B))$ denote the Rees algebra with respect to this filtration,
which is an algebra over the polynomials $\CC[\hbar]$.
Let $\Cl_\hbar(W,B)$ denote the localization by inverting $\hbar$,
which is an algebra over the Laurent polynomials~$\CC[\hbar,\hbar^{-1}]$.

\subsection{The 1-dimensional free fermion}

This section treats a fermion that is uncoupled to a gauge theory.
Its role is to develop the methods used in proving Proposition~\ref{prp: main ff} in the simplest relevant setting.

\subsubsection{The theory and its observables}

Let $V$ be a finite-dimensional vector space of dimension $d$.
Let $L$ be a 1-dimensional manifold without boundary.
We introduce now the fermionic theories of interest to us.

\begin{dfn}
The {\em free fermion} on $L$ associated to $V$ has fields
\[
\psi \in \cinf(L) \otimes \Pi V \quad\text{and}\quad \psib \in \cinf(L) \otimes \Pi V^*
\]
and action functional
\[
S_{\rm ff}(\psi,\psib) = \int_L (\psib, \d \psi)
\]
where $(-,-)$ denotes the evaluation pairing between $\Pi V$ and $\Pi V^*$.
The equations of motion are thus $\d \psi = 0 = \d \psib$.
\end{dfn}

This description is the usual one, but we will always work within the Batalin--Vilkovisky formalism and from hereon we include the antifields (i.e., 1-forms on $L$ with values in $\Pi V \oplus \Pi V^*$) as part of the data of the theory.

This theory is a free theory, and the methods of \cite{CG1} apply straightforwardly to obtain the following.
(Here by methods, we mean the shifted Heisenberg construction of the BV quantization combined with the arguments showing that the factorization algebra for the 1-dimensional free scalar field recovers the Weyl algebra, notably Section 4.3 of~\cite{CG1}.)

\begin{lmm}
\label{lmm: free fermion observables}
For $L = \RR$, the factorization algebra of classical observables is weakly equivalent to the factorization algebra associated to the super algebra $\Sym\, \Pi (V \oplus V^*)$.
The factorization algebra of quantum observables $\Obs^\q_{\rm ff}$ is weakly equivalent to the factorization algebra associated to the super algebra $\Cl_\hbar(V \oplus V^*)$.
\end{lmm}

\begin{proof}
This claim follows by recognizing that the Rees algebra of the Clifford algebra is the enveloping algebra of the super Lie algebra $Heis(\Pi (V \oplus V^*)$ obtained by centrally extending $\Pi (V \oplus V^*)$ via the evaluation pairing.
We can then apply the results that recover such enveloping algebras from factorization algebra constructions.
In detail, the quantum observables $\Obs^\q_{\rm ff}$ is equivalent to the twisted enveloping factorization algebra of the sheaf of dg Lie algebras $\Omega^*_\RR \otimes \Pi (V \oplus V^*)$, 
where the twist is determined by the Lie algebra cocycle 
\[
(\omega \otimes w, \omega' \otimes w') \mapsto \int_\RR \omega \wedge \omega' (w,w'),
\]
where $w,w' \in \Pi (V \oplus V^*)$ and the bracket means the evaluation pairing.
Now apply Proposition 3.4.1 of \cite{CG1}.
(See Sections 4.2 and 4.3 in \cite{CG1} for an extended discussion of this perspective.)
\end{proof}

This identification has important consequences when $L = S^1$.
When a factorization algebra on $\RR$ is locally constant, and hence can be identified with an $E_1$ algebra, 
the factorization homology on the circle is identified with the Hochschild chains of that algebra.
Hence, there is a natural but abstract equivalence
\[
\H^* \Obs^\q_{\rm ff}(S^1) \cong {\rm {\rm HH}}_*(\Cl_\hbar(V \oplus V^*)).
\]
Thanks to the explicit nature of our construction, however, we can give an explicit map
after choosing a Riemannian metric on~$S^1$. 

\subsubsection{Global observables on a circle}

Our goal in this subsection is to prove the following.

\begin{lmm}
\label{lem free obs to hoch}
A choice of Riemannian metric $g$ on $S^1$ determines a natural quasi-isomorphism
\[
\Obs^\q_{\rm ff}(S^1) \xto{\simeq} {\rm Hoch}_*(\Cl_\hbar(V \oplus V^*)))
\]
in such a way that we have an explicit cochain homotopy for any choice of path between two metrics.
\end{lmm}

One can give a very short proof by invoking the literature, 
but we will describe in detail the key steps as we will use the same techniques in constructing our  main result.

\begin{proof}
Let $g$ denote a Riemannian metric on $S^1$, and
let $\triangle$ denote the associated Laplacian.
By spectral theory, we get a decomposition $\cinf(S^1) = \cH \oplus \cH^\perp$ into the harmonic fields (i.e., $f$ such that $\triangle f = 0$) and a continuous complement on which $\triangle$ is an isomorphism.
Let $\triangle^{-1}: \cinf(S^1) \to \cinf(S^1)$ denote the extension to all smooth functions of the inverse to $\triangle|_{\cH^\perp}$.
We then obtain a Green's function for the de Rham complex by 
\[
G =\d^*\triangle^{-1}: \Omega^1(S^1) \to \Omega^0(S^1).
\]
Note that this operator varies continuously as we vary the underlying metric.

A key role is then played by the cochain homotopy equivalence of Hodge theory:
\[
\begin{tikzpicture}[baseline=(L.base),anchor=base,->,auto,swap]
     \path node (L) {$\CC[\epsilon]$} ++(2,0) node (M) {$\Omega^*(S^1)$} 
     (M.mid) +(0,.075) coordinate (raise) +(0,-.075) coordinate (lower);
     \draw (L.east |- lower) to node {$\scriptstyle \iota$} (M.west |- lower);
     \draw (M.west |- raise) to node {$\scriptstyle \pi$} (L.east |- raise);
     \draw (M.south east) ..controls +(1,-.5) and +(1,.5) .. node {$\scriptstyle G$} (M.north east);
\end{tikzpicture}
\]
Here $\CC[\epsilon]$ is to be understood as the de Rham cohomology of $S^1$, 
where $\epsilon$ is the metric volume form.
This equivalence allows us to transfer constructions on all fields to constructions on the much smaller $(\Pi V \oplus \Pi V^*)[\epsilon]$.

For instance, let us apply these tools to the classical observables (i.e., work modulo $\hbar$),
which are simpler.
In that case, 
\[
\Obs^{\cl}_{\rm ff}(S^1) = \Sym(\Omega^*(S^1) \otimes \Pi (V \oplus V^*)[1])
\]
and so the differential preserves symmetric powers since it is just the symmetrization of the exterior derivative. 
The cochain homotopy equivalence of Hodge theory
extends to a cochain homotopy equivalence
\begin{equation}\label{eqn: classical obs hkr}
\begin{tikzpicture}[baseline=(L.base),anchor=base,->,auto,swap]
     \path node (L) {$\Sym(\Pi (V \oplus V^*)[\epsilon][1])$} ++(4,0) node (M) {$\Obs_\ff^{\cl}(S^1) $} 
     (M.mid) +(0,.075) coordinate (raise) +(0,-.075) coordinate (lower);
     \draw (L.east |- lower) to node {$\scriptstyle \iota^\cl$} (M.west |- lower);
     \draw (M.west |- raise) to node {$\scriptstyle \pi^\cl$} (L.east |- raise);
     \draw (M.south east) ..controls +(1,-.5) and +(1,.5) .. node {$\scriptstyle G^\cl$} (M.north east);
\end{tikzpicture}
\end{equation}
where we add the superscript $\cl$ to $\pi$, $\iota$, and $G$ for the induced maps.
We can postcompose with the Hochschild--Kostant--Rosenberg map to get a quasi-isomorphism
\begin{equation}\label{eqn: classical obs hkr qiso}
\Obs_{\rm ff}^{\cl}(S^1) \xto{\simeq} {\rm Hoch}_*(\Sym(\Pi (V \oplus V^*)).
\end{equation}
The quantum case is a deformation of this classical situation.

Indeed, quantization of $\Obs^\cl$ simply means that we add the BV Laplacian to the differential.
(For a detailed discussion of these issues---including an interpretation of Wick's lemma in this setting---see Chapter 2 of~\cite{GwThesis}.)
In this setting we can apply the homological perturbation lemma to deform (\ref{eqn: classical obs hkr}):
\begin{equation}\label{eqn: quantum obs hkr}
\begin{tikzpicture}[baseline=(L.base),anchor=base,->,auto,swap]
     \path node (L) {$(\Sym(\Pi (V \oplus V^*)[\epsilon][1])[[\hbar]], \d^q)$} ++(4,0) node (M) {$\Obs_\ff^{\q}(S^1) $} 
     (M.mid) +(0,.075) coordinate (raise) +(0,-.075) coordinate (lower);
     \draw (L.east |- lower) to node {$\scriptstyle \iota^\q$} (M.west |- lower);
     \draw (M.west |- raise) to node {$\scriptstyle \pi^\q$} (L.east |- raise);
     \draw (M.south east) ..controls +(1,-.5) and +(1,.5) .. node {$\scriptstyle G^\q$} (M.north east);
\end{tikzpicture}
\end{equation}
It is helpful to rephrase both sides of this deformation retraction in more convenient terms.

The observables of any free BV theory can be viewed as the Lie algebra {\em chains} of a shifted Heisenberg Lie algebra constructed from the linear observables,
as discussed in the proof of Lemma~\ref{lmm: free fermion observables}.
In our case, we have
\[
\Obs_\ff^{\q}(S^1) = \C^\Lie_*(\Omega^*(S^1) \otimes \Pi (V \oplus V^*) \ltimes \CC \hbar[-1])[\hbar^{-1}],
\]
where the shifted pairing on observables determines a Lie bracket 
$[\alpha,\beta] = \hbar \langle\alpha,\beta\rangle.$
As the circle is formal (in the sense of rational homotopy theory),
we obtain an equivalence of dg Lie algebras
\[
\Omega^*(S^1) \otimes \Pi (V \oplus V^*) \ltimes \CC \hbar[-1] 
\simeq (\Pi V \oplus \Pi V^*)[\epsilon]\ltimes \CC \hbar[-1].
\]
In consequence, we have a natural quasi-isomorphism
\begin{equation}\label{eqn: quantum obs hkr qiso}
\Obs^\q_\ff(S^1) \xto{\simeq} \C_*^\Lie((\Pi V \oplus \Pi V^*)[\epsilon]\ltimes \CC \hbar[-1]).
\end{equation}
The right hand side is manifestly a deformation of~(\ref{eqn: classical obs hkr qiso}).

Now, there is also a natural deformation of the HKR map for a deformation quantization.
In particular, the HKR map deforms to a quasi-isomorphism
\[
\C_*^\Lie((\Pi V \oplus \Pi V^*)[\epsilon]\ltimes \CC \hbar[-1])[\hbar^{-1}] \xto{\simeq} {\rm Hoch}_*(\Cl_\hbar(V \oplus V^*))).
\] 
Composing with the map (\ref{eqn: quantum obs hkr}), we obtain the claim.
\end{proof}

\subsubsection{Berezin integration in the BV framework}

The right hand side of (\ref{eqn: quantum obs hkr}) may not look appealing as written, 
but it is precisely the usual BV complex encoding finite-dimensional fermionic integration,
as we will now show.
This explicit identification will be useful later in recognizing character formulas in terms of Feynman diagrams.

Fix a basis $\{x_1,\ldots,x_d\}$ of $\Pi V$ and let $\{\bar{x}_i\}$ denote the dual basis for $\Pi V^*$.
Write $\xi_i$ for $\bar{x}_i \epsilon$ and $\bar{\xi}_i$ for $x_i \epsilon$.
Then the underlying graded algebra is
\[
\C_*^\Lie((\Pi V \oplus \Pi V^*)[\epsilon]\ltimes \CC \hbar[-1]) \cong \CC[x_i,\bar{x}_i,\xi_i,\bar{\xi}_i,\hbar]
\]
where, due to the shift in taking Chevalley chains, we view $x_i$ and $\bar{x}_i$ as having degree -1 and odd parity while $\xi_i$ and $\bar{\xi}_i$ have degree 0 and are odd parity.
The differential comes from the bracket on the shifted Heisenberg Lie algebra, and on these generators  it has the form
\[
\hbar \Delta = \hbar \sum_{i =1}^d \frac{\partial}{\partial x_i}\frac{\partial}{\partial \xi_i} +\frac{\partial}{\partial \bar{x}_i}\frac{\partial}{\partial \bar{\xi}_i}.
\]
In this format the complex is manifestly the BV complex for finite-dimensional fermionic integration.
Its cohomology is concentrated in degree 0, and it is spanned by the ``top fermion'' $\xi_1 \cdots \xi_d \bar{\xi}_1 \cdots \bar{\xi}_d$.
In particular, after inverting $\hbar$ (or setting it to 1), we find that there is a natural quasi-isomorphism
\[
\begin{array}{cccc}
\int_\Ber: & \CC[x_i,\bar{x}_i,\xi_i,\bar{\xi}_i,\hbar^{\pm 1}] &\to& \CC[\hbar^{\pm 1}]\\
&  \hbar^k \prod_i x_i^{m_i} \bar{x}_i^{\bar{m}_i} \prod_i \xi^{n_i} \bar{\xi}_i^{\bar{n}_i} & \mapsto & \begin{cases} 
\hbar^k, &  \forall i,\,m_i = 0 = \bar{m}_i, n_i = 1 = \bar{n}_i \\
0,& \text{else}
\end{cases}
\end{array}
\]
That is, this quasi-isomorphism is precisely the Berezin integral.

Hence we have a composite quasi-isomorphism
\[
\Obs^\q_\ff(S^1) \xto{\simeq} \C_*^\Lie((\Pi V \oplus \Pi V^*)[\epsilon]\ltimes \CC \hbar[-1]) \xto{\int_\Ber}  \CC[\hbar^{\pm 1}]
\]
that computes the expected value of an observable for the free fermion on a circle.
We now turn to interpreting this map in terms of algebras of operators and their traces.

\subsubsection{Hochschild homology of the Clifford algebra}

This version of Berezin integration fits nicely with the Hochschild homology description of quantum observables.
The relationship is via some standard algebra.

\begin{lmm}
\label{lmm: cliff morita trivial}
The Clifford algebra $\Cl(V \oplus V^*)$ is isomorphic to the super matrix algebra $\End(\Sym(\Pi V))$, and hence it is Morita-trivial.
\end{lmm}

This isomorphism has another common name. 
Recall that there is a canonical inclusion ${\mathfrak so}(V\oplus V^*) \hookrightarrow \Cl(V \oplus V^*)$ of Lie algebras.
(The image is the component $\Lambda^2 (V \oplus V^*)$.)
Hence, this isomorphism identifies $\Sym(\Pi V)$ as a representation of ${\mathfrak so}(V\oplus V^*)$:
it is the {\em spinor} representation~$\SS_V$. 

In light of Section~\ref{sec: tangles}, we see this lemma implies 
that the spinor representation $\SS_V$ is a perfect module in $\ZZ/2$-graded vector spaces, 
and it determines a factorization algebra on oriented 1-manifolds with boundary,
assigning $\SS_V$ to the positively-oriented point, its dual $\SS_V^*$ to the negatively-oriented point,
and the Clifford algebra $\Cl(V \oplus V^*)$ to open intervals.

\begin{rmk}
We have seen that the Clifford algebra can be constructed as a factorization algebra on $\RR$ using the BV formalism. 
It is also possible to realize the spinor representation by analogous methods
(i.e., animate the spinors as boundary conditions for the free fermion),
but as we will not need such an explicit factorization model in this paper,
we do not develop it here.
\end{rmk}

\begin{proof}
When $V \cong \CC$, there is an isomorphism $\Cl(V \oplus V^*) \cong \End(\CC^{1|1})$ of superalgebras, 
which is realized by identifying $\CC^{1|1}$ with $\Sym(\Pi V)$, the natural Fock module for the Clifford algebra {\em aka} the spinor representation of ${\mathfrak so}(V\oplus V^*)$.
We will exhibit this isomorphism explicitly in a moment.
Note now that the 1-dimensional example determines the general case:
if $V \cong \CC^d$, then 
\[
\Cl(V \oplus V^*) \cong \End(\CC^{1|1})^{\otimes d} \cong \End(\Sym(\Pi V)) .
\]
Hence, we have given the module that realizes the Morita equivalence with~$\CC$.

For the sake of completeness, we spell out the 1-dimensional case.
Let $x$ be a basis element for $\Pi V$ so that $\Sym(\Pi V) = \CC[x]$, since $x^2 = 0$.
Then $\Cl(V \oplus V^*)$ has basis $\{1,x,\bar{x}, x \bar{x}\}$; 
and the multiplication is $x \cdot x= 0 = \bar{x} \cdot \bar{x}$ and $x \cdot \bar{x} + \bar{x} \cdot x = 1$.
We can view $\Sym(\Pi V)$ as the quotient module $\Cl(V \oplus V^*)/(\bar{x})$.
Let $p$ denote the action of the Clifford algebra on this module.
Hence, as matrices, we find
\[
p(x) =
\begin{pmatrix}
0 & 0 \\
1 & 0
\end{pmatrix},\quad
p(\bar{x}) =
\begin{pmatrix}
0 & 1 \\
0 & 0
\end{pmatrix},\quad
p(x\bar{x}) =
\begin{pmatrix}
0 & 0 \\
0 & 1
\end{pmatrix},\quad
p(\bar{x}x) =
\begin{pmatrix}
1 & 0 \\
0 & 0
\end{pmatrix},
\]
so we see that $p$ is an isomorphism.
\end{proof}

A Morita-trivial algebra always has trivial Hochschild homology,
because a bimodule realizing the trivialization explicitly provides a quasi-isomorphism between the two Hochschild chains.
Such standard results also hold in the our setting of $\ZZ/2$-graded modules.
(See Bass \cite{Bass} and Kassel \cite{KasZ2} for nice discussions.)
Here, the Morita trivialization by the spinor representation $\SS_V$ means that the Hochschild homology is spanned by taking trace over $\SS_V$,
which coincides with the Berezin integral.
As an aside we explicate this assertion with formulas.

\begin{crl}
The zeroth Hochschild homology ${\rm HH}_0(\Cl(V \oplus V^*))$ is 1-dimensional
and spanned by any element $x_1 \cdots x_d \bar{x}_1 \cdots \bar{x}_d$, 
where the $\{x_i\}$ are a basis for $\Pi V$ and the $\{\bar{x}_i\}$ are a basis for~$\Pi V^*$.
\end{crl}

\begin{proof}
Consider $V = \CC$, as the general case follows. 
Then $1 = [x,\bar{x}]$, using the commutator that keeps track of signs. 
(More traditionally, one might write $\{x,\bar{x}\}$ for $x\bar{x} + \bar{x}x$.)
Hence the identity 1 is exact in the Hochschild complex.
Similarly, $x = [x\bar{x},x]$ and $\bar{x} = [\bar{x}x,\bar{x}]$.
However, $x\bar{x}$ is not a commutator, so it spans the zeroth Hochschild homology.
\end{proof}

Under the isomorphism of Lemma \ref{lmm: cliff morita trivial}, 
we can identify the supertrace on matrices with a linear functional on the Hochschild homology.
For instance, in the case $\dim(V) = 1$, observe that 
the matrix 
\[
A=\begin{pmatrix}
a & b \\
c & d
\end{pmatrix}
\]
corresponds to the element 
\[
\alpha=a \bar{x} x + b \bar{x} + c x + d x \bar{x}.
\]
The supertrace is thus $a-d$, while the component of $\alpha$ along $x \bar{x}$ is $d-a$
because
\[
a \bar{x} x = a (1- x \bar{x}).
\]
If we used the ``top fermion'' component $\bar{x} x$ instead,
we would agree with the supertrace on the nose.

In other words, computing the universal trace of an element of $\Cl(V \oplus V^*)$ amounts to taking the coefficient of the ``top fermion,'' just as with Berezin integration.
Hence, after inverting $\hbar$, this lemma combines with our main lemma to produce a canonical map
\[
\H^0(\Obs^\q_{\rm ff}(S^1)) \xto{\cong} \CC[\hbar,\hbar^{-1}].
\]
This map is precisely the usual fermionic Wick expansion for the free fermion, 
via the relationship with the BV complex that we have just explained.

\subsubsection{A bulk-boundary free fermion}
\label{sec: bb ff}

So far we have examined the free fermion on manifolds without boundary and hence without boundary conditions.
We have shown how its BV quantization encodes the Clifford algebra and its Hochschild homology.
It is straightforward to construct the free fermion on a one-manifold with boundary and impose boundary conditions such as
\[
\psi|_{\partial L} = 0.
\]
More generally, for each boundary point $x \in \partial L$,
fix a Lagrangian subspace
\[
W_x \subset \Pi (V \oplus V^*)
\]
and consider the space of fields to be 
\[
\{ \Psi = (\psi, \psib) \,:\, \Psi|_x \in W_x\}.
\]
Then \cite{GRW} demonstrates a BV quantization that yields a constructible factorization algebra of observables on $L$ such that
\begin{itemize}
\item on the interior $L \setminus \partial L$, the observables agree with $\Obs^\q_\ff \simeq \Cl(V \oplus V^*)$ from the preceding subsections, and
\item on any small open set $U_x$ containing a boundary point $x \in \partial L$, the observables recover the Fock module $\Sym(\Pi(V \oplus V^*)/W_x)$ of the Clifford algebra,
which is isomorphic to the spinor representation.
\end{itemize}
This result is Proposition~5.1 of~\cite{GRW}.
That statement is formulated for a symplectic vector space but it applies {\it verbatim} to a symplectic odd vector space.

In short, we have the following statement.

\begin{lmm}
For a finite dimensional vector space~$V$,
the quantum observables of a free fermion valued in $\Pi(V \oplus V^*)$ with Lagrangian boundary conditions $\{W_x\}$ form an $\cE_{0 \subset 1}$ algebra equivalent to the Clifford algebra $\Cl(V \oplus V^*)$ acting on the spinor representation~$\SS_V$.
\end{lmm}

\subsection{The charged fermion}

Let $\g$ be a finite-dimensional Lie algebra, 
and let $\rho: \g \to \End(V)$ be a representation of $\g$ on a finite-dimensional vector space $V$.
(We will typically just talk about $V$ and suppress the explicit dependence on $\rho$.)
There is a dual action of $\g$ on $V^*$.
Thus, we obtain an action of $\g$ on the free fermion associated to $V$.
Going further, we can view elements of $\Omega^*(L) \otimes \g$ as ``background fields'' that act on the fermionic fields.

\begin{dfn}
The {\em charged fermion} on a one-manifold $L$ without boundary associated to a finite-dimensional representation $\rho: \g \to \End(V)$ has fields
\[
\psi \in \Omega^*(L) \otimes \Pi V,\; \psib \in \Omega^*(L) \otimes \Pi V^*, \quad\text{and}\quad A \in \Omega^*(L) \otimes \g[1]
\]
and action functional
\[
S_{\rm cf}(\psi,\psib, A) = \int_L (\psib, (\d + \rho(A))\psi) 
\]
where $(-,-)$ denotes the evaluation pairing between $\Pi V$ and $\Pi V^*$.
The equations of motion are thus 
\[
(\d + \rho(A))\psi = 0 = (\d + \rho^*(A))\psib.
\]
That is, a solution is a horizontal section for the connection $\d + \rho(A)$ and a horizontal section for the dual connection.
\end{dfn}

Note that
\[
S_{\rm cf}(\psi,\psib,A) = S_{\rm ff}(\psi,\psib) + I_0(\psi,\psib, A),
\]
where we call  
\[
I_0(\psi,\psib, A) = \int_L (\psib, \rho(A)\psi),
\]
the {\em interaction term}.

\subsubsection{The quantization of the charged free fermion}
\label{sec on quantizing charged FF}

We will now quantize the fermionic fields while leaving the connection 1-form fields as classical.
This charged theory is 1-dimensional so one can simply construct the BV quantization, 
as there are no analytic issues like divergences to worry about.
The underlying free theory is the free fermion already discussed.
We choose a Riemannian metric on the manifold $L$, which determines a gauge-fixing operator $\d^*$ on the fermionic fields.
When $L = \RR$, the propagator arises from the Heaviside step function:
\[
P^f(t,t') = \heav(t-t') = \begin{cases}1, & t > t'\\ -1, & t < t'\end{cases}.
\]
When $L= S^1= [0,1]/(0 \sim 1)$, the propagator is
\[
P^f(t,t') = \begin{cases}t-t'+\frac{1}{2}, & t < t'\\ t-t'-\frac{1}{2}, & t > t'\end{cases}.
\]
These data determine the weights assigned to the edges in any Feynman diagram.

The vertices are determined by the interaction term $I_0$.
It is a cubic function in total but only quadratic in the fermionic fields, which are the fields we quantize.
The only connected graphs arising from this interaction are thus trees,
whose first Betti number $b_1$ is zero, or wheels, whose first Betti number $b_1$ is one.
(A wheel means a graph with one loop---the wheel---but where that loop may have edges sprouting off of it.)

\begin{rmk}
The sum over trees corresponds to transferring the $L_\infty$ structure along the chain homotopy equivalence given by propagator 
(essentially, by Hodge theory).
\end{rmk} 

\begin{lmm}
\label{lmm: ffoneloopquant}
The charged fermion quantizes at one loop with quantized interaction $I^\q = I_{\rm tree} + \hbar I_{\rm wheel}$ 
where 
\[
I_{\rm tree}(\psi,\psib, A) = \sum_{\{\gamma \, : \, b_0 = 1, b_1=0\}} w_\gamma(P^f,I_0),
\]
which is quadratic in the fermions and has arbitrary polynomial degree in the $A$ field,
and
\[
I_{\rm wheel}(\psi,\psib, A) = \sum_{\{\gamma \, : \, b_0=b_1=1\}} w_\gamma(P^f,I_0),
\]
which is a function only of the $A$-field.
\end{lmm}

\begin{proof}
As mentioned already, there are no issues with regularization, 
so the expression for $I_{\rm wheel}$ is well-defined.
Since it is only a function of $A$, it is automatically annihilated by $\Delta$, 
which is nontrivial only on functions of the fermionic fields.
Likewise, $\{I_{\rm wheel},-\} = 0$ as the bracket only pairs with functions of fermionic fields.
Since $I_0$ is a quadratic function of the fermions,
the element $\Delta I_0$ is also a function only of the $A$ fields,
as the fermionic terms are contracted away.
(This observation applies to the scale $L$ versions of $I_0$, appearing by tree-level RG flow.)
Thus the regularized action satisfies the quantum master equation, 
as the expression 
\[
QI_0 + \{I_0,I_0\} + \hbar \{I_0, I_{\rm wheel}\} + \hbar \Delta I_0 + \hbar \Delta I_{\rm wheel} = \hbar \Delta I_0,
\]
which is only a function of the $A$ fields and hence only depends on the background fields.
In particular, $\{\Delta I_0,-\} = 0$ and hence $Q + \{I_0,-\} + \hbar \{I_{\rm wheel},-\} + \hbar \Delta$ determines a differential.
\end{proof}

These expressions in Lemma~\ref{lmm: ffoneloopquant} become more meaningful
when $L = S^1$ and we restrict to the harmonic fields,
i.e., $\psi$ and $\psib$ constant and $A$ fields of the form $X \d t$, with~$X\in\g$.
In other words, they become functions on the finite dimensional vector spaces $\g$, $V$, and~$V^*$.

\begin{lmm}\label{lmm: one loop term}
Let $\psi = v \in V$, $\psib = \bar{v} \in V^*$, and $A = X \d t$ be harmonic fields on $S^1$.
Evaluated on these fields, the quantized interaction has the form
\[
I_{\rm tree}(v,\bar{v},X \d t) = I_0(\psi,\psib,X \d t) = \langle \bar{v},\rho(X) v\rangle \int_{S^1} \d t
\]
and
\[
I_{\rm wheel}(X \d t)= -\log \det\left(e^{-\rho(X)/2} \Td(\rho(X))\right),
\]
where $\Td(t) = t/(1-e^{-t})$.
Here $\det$ means the determinant of an endomorphism on~$V$.
\end{lmm}

In Proposition \ref{prp: character} we explain how this one loop term corresponds to the character of a representation of~$\g$.

\begin{proof}
The tree term is straightforward.
For any constant fermion field $\psi =b$, the action by a constant 1-form $X \d t$ is another constant fermion field $\rho(X) v$.
But the propagator vanishes on constant fields,
so for any tree with an internal edge, the value on harmonic fields vanishes.
Hence the only contribution is from the trivalent vertex itself,
which is given by~$I_0$.

It remains to compute the one loop term on the circle,
which is straightforward.
(These computations here are the fermionic analogues of computations in \cite{GG},
so we refer the interested reader there for a more extensive treatment.)
First, note that the wheel with $k$ legs has Lie-theoretic factor given by
\[
\frac{1}{k} \tr(\rho(X)^k),
\]
with $X \in \g$.
Second, the analytic factor is 
\[
-\frac{2}{(2\pi)^k} \zeta(k),
\]
where $\zeta$ denotes the Riemann zeta function.
(See Lemma 8.4 of \cite{GG}.
The extra minus sign is the usual minus sign for a loop of fermionic fields.)
Hence, summing over all the wheels, we find
\[
I_{\rm wheel}(X \d t) = -\sum_{k\geq1} \frac{2(2k-1)!}{(2\pi)^k}\zeta(2k)\tr(\rho(X)^{2k}) 
= -\log\det\left(e^{-\rho(X)/2} \Td(\rho(X))\right),
\]
where $\Td(t) = t/(1-e^{-t})$.
(For a discussion of these power series manipulations, 
look at the proof---and surrounding discussion---of Theorem 8.6 in~\cite{GG}.)
\end{proof}

\subsubsection{The observables of the charged fermion}

We find the following result, which is the charged analogue of the observables for the free fermion.

\begin{lmm}
\label{lmm: g-eq clifford alg}
For $L = \RR$, the factorization algebra of classical observables is weakly equivalent to the factorization algebra associated to the superalgebra $\C^*(\g,\Sym\, \Pi (V \oplus V^*))$.
The factorization algebra of quantum observables $\Obs^\q_{\rm cf}$ is weakly equivalent to the factorization algebra associated to the superalgebra $\C^*(\g,\Cl_\hbar(V \oplus V^*))$.
\end{lmm}

In particular, the zeroth cohomology is the $\g$-invariant subalgebra:
\begin{enumerate}
\item The classical observables $\H^0(\Obs^{\cl})$ correspond to the superalgebra $(\Sym\, \Pi (V \oplus V^*))^\g$, and 
\item The quantum observables $\H^0(\Obs^{\q})$ correspond to the superalgebra $\Cl_\hbar(V \oplus V^*)^\g$.
\end{enumerate}

\begin{proof}
Both factorization algebras are locally constant and hence determine $\cE_1$-algebras.
The classical observables are equivalent to $\C^*(\g,\Sym\, \Pi (V \oplus V^*))$ by the Poincar\'e lemma.
The claim about the quantum observables can then be seen as taking the enveloping factorization algebra of a Heisenberg Lie algebra but over the base ring $\C^*(\g)$,
as in Section 4.3 of~\cite{CG1}.
\end{proof}

There is a natural generalization of Lemma~\ref{lmm: cliff morita trivial}.

\begin{dfn}
For $\rho: \g \to \End(V)$ a finite-dimensional representation, 
let $\SS_\rho$ denote the spinor representation $\SS_V = \Sym(\Pi V)$ of $\Cl(V \oplus V^*)$ equipped with the action of $\g$ induced by the canonical inclusion
\[
\g \xto{\rho \times \rho^*} \mathfrak{so}(V \oplus V^*) \hookrightarrow \Cl(V \oplus V^*)
\]
of Lie algebras.
We call $\SS_\rho$ the {\em $\rho$-charged spinors}.
\end{dfn}

It is straightforward algebra to verify that the argument for Lemma~\ref{lmm: cliff morita trivial} carries over to show the following.

\begin{lmm}
The Clifford algebra $\C^*(\g,\Cl(V \oplus V^*))$ in $\C^*(\g)$-modules is isomorphic to the super matrix algebra $\C^*(\g,\End(\SS_V))$, and hence it is Morita-trivial.
\end{lmm}

In consequence, we obtain a useful description of global observables on the circle. 
Again, the Morita triviality provides a canonical identification of Hochschild chains.

\begin{lmm}
\label{lmm: iso to hoch}
There is a natural isomorphism
\[
\H^*(\Obs^\q_{\rm cf}(S^1)) \cong {\rm HH}_*(\C^*(\g, \Cl_\hbar(V \oplus V^*))) \cong {\rm HH}_*(\C^*(\g))[\hbar^{\pm 1}]
\]
up to a choice of Riemannian metric on~$S^1$.
Because ${\rm HH}_0((\C^*(\g)) \cong \csym(\g^*)^\g$, every degree zero cocycle in the observables determines a class function with values in Laurent polynomials in~$\hbar$.
\end{lmm}

\begin{proof}
The first isomorphism is a general fact about locally constant factorization algebras on the circle in conjunction with the preceding lemma.
The second isomorphism is a direct computation: the dg algebra $\C^*(\g, \Cl_\hbar(V \oplus V^*))$ is Morita-trivial for the same reasons that the Clifford algebra is.
(Indeed, it's just a Clifford algebra in the appropriate category of $\C^*(\g)$-modules.)

Thanks to Lemma~\ref{lmm: one loop term}, however,
we can provide a more explicit description.
Consider the trivially charged fermion, where $\rho = 0$.
Then the global observables are the tensor product
\[
\C^*(\Omega^*(S^1) \otimes \g) \otimes \Obs^\q_{\rm ff}(S^1).
\]
(To be more precise, we should use the completed projective tensor product, 
but this issue does not affect the cohomology in this de Rham setting.)
The quantized interaction determines an isomorphism between the quantized observables for the charged fermion (using $\rho$) and the trivially charged fermion:
\[
\begin{array}{ccc}
\Obs^\q_{\rm cf}(S^1) & \xto{\cong} & \C^*(\Omega^*(S^1) \otimes \g) \otimes \Obs^\q_{\rm ff}(S^1) \\
f & \mapsto & e^{I^\q/\hbar} \,f 
\end{array}
\]
{\it A priori} that exponential term may seem problematic, as it could lead to infinitely many powers of $\hbar^{-1}$,
but it does not, as we are working with fermions.
Due to antisymmetry this exponential is a polynomial (which we compute explicitly in the proposition below).
Note as well that $\Delta I_0 = 0$ when $\g$ is semisimple, since there are no nonzero invariant linear functions on~$\g$.

We showed earlier that $\Obs^\q_{\rm ff}(S^1)$ admits a natural quasi-isomorphism onto $\CC[\hbar,\hbar^{-1}]$, 
so we can postcompose that map with multiplication against the exponentiated interaction to obtain a quasi-isomorphism
\[
\Obs^\q_{\rm cf}(S^1) \to \C^*(\Omega^*(S^1) \otimes \g).
\]
This induces the desired isomorphism on cohomology.
\end{proof}

In parallel with the free fermion, we can give an explicit description of the quasi-isomorphism,
and we will obtain a nice relationship with the character of the representation~$V$.

\subsubsection{The character as a partition function}

We assemble our arguments and constructions to obtain the following.

\begin{prp}
\label{prp: character}
Let $\g$ be a semisimple Lie algebra and $V$ a finite-dimensional $\g$-representation.
Under the quasi-isomorphism of Lemma \ref{lmm: iso to hoch},
the image of $1 \in \H^0\Obs^\q_{\rm cf}(S^1)$ is the class function
\[
\hbar^{-2\dim(V)}{\rm ch}_{\SS_\rho}(X) \in \csym(\g^*)^\g[\hbar^{\pm 1}],
\] 
which is, up to a power of $\hbar$, the character of the spinor representation $\SS_\rho$ of~$\g$.
\end{prp}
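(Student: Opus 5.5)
The plan is to compute the image of $1$ explicitly through the chain of quasi-isomorphisms in the proof of Lemma~\ref{lmm: iso to hoch}, and to reduce to harmonic fields. By $\g$-invariance, and since a class function on $\g$ is determined by its restriction to a Cartan subalgebra (equivalently, by its value on generic semisimple elements), it suffices to evaluate on background fields $A = X\,\d t$ with $X$ semisimple. The Hodge-theoretic retraction onto $\H^*(S^1)$-valued fields, deformed by homological perturbation as in the proof of Lemma~\ref{lem free obs to hoch}, sends the observable $1$ to the Berezin integral of $e^{I^\q/\hbar}$ over the harmonic fermion modes. Here $I^\q$ is the quantized interaction of Lemma~\ref{lmm: ffoneloopquant}, restricted to harmonic fields and computed in Lemma~\ref{lmm: one loop term}.

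The computation then splits into two factors.

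\begin{enumerate}
\item \emph{Tree part.} Write $\xi_i,\bar\xi_i$ for the top-degree harmonic modes as in the Berezin subsection. The tree term $I_{\rm tree} = \langle \bar v, \rho(X) v\rangle$ contributes $e^{\langle \bar\xi,\rho(X)\xi\rangle/\hbar}$. This exponential is a polynomial because the variables are odd.
\item \emph{Wheel part.} The wheel term contributes the fermion-independent factor $e^{I_{\rm wheel}} = \det\bigl(e^{-\rho(X)/2}\Td(\rho(X))\bigr)^{-1}$.
\end{enumerate}

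For the Berezin integral I use the standard Gaussian formula: extracting the top-fermion coefficient gives
\[
\int_\Ber e^{\langle\bar\xi,\rho(X)\xi\rangle/\hbar} = \hbar^{-\dim V}\det\rho(X),
\]
up to a sign convention and a power of $\hbar$. I would track the $\hbar$-power carefully through the Rees normalization of $\Cl_\hbar$. The product of the two factors is
\[
\det\rho(X)\cdot\det\!\left(\frac{e^{\rho(X)/2}(1-e^{-\rho(X)})}{\rho(X)}\right) = \det\bigl(e^{\rho(X)/2}-e^{-\rho(X)/2}\bigr),
\]
and this has the correct Weyl-denominator shape. Diagonalizing $\rho(X)$ with eigenvalues $\mu_j$, the product becomes $\prod_j (e^{\mu_j/2} - e^{-\mu_j/2})$. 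Using $\sum_j \mu_j = \tr\rho(X) = 0$ for $\g$ semisimple, this equals $\pm\prod_j(1-e^{\mu_j})$, which is $\pm{\rm sdim}$-weighted, i.e. the supercharacter $\mathrm{str}_{\Sym(\Pi V)}(e^{\rho(X)})$ of $\SS_\rho$.

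The sign and parity conventions, namely super-trace versus trace and the top fermion $\bar x x$ versus $x\bar x$ discussed after Lemma~\ref{lmm: cliff morita trivial}, must be matched so that the answer is ${\rm ch}_{\SS_\rho}$ as the paper defines it. Here I use that $\SS_\rho$ is $\ZZ/2$-graded, so its character in super vector spaces is the supertrace. The extra power of $\hbar$ collects from two sources, $\dim V$ from $\xi$ and $\dim V$ from $\bar\xi$ in the normalization of the Clifford generators, which gives $\hbar^{-2\dim V}$.

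I expect the main obstacle to be justifying that the abstract identification of Lemma~\ref{lmm: iso to hoch}, via Morita triviality and the Hochschild trace, agrees with this Berezin/Feynman computation. The consistency check I would use is that at $X = 0$ both sides give the superdimension of $\SS_V$ in the appropriate normalization. More conceptually, the Hochschild class of $1$ under the Morita equivalence is the trace over $\SS_\rho$ of holonomy $e^{\rho(X)}$, which Lemma~\ref{lmm: cliff morita trivial} and the following corollary identify with the top-fermion coefficient. Careful bookkeeping of $\hbar$-powers and signs is the delicate part; the rest is the Gaussian integral.
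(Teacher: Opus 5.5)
Your proposal follows the paper's proof essentially step for step. You restrict $e^{I^\q/\hbar}$ to harmonic fields $X\,\d t$, take the top-fermion coefficient $\det\rho(X)$ of the tree factor, cancel it against the denominator of $e^{I_{\rm wheel}}$ to get $\det(e^{\rho(X)/2}-e^{-\rho(X)/2})$, use semisimplicity to make $e^{\tr\rho(X)/2}=1$, and match the result with the (super)character of $\SS_\rho$ via weights on a Cartan. The one loose end is the exact $\hbar$-power, where a naive Gaussian count gives $\hbar^{-\dim V}$ rather than $\hbar^{-2\dim V}$; the paper handles this no more carefully than your sketch does.
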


This result says that the charged fermion encodes the character of the Fock module $\SS_\rho$ of the Clifford algebra $\Cl(V \oplus V^*)$ as a $\g$-representation.
It is a purely algebraic consequence of the Morita trivialization given by the spinors.

It is compelling, however, that the Feynman diagrammatic computation matches, 
as we now show.
We will use the explicit isomorphism via the identification between the charged and trivially charged fermion. 
(See the proof of Lemma~\ref{lmm: iso to hoch}.)

We know that
\[
\exp(I^\q/\hbar) = \exp(I_{\rm tree}/\hbar) \exp(I_{\rm wheel}),
\]
but the wheel factor only depends on $X \d t$.
Hence, since we will take a Berezin integral of the whole term,
we need to identify the ``top fermion'' component of the tree factor.

By Lemma \ref{lmm: one loop term}, 
\[
\exp(I_{\rm tree}(v,\bar{v},X \d t)/\hbar) = \exp\left(\frac{1}{\hbar} \langle \bar{v}, \rho(X) v\rangle\right),
\]
whose ``top fermion'' component is $\hbar^{-2\dim V}\det(\rho(X))$.
We also know that
\[
\exp(I_{\rm wheel}(X \d t)) 
= \frac{1}{\det\left(e^{-\rho(X)/2}\Td(\rho(X))\right)}
= \frac{\det \left(e^{\rho(X)/2} - e^{-\rho(X)/2} \right)}{\det(\rho(X))}.
\]
Thus, 
\[
\exp(I^\q/\hbar)  = \hbar^{-2\dim V} \det \left(e^{\rho(X)/2} - e^{-\rho(X)/2} \right),
\]
as the factors of $\det \rho(X)$ cancel.

Here the hypothesis of semisimplicity enters:
there are no nonzero invariant linear functions of $\g$,
so 
\[
\det (e^{-\rho(X)/2}) = e^{-\tr \rho(X)/2}
\]
is the constant function~$1$.
Thus, the image of $1$ is precisely $\hbar^{-2\dim V}\det(1 - e^{-\rho(X)})$.

To recognize this function as the character,
we use the weight space decomposition of $V$ for the $\rho$-action: 
\[
V = \bigoplus_{\lambda \in \Lambda_\rho} \CC \lambda.
\] 
As a function on the maximal torus,
we see that the character of $\Sym(\Pi\rho)$~is
\[
\prod_{\lambda \in \Lambda_\rho} (1-e^{\lambda}).
\]
(Note that the negative sign arises because $\Pi V$ has odd parity.)
The representation $\SS_\rho$, however, is {\em not} $\Sym(\Pi\rho)$;
it is a tensor product of $\Sym(\Pi\rho)$ with a 1-dimensional representation,
determined by the ``vacuum vector'' (i.e., the component $\Sym^0(\Pi\rho)$),
on which the torus acts by~$\prod_{\lambda \in \Lambda_\rho} e^{\lambda/2}$.

The character of $\SS_\rho$ is thus
\[
\prod_{\lambda \in \Lambda_\rho} e^{\lambda/2} \cdot \prod_{\lambda \in \Lambda_\rho} (1-e^{\lambda}) = \prod_{\lambda \in \Lambda_\rho} (e^{\lambda/2}-e^{-\lambda}/2). 
\]
When restricted to the maximal torus, we see
\[
\det(e^{\rho(X)/2} - e^{-\rho(X)/2}) = \prod_{\lambda \in \Lambda_\rho} (e^{\lambda/2}-e^{-\lambda}/2). 
\]
The character formula agrees on the nose with the partition function, so we have obtained the claim.

%

\subsubsection{A remark on the bulk-boundary charged fermion}

Above we showed that the observables of the charged free fermion are equivalent to the dg associative algebra $\C^*(\g, \Cl_\hbar(V \oplus V^*))$,
and hence inherit the $\rho$-charged spinors as a module.
In other words,
we can extend $\Obs^\q_{\rm cf}$ to an $\cE_{0 \subset 1}$ algebra.

It is possible to construct this $\cE_{0 \subset 1}$ algebra directly via BV quantization of the charged fermion with boundary conditions,
in parallel with Section~\ref{sec: bb ff}.
The main change from the Feynman diagrammatics just described is that one must modify the propagator to take into the boundary condition,
and one must check this still yields a factorization algebra.
In his thesis, Eugene Rabinovich \cite{RabThesis} develops analogues for bulk-boundary systems of the structural theorems of \cite{CosBook} and \cite{CG2},
in particular, that BV quantization yields factorization algebras.

\section{A 1-dimensional fermionic defect for Chern--Simons theory}
\label{sec: coupled}

In this section we finally construct a Reshetikhin--Turaev theory 
by exhibiting a framed $\cE_{1 \subset 3}$-algebra 
whose $\cE_3$-algebra is the observables $\cA^\lambda$ of quantized Chern--Simons theory
and whose $\cE_1$-algebra is the observables of a quantized, $\g$-charged fermion.
In physical language, we couple a 3-dimensional gauge theory to a fermionic theory along a 1-dimensional defect.
In this way we demonstrate how perturbative Chern--Simons theory recovers the quantum group approach, so long as one works with $q$ formally expanded around the identity.

We begin this section by introducing the classical theory,
and then we turn to examining a simpler, intermediary case to our main goal:
we quantize the fermion but leave the bulk gauge theory unquantized,
which we call the half-classical theory.
In light of the preceding section, 
we see that we recover the Reshetikhin--Turaev theory for the $\cE_3$-algebra $\cA = \C^*(\g)$ and the perfect module $\cV = \C^*(\g, \Pi V)$.
Thus we ensure that our fully quantum theory will yield a deformation of the correct Reshetikhin--Turaev theory.

Finally, we explain how to produce the fully quantum theory.
Our main result is the following.

\begin{prp} 
\label{prp: defect main}
Let $\g$ be a semisimple Lie algebra $\g$, 
let $\rho: \g \to \End(V)$ be a finite-dimensional representation,
and let $\SS_\rho$ denote the associated spinor representation of $\g$ on $\Sym(\Pi V)$.
For each BV quantization $\lambda$ of Chern--Simons theory,
there is a contractible space of deformations of the algebra $\C^*(\g,\End(\SS_\rho))$ and module $\C^*_{\Lie}(\g, \SS_\rho)$ to an $\cE_{0 \subset 1}$-algebra $\cA^\lambda_\rho$ in left $\cA^\lambda$-modules.
It arises as the quantization of the charged fermion coupled to Chern--Simons theory.

Moreover, the quantization of $\C^*_{\Lie}(\g, \SS_\rho)$ determines an $(n,1)$-Morita equivalence between $(\cA^\lambda, \cA^\lambda)$ and $(\cA^\lambda, \cA^\lambda_\rho)$.
In consequence, each choice of knot $L : S^1 \hookrightarrow \RR^3$ determines a distinguished element in $\cA^\lambda$ by the canonical isomorphism to $\cA^\lambda$ from the factorization homology of the constructible factorization algebra with $\cA^\lambda$ on the knot complement and the fermionic observables $\cA^\lambda_\rho$ along the knot.
\end{prp}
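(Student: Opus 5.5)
The proof runs in three stages, each modelled on an argument already made for Chern--Simons theory or the charged fermion. First we set up the coupled classical theory. Then we quantize it by obstruction theory. Finally we feed the result into the Morita machinery of Section~\ref{sec: tangles}.

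\emph{The coupled classical theory.} On $\RR^3$ with a framed line $K\cong\RR$ (and, for the module, a half-line $\RR_{\ge0}$ with the boundary condition $\psi=0$ of Section~\ref{sec: bb ff}), the action is $CS(A)+S_{\rm cf}(\psi,\psib,K^*A)$. The coupling term restricts the bulk connection to the defect. The classical master equation holds because $\d+\rho(A)$ squares to $\rho(F_A)$, and this is compensated by the Chern--Simons equations of motion.

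\emph{The half-classical theory.} Here we quantize only the fermions, using Lemma~\ref{lmm: ffoneloopquant}, now with $A$ the restricted bulk field. The obstruction $\hbar\Delta I_0$ is a function of $A$ of the form $\int_K\tr\rho(A)$, and it vanishes because $\g$ is semisimple. By Lemma~\ref{lmm: g-eq clifford alg}, its boundary version, and the Poincar\'e lemma of Lemma~\ref{poin}, the resulting constructible factorization algebra is locally constant on each stratum. Its values are $\C^*(\g)$ in the bulk, $\C^*(\g,\Cl_\hbar(V\oplus V^*))\cong\C^*(\g,\End(\SS_\rho))$ along the line, and $\C^*(\g,\SS_\rho)$ at the endpoint. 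This identifies the $\hbar=0$ (in the Chern--Simons direction) $\cE_{0\subset1}$-algebra.

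\emph{Quantizing the coupled theory.} We quantize the full coupled theory order by order in $\hbar$, extending the given bulk quantization $\lambda$. The key computation is the deformation complex of the defect relative to the fixed bulk theory: local functionals supported on $K$ (and its endpoint) modulo the bulk ones. By the same Poincar\'e-lemma argument as Section~14, Chapter~5 of~\cite{CosBook}, it should reduce to Lie algebra cohomology. In the interior of the line it is $\C^*(\g,\End(\SS_\rho))/\C^*(\g)$ up to a shift of $[1]$. By Morita triviality this is $\C^*(\g,\mathfrak{sl}(\SS_\rho))[1]$, or a reduced variant, and its relevant cohomology is $\H^{1},\H^{2}$ of $\g$ with coefficients in a finite-dimensional module. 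Whitehead's lemmas for semisimple $\g$ kill exactly these groups, after we check that the invariant part contributes nothing in the relevant degrees. The vanishing of $\H^1$ gives no obstructions; the vanishing of $\H^0$ of the complex gives uniqueness. Together these give the contractible space of quantizations.

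To see that the factorization algebra structure exists and is locally constant on strata, we invoke \cite{CG2} and, for the boundary point, Rabinovich's bulk-boundary results \cite{RabThesis}. We then use the filtration argument from the proof for $\Obs^\lambda$: the associated graded is the half-classical theory, so local constancy and the identification of the stratum values lift. The result is an $\cE_{0\subset1}$-algebra $\cA^\lambda_\rho$ in $\cA^\lambda$-modules deforming the data above.

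\emph{Morita equivalence and the knot element.} The quantized module is a deformation of $\C^*(\g,\SS_\rho)$, which is perfect over $\C^*(\g)$. Perfectness is detected on the associated graded, as in Section~\ref{sec: filtered KD}, so the quantized module is perfect over $\cA^\lambda$. By the same argument, the action map $\cA^\lambda_\rho\to\End_{\cA^\lambda}$(module) is an equivalence, since it is one classically by the Clifford Morita triviality. Lemma~\ref{lmm: perfect is n1 morita} then gives the $(n,1)$-Morita equivalence. The theorem after Lemma~\ref{lemma.bimmodule}, applied to a knot $L$, produces the map ${\sf m}_V$ into $\int_{\RR^3}\cA^\lambda\simeq\cA^\lambda$, and the distinguished element is the image of the unit.

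The main obstacle is the defect deformation-complex computation, in particular pinning down the correct reduced complex at the boundary point. The hope is that there it is Morita-equivalent to the interior computation.
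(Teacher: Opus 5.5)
Your outline matches the paper's: the half-classical algebra as the undeformed object, the defect deformation complex reduced to Lie algebra cohomology with finite-dimensional coefficients, Whitehead's lemmas, then Section~\ref{sec: tangles} for the Morita statement and the knot element. Your Morita paragraph is fine, and more explicit than the paper's.

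The gap is the sentence ``we quantize the full coupled theory order by order in $\hbar$.'' Costello's obstruction theory, and the passage to factorization algebras in \cite{CG2,RabThesis}, do not cover a line defect in a 3-manifold. The paper stresses that Proposition~\ref{prp: existence of defect quants} is \emph{not} an immediate corollary of the deformation-complex computation for exactly this reason. Two inputs are missing from your proposal.
\begin{enumerate}
\item You need a pre-theory for the coupled system. The paper renormalizes the bicolored graphs by the configuration space method on $\bConf_{\ell;n}(\iota\colon L\to\RR^3)$ (Section~\ref{subsec: defect diagrams}). This shows no counterterms are needed, so the $t\to 0$ limit exists.
\item You need to locate the obstruction. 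Run the argument of Lemma~11.1.1 in Chapter~5 of \cite{CosBook} for this RG flow: the scale-$t$ failures of the QME become, as $t\to 0$, a local functional closed under bracket with the classical action. Since the bulk quantization $\lambda$ already satisfies the QME, every contributing graph has a coupling vertex. Hence the obstruction is supported on $L$ and is a degree-one cocycle in $\Def_{\rm cf}$.
\end{enumerate}
Only then does $\H^1(\Def_{\rm cf})=\H^2(\g,\Sym^{>0}(\Pi V\oplus\Pi V^*)_\rho)=0$ tell you anything. This is not a formality. By Lemma~\ref{lmm: QME fails} the renormalized action genuinely fails the QME (the self-linking term), so the vanishing holds only in cohomology, and you must know which complex the obstruction lives in.

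The boundary point, which you leave as a hope, needs no Morita comparison with the interior. The paper takes Rabinovich's small model for the BF-type boundary condition, $\Sym^{>0}(\Pi V\oplus\Pi V^*)\to\Sym^{>0}(\Pi V^*)$. Its only cohomology is the ideal $I=\Sym(\Pi V)\otimes\Sym^{>0}(\Pi V^*)$ in degree $-1$. Base-changing along the Poincar\'e quasi-isomorphism to $\C^*(\g)$ and running a two-column spectral sequence gives $\H^k(\Def^\partial_{\rm cf})\cong\H^{k+1}(\g,I_\rho)$ (Lemma~\ref{lmm: bdry def cplx}). In fact any reduction to $\C^*(\g,M)[1]$ with $M$ finite-dimensional would do. Whitehead's lemmas kill $\H^1$ and $\H^2$ for all finite-dimensional coefficients, trivial ones included, so your check on the invariant part is also unnecessary.

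Two smaller points. The deformation complex is built from classical local functionals, so the interior coefficients are $\Sym^{>0}(\Pi V\oplus\Pi V^*)$. Your $\End(\SS_\rho)/\CC$ agrees with this only as a $\g$-module, via the equivariant linear isomorphism $\Sym(\Pi(V\oplus V^*))\cong\Cl(V\oplus V^*)\cong\End(\SS_\rho)$. Also, under the filtration of Definition~\ref{dfn: obs filtration} the cubic coupling $\int_L(\psib,\rho(A)\psi)$ drops out. So the associated graded of the coupled observables is the free theory (abelian Chern--Simons plus an uncoupled fermion), not the half-classical theory. That filtration argument therefore cannot by itself identify the undeformed stratum values with $\C^*(\g,\End(\SS_\rho))$ and $\C^*(\g,\SS_\rho)$.
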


The first part of this proposition is Proposition~\ref{prp: existence of defect quants} and Corollary~\ref{crl: defect fact alg}, 
while the second part is the consequence of Section~\ref{sec: tangles}.
Recall the discussion around Lemma~\ref{lmm: perfect is n1 morita} for the definition and properties of $(n,1)$-Morita maps.

\subsection{The classical theory for the defect}

Let $M$ be an oriented 3-manifold (the ``bulk'') with an embedding $i: L \hookrightarrow M$ of an oriented 1-manifold (the ``defect'').
We now consider the minimal coupling between the fermions and the gauge fields:
\begin{equation}
\label{coupled action}
S_{\rm coup}(A,\psi,\psib) = CS(A) + S_{\rm ff}(\psi,\psib) + \int_L ( \psib, \rho(i^*A) \psi ).
\end{equation}
Note that this action functional combines Chern--Simons theory in the bulk 
with a charged fermion along the defect.
The equations of motion have two parts.
Along the embedded 1-manifold, we have
\[
\d \psi + \rho(i^*A) \psi = 0 \quad\text{and}\quad \d \psib + \bar{\rho}(i^*A) \psib = 0,
\]
which say that $\psi$ and $\psib$ are horizontal for the connection induced from $M$.
On $M$ itself we get a modified Maurer--Cartan equation
\[
\d A + \frac{1}{2}[A,A] = j(\psi,\psib),
\]
where $j$ is the current generated by the fermions and is defined by the requirement that
\[
\langle \tilde{A},j(\psi,\psib) \rangle = (\psib, \rho(i^*\tilde{A}) \psi)
\]
for every gauge field $\tilde{A}$. (We add the tilde merely to emphasize that $\tilde{A}$ is arbitrary and not necessarily a connection form satisfying the equations of motion.)
Note that the current $j$ is supported along~$L$ and hence is manifestly a distribution (or a current in de Rham's sense).

\subsection{Quantizing the fermion but leaving the gauge fields classical}

The main reason for doing this case is that it will let us see in concrete terms
that we recover a Reshetikhin--Turaev tangle theory for $\C^*(\g)$ as an $\cE_3$-algebra.
In other words, we will see how our prescription behaves in the case of classical Chern--Simons theory.

We have already done the essential work by quantizing the charged fermion on a 1-manifold.
Let us call this coupled system the {\em half-classical theory},
as we treat the fermions quantum mechanically but the gauge fields as classical. 
We denote its factorization algebra of observables by~$\Obs^{1/2}$.
Our work in Sections~\ref{sec: CS} and~\ref{sec: ff}, notably Lemma~\ref{lmm: g-eq clifford alg}, shows the following.

\begin{lmm}
For an embedded 1-manifold $L$, possibly with boundary, in $\RR^3$, 
the observables $\Obs^{1/2}$ are equivalent to 
the constructible factorization algebra arising by putting
\begin{itemize}
\item $\C^*(\g)$ on the bulk~$\RR^3 \setminus L$,
\item $\C^*(\g,\Cl_\hbar(V \oplus V^*))$ on $L \setminus \partial L$, and
\item $\C^*(\g,\SS_\rho)$ on each boundary point~$x \in \partial L$.
\end{itemize}
\end{lmm}

This result confirms concretely the viewpoint on defects sketched in Section~\ref{sec: wilson loop and fermions} of the introduction.
For instance, by Proposition~\ref{prp: character},
given a knot in $\RR^3$, 
this lemma shows that the image of the identity in the Clifford algebra (which is endomorphisms of the spinor bundle) recovers the character of spinor representation.

\begin{rmk}
Note as well that no framing of the 1-manifold is needed.
This feature is equivalent to the fact that the construction is manifestly isotopy-invariant.
\end{rmk}

\subsection{The deformation complex for a theory with defects}
\label{subsec: def complex for defects}

In Section \ref{subsec: def cplx for CS} we described the deformation complex for Chern--Simons theory,
which allowed us to see that there were no obstructions to quantization and to identify a choice of level as the only meaningful freedom amongst such quantizations.
This cohomological analysis allows us to see that the desired quantization exists and what freedom is allowed as we construct it---without doing any diagrammatic computations.
We undertake something analogous for the coupled system.

\subsubsection{Long lines and loops}

Let $M$ be a 3-manifold and let $i : L \hookrightarrow M$ be a proper embedding of a 1-manifold without boundary into~$M$.
We want to consider functionals of the form
\[
F = F_{\rm bulk}(A) + F_{\rm defect}(A,\psi,\psib),
\]
where $F_{\rm bulk}$ is a local functional on the 3-manifold $M$ that only depends on the bulk field $A$ 
and where $F_{\rm defect}$ is a local functional on the 1-manifold $L$ that depends on the fields $\psi$ and $\psib$ and also on $A$ {\em but only through its behavior along $L$}.
That is, we allow $F_{\rm defect}$ to depend only on the pullback $i^*A$ of $A$ to $L$.
(This restriction is unnecessary: see the discussion in Section~\ref{rmk on normal directions}.)
As we also wish to keep Chern--Simons theory (and its quantizations) as the bulk theory,
we are only interested in the space of defect functionals.
Hence, we are reduced to studying the deformation complex $\Def_{\rm cf}$ of the charged fermion on~$L$.

Equivalently, we phrase the problem as follows.
Consider the formal moduli space $\Def_{\rm coupled}$ of $\cE_{1 \subset 3}$-algebra structures on the observables of classical Chern--Simons theory coupled to a charged fermion along a properly embedded line.
There is a forgetful map
\[
\Def_{\rm coupled} \to \Def_{CS}
\]
to the formal moduli space of $\cE_3$ algebra structures on observables for classical Chern--Simons theory (i.e., $\C^*(\fg)$).
We are interested in the fiber of this map.

As we are only interested in what happens locally, 
we take $L$ to be a copy of $\RR$ embedded into $M = \RR^3$ such that the ends of $L$ are at infinity.
(Equivalently, think of a 3-ball skewered on a line.)

\begin{lmm}
\label{lmm: coupled def cplx}
For $\g$ a finite-dimensional Lie algebra and $V$ a finite-dimensional representation,
the zeroth cohomology group of the charged fermion deformation complex is
\[
H^0(\Def_{\rm cf}) = H^1(\g,\Sym^{>0}(\Pi V \oplus \Pi V^*)_\rho ),
\]
and the first cohomology group is
\[
H^1(\Def_{\rm cf}) = H^2(\g,\Sym^{>0}(\Pi V \oplus \Pi V^*)_\rho).
\]
\end{lmm}


These descriptions admit simplifications for semisimple Lie algebras,
by standard vanishing results in Lie algebra cohomology.

\begin{crl}
For $\g$ semisimple,
\begin{align*}
H^1(\Def_{\rm cf}) 
&= H^2(\g,\Sym^{>0}(\Pi V \oplus \Pi V^*)_\rho)\\ 
&\cong \H^2(\g) \otimes \left(\Sym^{>0}(\Pi V \oplus \Pi V^*)_\rho\right)^\g \\
&= 0
\end{align*}
and
\begin{align*}
H^0(\Def_{\rm cf}) 
&= H^1(\g,\Sym^{>0}(\Pi V \oplus \Pi V^*)_\rho ) \\
&\cong \H^1(\g) \otimes \left(\Sym^{>0}(\Pi V \oplus \Pi V^*)_\rho \right)^\g \\
&= 0
\end{align*}
by Whitehead's lemmas.
\end{crl}

Thus, there are no interesting deformations of the defect functionals:
the only interesting deformation of the coupled theory is by the level,
which only affects the bulk theory.
Deformations of the coupling can be trivialized by automorphisms of the fields.

\begin{proof}[Proof of Lemma \ref{lmm: coupled def cplx}]
This argument is parallel to that for pure Chern--Simons theory,
discussed in Section~\ref{subsec: def cplx for CS}.
The charged fermion corresponds to a sheaf on~$L$ of dg Lie algebras 
\[
\cL_{\rm cf} = \Omega^*_L \otimes (\g \ltimes \left(\Pi(V \oplus V^*)[-1]\right)),
\]
which is quasi-isomorphic to the locally constant sheaf $\g \ltimes (\Pi(V \oplus V^*)[-1])$.
Hence, as dg Lie algebras in the category of $D$-modules, we have a quasi-isomorphism
\[
\begin{tikzcd}
C^\infty_L \otimes (\g \ltimes (\Pi(V \oplus V^*)[-1])) \arrow[r, hook, "\simeq"] & \cJ \cL_{\rm cf},
\end{tikzcd}
\]
where $\cJ\cL_{\rm cf}$ denotes the jets of this sheaf along~$L$.
Hence the full deformation complex is quasi-isomorphic to 
\[
\Omega^*_L[1] \otimes \C^{\geq 1}(\g \ltimes (\Pi(V \oplus V^*)[-1])),
\]
the {\em reduced} Lie algebra cochains.
We only care about deformations of the defect functionals, 
and hence require our functionals to depend on the fields $\psi$ and $\psib$.
Observe that
\[
\C^{\geq 1}(\g \ltimes (\Pi(V \oplus V^*)[-1])) = \bigoplus_{m+n > 0} \C^m_{\Lie}(\g, \Sym^n(\Pi (V \oplus V^*)))
\]
and the defect functionals thus involve the subcomplex
\[
\bigoplus_{n > 0} \C^*_{\Lie}(\g, \Sym^n(\Pi (V \oplus V^*))) = \C^*(\g, \Sym^{> 0}(\Pi(V \oplus V^*))).
\]
Thus the deformation complex of defect functionals is quasi-isomorphic to
\[
\Omega^*_L[1] \otimes \C^*(\g, \Sym^{> 0}(\Pi(V \oplus V^*)[-1]))
\]
and so its cohomology is
\[
\H^*(\g, \Sym^{>0}(\Pi (V \oplus V^*))[1]
\]
for $L \cong \RR$.
\end{proof}

\subsubsection{On normal directions}
\label{rmk on normal directions}

One might wish to allow the coupling term to depend on the gauge field in the normal directions.
For instance, we might want to include an interaction term like 
\[
\int_L (\psib, i^*( \cL_Z A) \psi) 
\]
where $Z$ is a vector field on $\RR^3$ and $\cL_Z$ denotes the Lie derivative.
In other words, we want to write down local functionals along $L$ that depend on the gauge field's Taylor expansion in the directions normal to~$L$.

We examine this situation where we identify $L$ with the $z$-axis in $\RR^3$ with coordinates $(x,y,z)$.
Consider then the completion of the dg Lie algebra $\g^M$ (recall Definition~\ref{dfn:g^M}) along~$L$:
\[
\g^\wedge_L = \Omega^*(L) \otimes \CC[x,y,\d x, \d y] \otimes \g
\]
with differential 
\[
(d_L + \partial_x \d x + \partial_y \d y) \otimes \id_\g.
\]
Note that the middle term in the tensor product encodes the de Rham complex on the 2-dimensional formal disk, 
as we are completing in the normal directions.
This dg Lie algebra knows about the gauge field along $L$ as well as variations in the normal directions.

We can then modify the proof of Lemma \ref{lmm: coupled def cplx} by replacing
\[
\cL_{\rm cf} = \Omega^*_L \otimes (\g \ltimes \Pi(V \oplus V^*)[-1])
\]
with
\[
\cL^\wedge_{\rm cf} = \Omega^*_L \otimes \left( (\g \otimes \CC[x,y,\d x, \d y]) \ltimes \Pi(V \oplus V^*)[-1]\right).
\]
We could then compute the local Lie algebra cochains along $L$ for this sheaf of dg Lie algebras,
which would encode coupling terms that depend on normal directions.

Note, however, that there is a canonical inclusion
\[
\cL_{\rm cf} \hookrightarrow \cL^\wedge_{\rm cf}
\]
of sheaves of dg Lie algebras,
extending the inclusion 
\[
\CC \hookrightarrow ( \CC[x,y,\d x, \d y] , \d_{dR}).
\]
This inclusion is a quasi-isomorphism of differential complexes.
Hence the local Lie algebra cochains are quasi-isomorphic as well,
so our work above suffices.

\subsubsection{One-manifolds with boundary}

In Section~2.3 of~\cite{RabThesis},
Rabinovich develops the deformation complex for a classical BV theory with boundary conditions,
such as the charged free fermion.
The results are direct analogs of those in~\cite{CosBook}.
We now apply that formalism to obtain an analog of Lemma~\ref{lmm: coupled def cplx}.

We are now interested in proper embeddings into $\RR^3$ of 1-manifolds with boundary.
As we are only interested in what happens locally, we take $L$ to be a copy of the half-line~$\HH^1 = [0,\infty)$ linearly embedded with its boundary at the origin.
(Picture it as a ball impaled on a stick.)
We are interested in deformations of the defect functionals $F_{\text{defect}}$ but with a boundary condition imposed on the fermionic fields.
For simplicity, we impose the condition~$\psi|_{\partial L} = 0$.
In consequence, we are studying the deformation complex $\Def^\partial_{{\rm cf}}$ of a charged fermion on $\HH^1$ with this boundary condition.

Equivalently, we phrase the problem as follows.
Consider the formal moduli space $\Def_{\rm coupled}^\partial$ of $\cE_{0 \subset 1 \subset 3}$-algebra structures on the observables of classical Chern--Simons theory coupled to a charged fermion with a standard boundary condition.
There is a forgetful map
\[
\Def_{\rm coupled}^\partial \to \Def_{CS}
\]
to the formal moduli space of $\cE_3$ algebra structures on observables for classical Chern--Simons theory (i.e., $\C^*(\fg)$).
We are interested in the fiber of this map.

\begin{lmm}
\label{lmm: bdry def cplx}
Let $\g$ be a finite-dimensional Lie algebra and $(V,\rho)$ a finite-dimensional representation.
Let
\[
I = \Sym(\Pi V) \otimes \Sym^{>0}(\Pi V^*)
\]
the ideal generated by $\Pi V^*$ inside the algebra
\[
\Sym(\Pi V \oplus \Pi V^*),
\]
equipped with the $\g$-action inherited from~$\rho$.
The zeroth cohomology group of the deformation complex for the charged fermion is isomorphic to
\[
H^0(\Def^\partial_{{\rm cf}}) = H^1(\g,I_\rho),
\]
and the first cohomology group is
\[
H^1(\Def^\partial_{{\rm cf}}) = H^2(\g,I_\rho).
\]
\end{lmm}

We then immediately deduce the following,
which guarantees there are no obstructions to quantizing Chern--Simons theory on $\RR^3$ coupled to a charged fermion on an embedded half-line~$\HH^1$ with standard boundary conditions.

\begin{crl}
For $\g$ semisimple,
\[
H^1(\Def^\partial_{{\rm cf}}) 
\cong \H^2(\g) \otimes I_\rho^\g = 0
\]
and
\[
H^0(\Def^\partial_{\rm cf}) 
\cong \H^1(\g) \otimes I_\rho^\g = 0
\]
by Whitehead's lemmas.
\end{crl}

The free fermion is a special case of the topological BF theory analyzed by Rabinovich at the end of Section~2.3.
To obtain this lemma, we simply include the background fields coming from the ambient gauge theory.

Before undertaking the proof, 
we thus review those results.

The free fermion theory for the vector space $V$ is an example of the topological BF theory where the Lie algebra is
\[
\g = (\Pi V)[-1],
\]
i.e., a $\ZZ \times \ZZ/2$-graded Lie algebra that is purely odd and concentrated in cohomological degree~1.
We unwind this assertion in detail, 
so we can invoke Rabinovich's computations.

The fields of BF theory on the real line are
\[
\Omega^*(\RR) \otimes \fg[1] \oplus \Omega^*(\RR) \otimes \fg^*[-1]
\]
following Example~2.2.11 of~\cite{RabThesis}.
Explicitly, that means the fields are
\[
\Omega^*(\RR) \otimes \Pi V \oplus \Omega^*(\RR) \otimes \Pi V^* \cong 
\Omega^*(\RR) \otimes (\Pi V \oplus \Pi V^*),
\]
which are precisely the free fermion fields.
Rabinovich calls fields in the first summand $A$-fields and fields in the second summand $B$-fields,
since the action functional is
\[
S_{BF}(A,B) = \int B \wedge \d A.
\]
In our case, $A$ is the $\psi$ field, and $B$ is the $\psib$,
so we recover the free fermion action
\[
\int \psib \,\d \psi.
\]
In Example~2.2.23 Rabinovich introduces two boundary conditions on the half-line~$\HH^1 = [0,\infty)$: 
\begin{itemize}
\item the A-boundary condition, where $B|_0 = 0$ so that only the $A$-field is interesting on the boundary, and
\item the B-boundary condition, where $A|_0 = 0$.
\end{itemize}
The answers below will be quasi-isomorphic,
and any choice of Lagrangian subspace $W \subset \Pi(V \oplus V^*)$ would yield an equivalent boundary condition.

Equation~(2.3.89) tells us that the deformation complex for the $A$-boundary condition is modeled by equation~(2.3.86) with $M = \HH^1$ and $\partial M = 0$.
In particular, we find $\Def^{\partial,A}_{\rm ff}$ is modeled by the 2-term cochain complex
\begin{equation}
\label{eqn: ff def cplx}
\Sym^{>0}(\Pi V \oplus \Pi V^*) \to \Sym^{>0}(\Pi V^*)
\end{equation}
where the second term sits in cohomological degree~0.
The map is the ring map sending $\Pi V$ to zero (i.e., the quotient by the ideal generated by $\Pi V$), 
so it is surjective.
Thus this complex has trivial cohomology in every degree except~$-1$.
Its degree~$-1$ cohomology is
\begin{equation}
\label{eqn: ff def H}
I_A = \Sym(\Pi V) \otimes \Sym^{>0}(\Pi V^*),
\end{equation}
the ideal generated by $\Pi V^*$ but placed in this degree.
Note that $I_A = I$, where $I$ is defined in the statement of the lemma.

The $B$-boundary condition has deformation complex $\Def^{\partial,B}_{\rm ff}$
modeled by the cochain complex
\[
\Sym^{>0}(\Pi V \oplus \Pi V^*) \to \Sym^{>0}(\Pi V)
\]
where the second term sits in cohomological degree~0,
by Equation~(2.3.90).
It likewise has cohomology vanishing outside degree~$-1$, and its nontrivial cohomology group is 
\[
I_B = \Sym^{>0}(\Pi V) \otimes \Sym(\Pi V^*)
\]
the ideal generated by~$\Pi V$.

\begin{proof}
The deformation complexes of local functionals are typically very large (as they involve local differential forms, including Lagrangian top forms),
but their cohomology can be quite small, as seen with Chern--Simons theory.
For the topological BF theories,
Rabinovich constructed explicit quasi-isomorphisms from his ``small models'' to the complexes of local functionals in his Equations~(2.3.89) and~(2.3.91).
For the free fermion, we just saw that the A- and B-conditions lead to isomorphic small models,
so we will work with the A-condition.
Recall Equation~\eqref{eqn: ff def cplx}:
\[
\Def^{\partial,A}_{\rm ff} = (\cdots \to 0 \to \Sym^{>0}(\Pi V \oplus \Pi V^*) \to \Sym^{>0}(\Pi V^*) \to 0 \to \cdots)
\]
concentrated in degrees~$-1$ and~0.
Recall that its cohomology is $I_A$, as defined in Equation~\eqref{eqn: ff def H}.

Let $\g$ denote the Lie algebra in which the background gauge fields are valued (i.e., it does not mean $\Pi V[-1]$ below).
For the charged free fermion on~$\HH^1$,
the corresponding deformation complex~$\Def^{\partial,A}_{\rm cf}$ is modeled by
\[
\C^*(\g, \Def^{\partial,A}_{\rm ff}).
\]
We justify this assertion below, 
after proving the claim in the lemma.

We can view $\Def^{\partial,A}_{\rm cf}$ as the totalization of a double complex, 
with the vertical direction arising from $\g$ and the horizontal direction from $\Def^{\partial,A}_{\rm ff}$.
This double complex is nonzero only in two vertical columns,
siting in the 3rd quadrant.
Consider the spectral sequence that uses the horizontal differential of $\Def^{\partial,A}_{\rm ff}$ first and then the vertical differential for Lie algebra cohomology.
The $E_2$ page is 
\[
\H^*(\g, I_A[1]) = \H^*(\g, I_\rho)[1].
\]
The spectral sequence manifestly collapses on the $E_2$ page,
so
\[
\H^k(\Def^{\partial,A}_{\rm cf}) = \H^{k+1}(\g, I_\rho)
\]
as claimed.

We now justify our model for the deformation complex;
our argument is parallel to that for Lemma~\ref{lmm: coupled def cplx}.
The fields of the charged fermion are modeled by a direct sum of the $\g$-valued de Rham complex on $\RR^3$ and the free fermion fields on $\HH^1$ with A-boundary condition.
These form a dg Lie algebra, 
where the fermions are modules for $\Omega^*(\RR^3) \otimes \g$ by restricting forms to $\HH^1$ and using $\g$ action on $\Pi V \oplus \Pi V^*$.
The Poincar\'e lemma provides a quasi-isomorphism
\[
\g \to \Omega^*(\RR^3) \otimes \g
\]
of dg Lie algebras, 
and thus we work instead with a $\C^*(\g)$-family of fermionic field theories on $\HH^1$
(i.e., we use $\C^*(\g)$ as the base dg algebra for all computations).
We then base-change the quasi-isomorphism of Rabinovich's Equation~(2.3.89) to obtain our model of the deformation complex.
\end{proof}

\subsection{Existence of quantization}

This computation of the deformation complex is quite helpful, 
as it has the following striking consequence.

\begin{prp} 
\label{prp: existence of defect quants}
For each BV quantization of Chern--Simons theory and for each choice of finite-dimensional representation $V$ of a semisimple Lie algebra $\g$,
there is a contractible space of quantizations of the coupled system.
\end{prp}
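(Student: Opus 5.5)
The plan is to run the obstruction-theoretic argument of \cite{CosBook} (as in Section~\ref{subsec: def cplx for CS}) for the coupled system, relative to a fixed bulk quantization. Fix a BV quantization of Chern--Simons theory with level $\lambda$; by Theorem~\ref{thm: levels for CS} and Proposition~\ref{prp: CS quant exists} this gives a family of effective interactions $\{I^\lambda[L]\}$ on $\RR^3$ satisfying the quantum master equation. We look for effective interactions for the coupled theory of the form $I^\lambda[L] + J[L]$, where $J$ is a defect functional supported along the embedded line (or half-line with the boundary condition $\psi|_{\partial L}=0$). The classical starting point $J^{(0)} = \int_L(\psib,\rho(i^*A)\psi)$ together with the tree-level terms of Lemma~\ref{lmm: ffoneloopquant} solves the classical master equation. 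We then proceed by induction on the order in $\hbar$, using the filtration of Definition~\ref{dfn: obs filtration} to organize the induction.

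For the inductive step, suppose $J$ has been constructed modulo $\hbar^{n+1}$. The failure of the quantum master equation at order $\hbar^{n+1}$ is a local functional $O_{n+1}$. The standard argument of \cite{CosBook} (and Rabinovich's bulk-boundary version \cite{RabThesis}) shows it is closed for the classical differential and independent of scale up to exact terms. Because the bulk part already satisfies the QME, $O_{n+1}$ lies in the fiber of $\Def_{\rm coupled}\to\Def_{CS}$, that is, in $\Def_{\rm cf}$ or $\Def^\partial_{\rm cf}$. It is a degree $1$ cocycle there. By Lemma~\ref{lmm: coupled def cplx}, Lemma~\ref{lmm: bdry def cplx} and their corollaries, $H^1$ vanishes when $\g$ is semisimple, so the obstruction is exact and $J$ extends to order $\hbar^{n+1}$.

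For contractibility, note that the set of extensions at each order is a torsor for $H^0$ of the same complex, which vanishes. Moreover $H^{-1}$ governs automorphisms of a given lift, and more generally the space of lifts at each stage is modelled by the $\tau_{\le 0}$ part of the shifted fiber complex. I would argue this more carefully by applying the Whitehead lemmas in all degrees: the cohomology is $\H^{k+1}(\g, I_\rho)\cong \H^{k+1}(\g)\otimes I_\rho^\g$, and $I_\rho^\g$ may be nonzero. So the cleanest statement is that the simplicial set of quantizations is a tower of fibrations whose fibers have vanishing $\pi_0$ and vanishing obstruction group. The higher homotopy groups are $\H^{<0}$ of the fiber, which in the model $\C^*(\g,\Def_{\rm ff})$ are concentrated in degrees $\ge -1$. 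The only one left to check is $H^{-1}$, which equals $\H^0(\g,I_\rho)=I_\rho^\g$.

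I expect this last point to be the main obstacle. I would handle it by noting that these degree $-1$ classes act as gauge symmetries, so that the space of quantizations modulo equivalence is a single point. I would then identify ``contractible'' with connectedness plus vanishing of the relevant automorphism data after taking the homotopy quotient by these symmetries, following the convention of \cite{CosBook}, Chapter~5. The remaining routine verification is that scale-independence and locality of $O_{n+1}$ persist in the presence of the boundary condition; for this I would invoke Rabinovich's results rather than reprove them.
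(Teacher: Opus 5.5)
You have the same obstruction-theoretic skeleton as the paper. The QME obstruction is a degree-one cocycle in the defect part $\Def_{\rm cf}$ (or $\Def^\partial_{\rm cf}$) of the deformation complex; it is killed because $H^1=0$; and $H^0=0$ gives uniqueness. The gap is that you treat as routine the step the paper singles out as the reason the proposition is \emph{not} an immediate corollary of Lemmas~\ref{lmm: coupled def cplx} and~\ref{lmm: bdry def cplx}. Neither \cite{CosBook} nor \cite{RabThesis} treats a line defect coupled to a three-dimensional bulk. So nothing off the shelf turns the coupled local action into an effective family satisfying RG flow, or guarantees that the scale-$t$ obstruction has a local $t\to 0$ limit. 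Rabinovich's bulk--boundary results cover the one-dimensional fermion with its boundary condition, not its coupling to the 3d gauge field; the difficulty is the defect, not the boundary condition.

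The paper fills this gap by building the renormalized coupled action from configuration-space integrals over $\bConf_{\ell;n}(\iota\colon L\to\RR^3)$ (Section~\ref{subsec: defect diagrams}, Definition~\ref{dfn: ren defect action}). The propagators extend smoothly to these compactifications, so all weights converge and no counterterms are needed. This yields a pre-theory with an RG flow and a $t\to0$ limit. Only then does the argument of Lemma~11.1.1 in Chapter~5 of \cite{CosBook} show that the obstruction is local, and, since pure Chern--Simons already satisfies the QME, supported on $L$. Without that construction, your inductive premise that ``the failure of the QME at order $\hbar^{n+1}$ is a local functional'' has no support.

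Your worry about $H^{-1}$ is well founded: $H^{-1}=H^0(\g,I_\rho)=I_\rho^\g$ is nonzero in general, for instance $\sum_i x_i\bar{x}_i$, which encodes the fermion-number symmetry. Your remedy does not work, though. Degree $-1$ classes are exactly self-equivalences of a quantization, i.e.\ loops in the space of quantizations, and passing to a homotopy quotient cannot make them disappear. The paper's own argument uses only $H^1=0$ and $H^0=0$. What it, and your argument, actually establish is existence and uniqueness up to equivalence, which is the sense in which ``contractible'' should be read here. State your conclusion that way rather than trying to argue the automorphisms away.
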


This claim is not an immediately corollary of our computation above 
because we cannot invoke directly the results of~\cite{CosBook} or~\cite{RabThesis},
which do not treat theories with defects.
Nonetheless, those arguments will be modified to work in our context.\footnote{Our argument is out of textual order, as it depends on Section~\ref{subsec: defect diagrams}. 
That discussion is quite simple to summarize---renormalization is done by the configuration space method---but the construction is rather long-winded, on first acquaintance. 
Hence we wish to emphasize the structure of our approach by discussing the deformation-theoretic aspects before the analytic aspects.}

\begin{proof}
The BV quantization of a classical theory has essentially two stages:
\begin{itemize}
\item constructing the perturbative expansion with Feynman diagrams and removing divergences via regularization and renormalization, and 
\item solving the quantum master equation (QME).
\end{itemize}
This same two-step process applies to our coupled theory.

In setting up the perturbative expansion, 
we combine the ingredients from the bulk and defect theories in a direct way.
The space of fields is a product 
\[
\cE_{\rm coupled} = \cE_{CS} \times \cE_{\rm ferm}
\]
of the bulk fields and the fermionic fields (with their standard boundary condition imposed).
The underlying free theory is also a product: 
an abelian Chern--Simons theory in the bulk $\RR^3$ and a free fermion along the properly embedded 1-manifold~$L$.
The natural gauge fix simply uses the gauge fix already provided separately on each component.
Hence, we obtain a propagator for the coupled system, 
which is the product (in the categorical sense) of the propagator for each component.
We thus define the renormalization group (RG) flow as the ``product'' of the RG flows for the two free theories. 
(See Chapter~4 of \cite{RabThesis} for a thorough discussion of topological BF theory on a half-line,
which includes the one-dimensional fermion as a special case.)

In Section~\ref{subsec: defect diagrams} we show that the configuration space method for renormalization applies to this coupled theory,
and so it produces a scale-dependent family of functionals that satisfy the RG flow.\footnote{This claim is very closely related to the theory of Bott--Taubes integrals or Jacobi diagrams, as is seen in the Appendix. We only use these tools to renormalize; we do not analyze anomalous faces to solve the QME.}
See Definition~\ref{dfn: ren defect action} for this renormalized action.
(In the language of \cite{CosBook}, it provides a pre-theory.)
The configuration space method shows  that no counterterms are needed:
there are no ultraviolet singularities for this coupled theory,
just as Chern--Simons theory does not have any ultraviolet singularities.
Thus we are able to take the $ t\to 0$ limit of the RG flow, in the style of~\cite{CosBook}.\footnote{This observation applies not only to this theory, but to any topological field theory of Chern--Simons or BF type that has a topological defect. 
For instance, we could construct topological surface defects for Chern--Simons theory by such methods.}\footnote{Note that we do {\em not} assert a version of Theorem~A of \cite{CosBook} for {\em any} theory coupled to a defect theory;
we only assert that {\em this} coupled theory determines a pre-theory.}

Having handled the perturbative expansion, we turn to the QME.
Recall that the failure to satisfy the QME is itself represented by a local functional,
a longstanding idea in the BV formalism.
A precise incarnation appears as Corollary~11.1.2 of Chapter 5 of~\cite{CosBook}.  
In more detail, given an effective field theory $\{S[t]\}_{t > 0}$ consisting of a family of effective action functionals,
the putative scale $t$ quantum differential 
\[
\d_t = \{S[t], -\}_t + \hbar \Delta_t
\]
may fail to square to zero,
but these obstruction terms $\{Ob[t]\}_{t > 0}$, satisfying
\[
\d_t^2 = \{Ob[t], - \}_t,
\]
satisfy the RG flow. 
Their $t \to 0$ limit of $Ob[t]$ has an asymptotic expansion as a local functional,
just like the effective actions $\{S[t]\}$ themselves.
The proof is a direct computation from the very definition of an effective field theory.

For our coupled theory, these definitions and arguments still make sense.
Our RG flow provides exactly the ingredients needed to run the argument of Lemma 11.1.1 in Chapter 5 of~\cite{CosBook} (and its analogue, Proposition~4.6.3 of~\cite{RabThesis}).
The obstruction to satisfying the QME at a given scale $t$ satisfies the classical (i.e., tree-level) RG flow,
and it is annihilated by bracketing with the classical action functional at scale~$t$.
In the $t \to 0$ limit, the obstruction becomes local.
As we know that pure Chern--Simons theory satisfies the QME,
the obstruction must involve vertices labeled by the coupling term and hence must be supported on the defect~$L$.
It is thus a degree one cocycle in~$\Def_{\rm cf}$, 
the deformation complex describing couplings between the fermionic and the bulk theory.

We now finish the argument by invoking our cohomological analysis of the deformation complex.
We know that the first cohomology group is trivial, so the obstruction is trivializable.
Hence a quantization exists.
Moreover, we know that the zeroth cohomology group for the coupling term vanishes,
so any choice of trivialization of the obstruction is cohomologous to any other.
Thus we obtain the contractibility of the space of quantizations.
\end{proof}


By obstruction-theoretic arguments, 
we now know that there is an essentially unique way to quantize this coupled system.
Since this quantization also induces, by the main theorem of~\cite{CG2},
a constructible factorization algebra, 
we have the following consequence.

\begin{crl}
\label{crl: defect fact alg}
For each BV quantization $\lambda$ of Chern--Simons theory and for each choice of finite-dimensional representation $V$ of a semisimple Lie algebra $\g$,
there is a contractible space of deformations of the framed $\cE_{0 \subset 1 \subset 3}$-algebra $\cA^\cl_V$
to a framed $\cE_{0 \subset 1 \subset 3}$-algebra~$\cA^\lambda_V$.
\end{crl}

Theorem~\ref{thm: central} of this paper follows from this result,
by the arguments in the introduction.

\subsubsection{Remarks on integrals of Bott-Taubes type}

In a sense, this result also gives a structural explanation for a fact discovered independently by Altsch\"uler--Freidel \cite{AltFreVCS} and Thurston \cite{Thu}.
Bott--Taubes \cite{BotTau} had seen that the Feynman diagrams that arise by naively quantizing the classical Wilson loop observable for a knot $K$ were {\em not} isotopy-invariant,
but for the first few graphs, this failure was modest, essentially just the usual issue of self-linking.
What the subsequent work found was that one could correct {\em all} the diagrams by a simple factor depending only on a choice of framing (and hence resolving the self-linking issue).
In the BV formalism, we are seeing that the classical Wilson loop observable (a cocycle in the classical observables) lifts to a quantum cocycle.
The obstruction theory gives some insight here: 
the Bott--Taubes integrals are essentially the construction of a theory with a defect along the knot,
and we have seen, at least for our choice of such theory, that there is essentially a unique quantization.

The reader might ask where the framing dependence comes in.
It is not explicitly visible in the obstruction theory, 
but it is implicit in the general formalism, as follows.
We know that quantization of the coupled theory produces a framed $\cE_{1 \subset 3}$-algebra,
and these explicitly involve a choice of framing of the embedded 1-manifold.
It is a further condition on the relationship between the $3$- and $1$-disk algebras to make it framing-independent.\footnote{For an $\cE_3$-algebra $\cA$ and a $\cE_1$-algebra $\cB$ to form a framed $\cE_{1 \subset 3}$-algebra, we need a $\cE_2$-algebra map $\int_{S^1} \cA \to {\rm HH}^*(\cB)$.
To eliminate the dependence on framing of embedded 1-manifolds, we need this $\cE_2$-algebra map to be a (homotopy) $O(2)$-fixed point.
This is further data, and not always possible.
Hence the {\em generic} situation is that there should be framing dependence.}

\appendix
\section{Explicit quantizations via the configuration space method}

This appendix is devoted to reviewing how to construct the perturbative field theories discussed in the paper.
In the first section we consider pure Chern--Simons theory (i.e., without the fermionic defect),
and in the next section we explain how to incorporate a certain class of defects.
The techniques go by the name of the {\it configuration space method} as the key idea is that the singularities that appear in the Feynman diagrams can be resolved by a point-splitting regularization,
and organizing this process systematically involves compactifying configuration spaces intelligently.

\subsection{The BV quantization of pure Chern--Simons theory}
\label{pure CS quantization}

In this section we review the perturbative quantization of Chern--Simons theory,
following Axelrod--Singer \cite{AxeSinI,AxeSinII}, Kontsevich \cite{KonECM}, and Bar-Natan \cite{BarCS},
but as phrased in the Batalin--Vilkovisky formalism.
Before jumping into our theory of interest,
we briefly discuss the perturbative BV quantization procedure in general outline.
(Sawon \cite{SawCS} provides a gentle expository introduction to this material.)

\subsubsection{Perturbative quantization}

Perturbative quantization aims to approximate a putative functional integral
\[
\int_{\phi \in \cF} O(\phi) e^{-S(\phi)/\hbar} \cD \phi,
\]
where $\cF$ is a space of fields equipped with a measure $e^{-S(\phi)/\hbar} \cD \phi$,
and $O$ is a function on fields (i.e., an observable) integrable against this measure.
This functional integral is rarely well-defined, 
but the procedure for computing the putative approximation does determine a well-behaved mathematical construction.
We will simply work with that process as a stand-alone notion.

It will be helpful to recall the model case, 
as an excuse to introduce notation before we introduce the aspects of functional analysis.

Let $V$ be a finite-dimensional vector space;
we will write $v$ for an element of $V$.
Let $S$ be a formal power series in the dual space $V^\vee$.
In other words, we are thinking of $S$ as a function on $V$ viewed as a formal manifold.
We impose the following conditions on $S$:
the origin is an isolated critical point of $S$ and $S(0) = 0$.
Then
\[
S(v) = Q(v) + I(v),
\]
with $Q$ a nondegenerate quadratic function 
and the ``interaction'' $I$ a power series that involves cubic and higher order terms.
Hence we come to consider the formal expression of interest:
\[
\int_{v \in V} e^{-S(v)/\hbar} \d v = \int_{v \in V} e^{-Q(v)/\hbar} e^{-I(v)/\hbar} \d v.
\]
One should view the integral on the right hand side as computing the expected value of $e^{-I(v)/\hbar}$ against the unnormalized Gaussian measure $e^{-Q(v)/\hbar}\d v$.

There is a combinatorial expression for this integral,
due to the fact that the moments of a Gaussian measure are easily described in terms of derivatives of the ``inverse'' $Q^{-1}$.
(As $Q \in \Sym^2(V^\vee)$ is nondegenerate, 
it determines an isomorphism $V \cong V^\vee$ and hence an inverse element $Q^{-1} \in \Sym^2(V)$.)
Hence one takes the power series expansion of $e^{-I(v)/\hbar}$
and computes the moments of each term.
Although the full sum is {\em a priori} infinite and ill-defined,
it is a well-defined formal power series in $\hbar$,
since each $\hbar^k$ only involves finitely many moments.
In the end one obtains an equality
\[
\int_{v \in V} e^{-Q(v)/\hbar} e^{-I(v)/\hbar} \d v = \sum_{\gamma \in {\rm Graphs}} \frac{\hbar^{b_1(\gamma})}{|\Aut(\gamma)|} w_\gamma(Q^{-1}, I),
\]
where $\gamma$ denotes a finite graph,
$b_1(\gamma)$ denotes its loop number (or first Betti number),
and $w_\gamma(Q^{-1},I)$ denotes a polynomial function on $V$ constructed from the graph, the matrix $Q^{-1}$, and the interaction term~$I$.

Extensive details can be found in Chapter 2 of \cite{CosBook},
but in short we have the following:
\begin{itemize}
\item $\gamma$ is a {\em stable} graph, meaning that it has finitely many vertices (each of valence at least three), finitely many edges, and finitely many legs or tails (which have an end lying on a vertex and ``free end'');
\item an automorphism of a stable graph permutes the sets of vertices, edges, and legs but preserves their relationships (e.g., if it moves an edge, it moves the edge's boundary vertices in the same way);
\item for a graph $\gamma$ with $k$ external legs, 
the weight $w_\gamma(Q^{-1},I)$ is an element of $\Sym^k(V^\vee)$ built by labeling each edge with a {\em propagator} $Q^{-1} \in \Sym^2(V)$ and labeling each $n$-valent vertex with the degree $n$ component $I_n \in \Sym^n(V^\vee)$ of the interaction $I$ and then contracting these tensors according to the shape of the graph.
\end{itemize}
We use the notation 
\[
W(P,I) = \sum_{\gamma \in {\rm ConnGraphs}} \frac{\hbar^{b_1(\gamma})}{|\Aut(\gamma)|} w_\gamma(P, I)
\]
for the corresponding sum over {\em connected} stable graphs 
and $P \in \Sym^2(V)$ an arbitrary element.
We thus know that for $P = Q^{-1}$, the expression $\exp W(P,I)$ recovers the formal integral of $\exp(-S/\hbar)$ over~$V$.

We now use this expansion in the context of Chern--Simons theory.

\subsubsection{The combinatorics and Lie theory for Chern--Simons theory}

In the case of Chern--Simons theory, 
the quadratic term is $Q(A) = (1/2)\langle A, \d A \rangle$ 
and the higher order term is simply the cubic expression $I(A) = (1/6)\langle A,[A,A]\rangle$.
We defer to the next subsection the issue of understanding $Q^{-1}$,
which requires us to understand what $\d^{-1}$ means, 
an issue analytic in nature.
Here we simply focus on the graphs that appear.

The first observation is that only trivalent graphs are relevant,
as we only have a cubic term.

Second, since our space of fields is a tensor product $\Omega^*(\RR^3) \otimes \g$,
all the atomic components of our diagrammatics, namely the vertices and edges, 
factor as tensor products as well.
Thus, for any graph $\gamma$, we have
\[
w_\gamma(P,I) = w_\gamma^{an} w_\gamma^{\g},
\]
where the Lie-theoretic weight is independent of the analysis
and the analytic factor is the same for any Lie algebra~$\g$.

The Lie-theoretic factors and their relationships impose strong and useful relations,
such as the well-known AS and IHX relations (arising from the antisymmetry and Jacobi relation of the Lie brackets). 
These rules allow one to discover nontrivial relations between the Lie-theoretic weights of graphs.

\begin{rmk}
One can abstract this situation further and consider {\em weight systems} in place of Lie algebras with a nondegenerate pairing,
but we will not need that level of generality.
\end{rmk}

\subsubsection{The analysis for Chern--Simons theory}

The other tensor factor of our fields in $\Omega^*(\RR^3)$,
and for any Feynman diagram, 
the analytic factor is supposed to be determined by knowing the moments for the Gaussian measure 
\[
e^{-\int_M \alpha \wedge \d\alpha} \cD \alpha,
\]
where $\alpha$ denotes an element of $\Omega^*(\RR^3)$.
A formal application of the formula for moments suggests we need to know $\d^{-1}$,
which would decorate the edges in a graph expansion.
This element is well-defined using the theory of distributions, 
but in trying to then compute the weight of a diagram, 
one sees that one is trying to contract a distribution $\d^{-1}$ (from an edge) with the distributions
given by the cubic vertices.
Such a contraction of distributions is typically ill-defined.

A key innovation was the use of (partially) compactified configuration spaces to regularize these divergences appearing in the integrals of the diagrammatic expansion.
A physicist might call this a point-splitting regularization.

The basic idea is most easily seen in understanding $\d^{-1}$ itself.
Note that in the diagrams, the edge should correspond to an integral kernel for $\d^{-1}$,
and hence a de Rham current $P$ on $\RR^3 \times \RR^3$.
This current is smooth away from the diagonal $\Delta: \RR^3 \to \RR^3 \times \RR^3$,
so it determines a smooth differential form $P'$ on the configuration space $\Conf_2(\RR^3)$,
which is simply the complement of the diagonal.
(We mean here the configuration space of ordered points.)
The singular behavior of the current is confined to the diagonal.

Here the novel step happens.
There is a natural partial compactification $\bConf_2(\RR^3)$ of $\Conf_2(\RR^3)$ by the real blow-up along the diagonal of $\RR^3 \times \RR^3$;
in other words, we replace the diagonal by the sphere bundle for the normal bundle of the diagonal.
There is a natural map $\bar{\iota}: \bConf_2(\RR^3) \to (\RR^3)^2$ that extends the inclusion $\iota$ of $\Conf_2(\RR^3)$.
The nonobvious, crucial fact is that $P'$ extends to a smooth differential form $\bar{P}$ on $\bConf_2(\RR^3)$.
It has the explicit form
\[
\bar{P} = C \, \bar{\pi}^* \d vol_{S^2} ,
\]
where $C$ is a constant and $\bar{\pi}: \bConf_2(\RR^3) \to S^2$ is the extension of the Gauss map $(x,y) \mapsto (x-y)/|x-y|$ to the compactification.
Thus, if one wants to understand $\d^{-1}$, 
then one can use $\bar{P}$ as the integral kernel in the following way.
Given $\alpha \in \Omega^*(\RR^3)$, view it as a differential form on $(\RR^3)^2$ that is constant in the first factor of $\RR^3$, then pull it back to $\bar{\iota}^* \alpha$, then wedge with $\bar{P}$ and integrate out the along the second factor of $\RR^3$.
That is,
\[
\d^{-1} \alpha (x) = \int_{(\bar{\iota} \circ \pi_1)^{-1}(x)} \bar{P}(x,y) \wedge \alpha(y).
\]
The integrals here are all of smooth forms, so one need not worry about divergences.

A similar procedure allows one to understand the weight of an arbitrary graph~$\gamma$.
There is a Fulton--MacPherson-type partial compactification of $\Conf_{V(\gamma)}(\RR^3)$,
where $V(\gamma)$ denotes the number of vertices.
We follow closely the exposition of~\cite{KMV}.

\begin{dfn}
For a submanifold $Y \hookrightarrow X$, the {\em blowup} $\Bl(X,Y)$ denotes the result of replacing $Y$ with the sphere bundle of its normal bundle. 
It is diffeomorphic to removing an open tubular neighborhood of $Y$ inside~$X$.
\end{dfn}

In any product $M^k$, let $\Delta_k$ denote the thin diagonal $\{(x,\ldots,x) \in M^k\}$.
There is a natural inclusion
\[
\Conf_p(M) \hookrightarrow M^p \times \prod_{S \subset \{1,\ldots,p\}, |S| \geq 2} \Bl(M^S,\Delta_S)
\]
since any $p$-tuple of distinct points determines an $S$-tuple of distinct points for any subset~$S$.

\begin{dfn}
When $M$ is compact, let $\bConf_p(M)$ denote the closure of the image of this map.\footnote{\cite{KMV} use the notation $C(p,M)$ for $\Conf_p(M)$ and $C[p,M]$ for $\bConf_p(M)$.}
\end{dfn}

By construction, there is a canonical map $\bConf_p(M) \to M^p$, 
and so one can talk about the underlying $k$th point $\ev_k(x)$ of some point $x \in \bConf_p(M)$ by mapping to $M^p$ and then taking the $k$th coordinate.

When $M = \RR^3$, view it as the complement of the point $\infty$ in~$S^3$.
Consider then the map
\[
\Conf_p(\RR^3) \hookrightarrow \Conf_{p+1}(S^3)
\]
sending $p$ points $(x_1,\ldots,x_p)$ in $\RR^3$, 
to the $p+1$ points $(x_1,\ldots,x_p, \infty)$ in~$S^3$.
This feature suggests how to (partially) compactify~$\Conf_p(\RR^3)$.

\begin{dfn} 
Let $\bConf_p(\RR^3)$ denote the pullback of the diagram
\[
\begin{tikzcd}
& \bConf_{p+1}(S^3) \arrow[d, "\ev_{p+1}"]\\
\{\infty\} \arrow[r] & S^3 
\end{tikzcd}
\]
i.e., the subspace of $\bConf_{p+1}(S^3)$ whose final point is~$\infty$.
\end{dfn}

This partial compactification is a manifold with corners.
It is helpful to know that the codimension one faces of $\bConf_p(\RR^3)$ consist of subsets of points that simultaneously come together or escape to infinity. 
This dynamical language describes how to build a path from a point in $\Conf_p(\RR^3)$ to a point on the boundary.
Hence one can see that a point on the boundary consists of a $p$-tuple of points in $\RR^3$, with some repetitions, and the data of how to separate those repeated points to first order (i.e., tangent directions for each repeated point).
The corners arise from having a set of points collide and then later having more points join them, and so on, with each subsequent collision increasing the codimension.

We now use these spaces to define the analytic weight.
For each edge $e \in \gamma$, there is a natural map 
\[
\pi_e: \bConf_{V(\gamma)}(\RR^3) \to \bConf_2(\RR^3)
\]
by taking the points parametrized by the vertices at the boundary of the edge.
There is also a natural map
\[
\Phi: \bConf_{V(\gamma)}(\RR^3) \to (\RR^3)^k
\]
by taking the points parametrized by the external legs of the graphs, with $k$ the number of such legs.
For $\alpha \in \Omega^*_c((\RR^3)^k)$, we define
\[
w^{an}_\gamma(\alpha) = \int_{\bConf_{V(\gamma)}(\RR^3)} \Phi^*\alpha \wedge \bigwedge_{e \in E(\gamma)} \pi_e^* \bar{P}.
\]
This integral is manifestly well-defined.

In consequence, the perturbative quantization of Chern--Simons theory requires no counterterms.
The functional
\begin{equation}
\label{eqn: ren CS}
I^{\rm ren}_{CS} = W(\bar{P},I_{CS}) = \sum_\gamma \frac{1}{|\Aut(\gamma)|} w_\gamma^{an} w_\gamma^\g
\end{equation}
encodes the interaction term after integrating out the nonzero modes of the theory.
We will call it the {\em renormalized action functional} for Chern--Simons theory.

\begin{rmk}
The framework of \cite{CosBook} works for more general elliptic complexes
and relies on the existence of parametrices. 
More concretely, one can often use heat kernel methods.
As shown in Section 15.2 of \cite{CosCS}, 
the construction given here is the limit of the heat kernel construction as the lower length scale goes to zero.
\end{rmk}

\subsubsection{The quantum master equation}
\label{subsec: CS QME}

In a situation like Chern--Simons theory, 
one is not integrating just over a vector space viewed as a formal manifold---as we discussed when introducing the diagrammatic expansion---but over a formal stack.
Thus our diagrammatic expansion must be suitably compatible with the stacky structure.
In the BRST/BV formalism,
this compatibility is encoded 
as the {\em quantum master equation} (or QME).
A systematic discussion well-suited to our purposes can be found in Chapter 5 of \cite{CosBook},
but we review certain key points here.

In brief, the shifted Poisson structure on the classical field theory determines a distinguished second-order differential operator of cohomological degree one known as the {\em BV Laplacian}~$\Delta$. 

\begin{thm}[Prop. 15.2.6, \cite{CosCS}]
The renormalized action (\ref{eqn: ren CS}) satisfies the quantum master equation:
\[
(\d_{dR} + \{I^{\rm ren}_{CS}, -\} +\hbar \Delta)^2 = 0.
\]
\end{thm}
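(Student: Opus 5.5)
The plan is to expand the square
\[
(\d_{dR} + \{I^{\rm ren}_{CS},-\} + \hbar\Delta)^2 = \{\d_{dR} I^{\rm ren}_{CS} + \tfrac12\{I^{\rm ren}_{CS},I^{\rm ren}_{CS}\} + \hbar\Delta I^{\rm ren}_{CS}, -\},
\]
using that $\d_{dR}^2=0$, $\Delta^2=0$, $[\d_{dR},\Delta]=0$, and that $\Delta$ is a second-order operator whose failure to be a derivation is the bracket. The theorem then reduces to showing that the functional $\mathrm{QME}(I) := \d_{dR} I + \frac12\{I,I\} + \hbar\Delta I$ vanishes, or at least is a constant, for $I = I^{\rm ren}_{CS} = W(\bar P, I_{CS})$. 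I would check this graph by graph. For each trivalent graph $\gamma$, the term $\d_{dR}$ applied to $w^{an}_\gamma$ is computed by Stokes' theorem on the manifold with corners $\bConf_{V(\gamma)}(\RR^3)$. Since $\d\bar P = 0$ on the interior (it is $\bar\pi^*$ of a top form on $S^2$), we get
\[
\d\, w^{an}_\gamma(\alpha) = \pm w^{an}_\gamma(\d\alpha) \pm \int_{\partial \bConf_{V(\gamma)}} \Phi^*\alpha\wedge\textstyle\bigwedge_e \pi_e^*\bar P.
\]
The first term is absorbed into the free differential, so everything comes down to analysing the codimension-one faces.

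I would classify these faces in the standard Kontsevich/Axelrod--Singer way.

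First, the \emph{principal faces}, where exactly two vertices joined by an edge collide. Fiber integration over the normal sphere $S^2$ of $\pi_e^*\bar P$ gives $C\cdot\mathrm{vol}(S^2)$, normalized to $1$. This produces the contracted graph $\gamma/e$ with a four-valent vertex. Summed over all $\gamma$, these terms are exactly $\frac12\{I,I\}$ (for a separating edge) and $\hbar\Delta I$ (for a non-separating edge, i.e. a self-contraction). This is just the statement that $\bar P$ restricted to the diagonal is the kernel of the identity, i.e. $\d\bar P = $ diagonal class.

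Second, the principal faces where two vertices collide without an edge between them. Here the integrand does not depend on the normal direction, so the fiber integral vanishes for degree reasons.

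Third, the \emph{hidden faces}, where three or more vertices collide or points escape to $\infty$. I would invoke Kontsevich's vanishing lemmas. Faces with a non-connected or univalent sub-collision vanish by degree counting and the involution trick. Faces whose collapsing subgraph is bivalent at some vertex vanish via the reflection $x\mapsto 2y-x$, which reverses orientation but preserves the integrand. Faces at infinity vanish because $\alpha$ is compactly supported in the external legs and the propagator decays.

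The one remaining type is the anomalous face, where all vertices of a connected component collide. There the contribution is a constant times a local functional $\int_{\RR^3}\langle\ldots\rangle$ built from a $\g$-invariant form. By the $SO(3)$-equivariance of $\bar P$ (it is the pullback of the round volume form), these anomaly terms are the standard ones. By the deformation-complex computation of Section~\ref{subsec: def cplx for CS}, any such obstruction is a degree-one cocycle in $\C^{\ge1}(\g)[3]$, hence a class in $\H^4(\g)=0$. In fact on $\RR^3$ with the round propagator, Axelrod--Singer/Kontsevich show these integrals vanish identically: the relevant form on the face is pulled back from a configuration space modulo translation and dilation, and with the odd-reflection symmetry it integrates to zero.

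I expect the anomalous and hidden faces to be the main obstacle, since the principal-face bookkeeping is mechanical. My first attempt would be the Kontsevich symmetry and dimension-counting lemmas. If those fail, I would fall back on the cohomological argument: an anomaly that is a cocycle in $\H^1(\Def_{CS})\cong\H^4(\g)=0$ can be cancelled by a local counterterm.
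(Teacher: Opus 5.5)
Your proposal is essentially the paper's argument. Both use Stokes' theorem on $\bConf_{V(\gamma)}(\RR^3)$, cancel the edge-contraction terms against $\frac12\{I^{\rm ren}_{CS},I^{\rm ren}_{CS}\}$ (separating edges) and $\hbar\Delta I^{\rm ren}_{CS}$ (non-separating edges), and kill all remaining boundary faces with Kontsevich's symmetry lemmas; the only difference is bookkeeping, since you take $\d\bar P=0$ and read the edge-contraction terms off the principal faces, whereas the paper records them as interior terms via $\d_{dR}\bar P=\bar K$. One caution is that your $\H^4(\g)=0$ fallback would only make the obstruction exact, which justifies a corrected action rather than $I^{\rm ren}_{CS}$ itself, but you do not need it: in a connected graph with external legs every vertex carrying a leg is at most bivalent in the fully collapsed subgraph, so the anomalous faces beyond the two-vertex principal one are already covered by the univalent/bivalent-vertex lemmas you cite for hidden faces.
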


The argument intertwines the structure of the configuration spaces $\bConf_n(\RR^3)$ with the quantum master equation in an elegant way.
As we will mimic this argument for the defect theory,
we sketch it here.

The first step is to consider the equivalent assertion that
\[
\d_{dR} I^{\rm ren}_{CS} + \frac{1}{2} \{I^{\rm ren}_{CS},I^{\rm ren}_{CS}\} + \hbar \Delta I^{\rm ren}_{CS} 
\]
is a constant,
which is the form we work with.

The main idea is then to unpack the first term $\d_{dR} I^{\rm ren}_{CS}$ and see why it cancels off the bracket and BV Laplacian terms.
One must keep track of the Lie-theoretic weights, of course, 
but the key step is analytic---essentially just Stokes' theorem and graph combinatorics---so we focus on it here.

For any graph $\gamma$ with $k$ legs and for any  $\alpha \in \Omega^*_c((\RR^3)^k)$, 
we have by definition that
\[
w^{an}_\gamma(\d_{dR} \alpha) 
= \int_{\bConf_{V(\gamma)}(\RR^3)} (\d_{dR}\Phi^*\alpha) \wedge \bigwedge_{e \in E(\gamma)} \pi_e^* \bar{P}
\]
Using that $\d_{dR}$ is a derivation,
we rewrite this integral as a sum of two terms
\[
\int_{\bConf_{V(\gamma)}(\RR^3)} \d_{dR}\left(\Phi^*\alpha \wedge \bigwedge_{e \in E(\gamma)} \pi_e^* \bar{P}\right) 
- (\pm)\int_{\bConf_{V(\gamma)}(\RR^3)} \Phi^*\alpha \wedge \left(\d_{dR} \bigwedge_{e \in E(\gamma)} \pi_e^* \bar{P}\right)
\]
and then we rewrite this sum as
\begin{equation}
\label{eqn: CS QME expansion}
\int_{\partial\bConf_{V(\gamma)}(\RR^3)} \Phi^*\alpha \wedge \bigwedge_{e \in E(\gamma)} \pi_e^* \bar{P}
- \sum_{e \in E(\gamma)} (\pm) \int_{\bConf_{V(\gamma)}(\RR^3)} \Phi^*\alpha \wedge \pi_e^*\bar{K} \wedge \bigwedge_{e' \neq e} \pi_{e'}^* \bar{P},
\end{equation}
where the first term follows from Stokes' theorem and the second term uses that $\d_{dR} \bar{P} = \bar{K}$.

Now the second term, namely the sum over internal edges, can be broken up into separating edges (i.e., an edge whose removal breaks the graph into disconnected pieces) or nonseparating edges.
For a separating edge $e$, one can see that there is a term in $\{I^{\rm ren}_{CS}, I^{\rm ren}_{CS}\}$
that cancels the corresponding summand, 
since the bracket is given by connecting two interaction terms with $\bar{K}$.
For a nonseparating edge $e$, one can see that there is a term in $\Delta I^{\rm ren}_{CS}$
that cancels the corresponding summand,
since the BV Laplacian connects two legs of a single interaction term with~$\bar{K}$.

Hence it remains to show that the first term, namely the integral over $\partial \bConf(V(\gamma))$, vanishes.
But Kontsevich showed this, using symmetry arguments, in \cite{KonECM};
see Lemma 2.2 and the subsequent discussion.

\subsubsection{}

We have presented a BV quantization of Chern--Simons theory, 
but in the introduction, we asserted that there were many possible such quantizations,
parametrized by a choice of $\hbar$-dependent level.
Hence we need to see where they would appear in the process we just described.

Thankfully this is straightforward:
our classical action functional involves a choice of invariant symmetric bilinear form $\langle-,-\rangle_\g$ on $\g$,
and that choice determines the Lie-theoretic parts of the propagator and the 3-valent vertex.
We could instead pick any sequence of such pairings $(\langle-,-\rangle_{(n)})_{n > 0}$ and use
\[
\langle-,-\rangle_\g + \hbar \langle-,-\rangle_{(1)} + \cdots + \hbar^n \langle-,-\rangle_{(n)} + \cdots
\]
in place of $\langle-,-\rangle_\g$.
All our arguments for the BV quantization work {\it verbatim}, 
and so we see that such a choice of {\em level} determines a BV quantization of the classical Chern--Simons theory.\footnote{We do not require any of the $\langle-,-\rangle_{(n)}$ to be nondegenerate, 
just the leading term $\langle-,-\rangle_\g$.}

What is not obvious from this construction is whether these BV quantizations are distinct up to isomorphism.
One could imagine there is some $\hbar$-dependent automorphism of the space of fields that identifies the quantizations associated to two different levels.
Here one can take advantage of a bit of formalism developed in~\cite{CosBook},
namely the deformation complex of a classical BV theory.
As discussed in Section~\ref{subsec: def cplx for CS}, it allow us to see that these quantizations are distinct and justifies Theorem~\ref{thm: levels for CS}.

\subsection{The diagrammatics of the fully quantum defect theory}
\label{subsec: defect diagrams}

We now turn to perturbatively quantizing the whole coupled system (see the action \eqref{coupled action}),
i.e., we examine quantum Chern--Simons theory coupled to a quantum fermion.
Parallel to our development of pure Chern--Simons theory,
our first step is to show that no counterterms are needed.
That is, we show the Feynman diagrams admit no divergences.
We then examine the QME, which does not hold on the nose,
and recognize the ``framing anomaly.''

\subsubsection{Bicolored trivalent graphs}

The coupled theory has two interaction terms,
the Chern--Simons interaction and the coupling between the gauge field and the fermions.
We view the coupling as having in inward and outward directed leg for the $\psi$ and $\psib$, respectively.
Each field also contributes a propagator:
$P^A$ for the gauge fields, which we might draw with a wiggly line, 
and $P^f$ for the fermion fields, which we might draw as a directed edge.
Note that due to the directed nature of the fermionic propagators and the coupling interaction,
they always lie on a directed path or cycle.
Thus any graph can be constructed in two stages:
first specify the purely fermionic piece, which will be a disjoint union of oriented lines and wheels,
and then attach the fermionic piece to a Chern--Simons graph.

Note that the weight of a graph still factors into an analytic and a Lie-theoretic contribution.
Now the Lie-theoretic weight $w^{\g,\rho}$ depends on the Lie algebra $\g$ 
and the choice of finite-dimensional representation $\rho: \g \to \End(V)$.

\subsubsection{The configuration spaces}

To show that no regularization is needed for the analytical aspects,
we again use the configuration space method, 
modified to take account of the embedded 1-manifold.
Again, we follow closely the exposition of~\cite{KMV}.

Let $\iota: L \hookrightarrow \RR^3$ denote a proper embedding of a 1- (possibly with boundary). 
Let $(\ell; n)$ denote an element of $\NN \times \NN$ that we view as saying we want to talk about ``$\ell$ points on $L$ and $n$ points in $\RR^3$.''

\begin{dfn}
Let $\bConf_{\ell;n}(\iota: L \to \RR^3)$ denote the partial compactification of the space of configurations of $\ell$ points on $L$ and $n$ points in $\RR^3$ 
given by the pullback of the diagram 
\[
\begin{tikzcd}
& \bConf_{\ell+n}(\RR^3) \arrow[d, "{\rm pr}^\ell"]\\
\bConf_\ell(L) \arrow[r,"\iota^\ell"] & \bConf_{\ell}(\RR^3) 
\end{tikzcd}
\]
where ${\rm pr}^\ell$ denotes the projection onto the first $\ell$ points and where $\iota^\ell$ sends $(y_1,\ldots,y_\ell) \in \Conf_\ell(L)$ to $(\iota(y_1),\ldots,\iota(y_\ell)) \in \Conf_\ell(\RR^3)$,
and extends naturally to the compactifications.
\end{dfn}

The boundary of $\bConf_{\ell;n}(\iota: L \to \RR^3)$ consists of points that have collided but know how to separate infinitesimally to produce $\ell$ disjoint points along $L$ and $n$ disjoint points in the bulk.

This construction varies nicely over the space of proper embeddings $\Emb(L, \RR^3)$, 
and hence it naturally determines a smooth fiber bundle 
\[
\bpi_{\ell;n}: \bConf_{\ell;n}(L,\RR^3) \to \Emb(L,\RR^3)
\]
whose fiber over $\iota$ is $\bConf_{\ell;n}(\iota: L \to \RR^3)$.
This claim is an exceedingly minor generalization of Proposition 4.6 of \cite{KMV}.\footnote{\cite{KMV} use the notation $C[\vec{i}+s; \cL^n_m, \Gamma] \to \cL^n_m$ for this kind of space, 
where $\cL^n_m$ denotes a space of proper embeddings of $m$ disjoint copies of $\RR$ into $\RR^n$,
$\vec{i} = (i_1,\ldots,i_m)$ indicates we want $i_j$ points on the $j$th strand, and $s$ indicates we want $s$ points in $\RR^n$.}

\subsubsection{The configuration space integrals}
\label{subsec: defect integrals}

Let $\gamma$ denote a trivalent graph for the coupled theory.
We need a lot of notation:
\begin{itemize}
\item Let $E^f(\gamma)$ denote the set of fermionic edges, which are labeled by $P^f$.
\item Let $E^A(\gamma)$ denote the set of gauge edges, which are labeled by $P^A$.
\item Let $V^A_t(\gamma)$ denote the set of trivalent vertices with three $A$ legs.
\item Let $V^f_t(\gamma)$ denote the set of trivalent vertices with one $A$ leg and hence also a $\psi$ and $\psib$ leg.
\item Let $V^A_u(\gamma)$ denote the set of external $A$ legs (or univalent $A$-vertices).
\item Let $V^f_u(\gamma)$ denote the set of external fermionic legs (or univalent $\psi$- or $\psib$-vertices).
\end{itemize}
Set 
\[
\ell = |V^f_t(\gamma)|+ |V^f_u(\gamma)| \quad\text{ and }\quad n = |V^A_t(\gamma)|+|V^A_u(\gamma)|.
\]
We use $\bConf_{\gamma}(\iota: L \to \RR^3)$ to denote $\bConf_{\ell;n}(\iota: L \to \RR^3)$.

For such a trivalent graph $\gamma$, 
we get maps
\[
\phi^A_\gamma: \bConf_{\gamma}(\iota: L \to \RR^3) \to (S^2)^{|E^A(\gamma)|}
\]
and
\[
\phi^f_\gamma: \bConf_{\gamma}(\iota: L \to \RR^3) \to (S^0)^{|E^f(\gamma)|}.
\]
There is then a pullback form 
\[
\Omega_\gamma = \phi^{f*}_\gamma \omega^f \wedge \phi^{A*}_\gamma \omega^A,
\]
where
\[
\omega^f = \prod_{|E^f(\gamma)|} P^f
\]
and 
\[
\omega^A = \bigwedge_{|E^A(\gamma)|} \d vol_{S^2}.
\]
(Recall that the propagator for the fermion depends on the 1-manifold: see Section~\ref{sec on quantizing charged FF}.)
There is also a natural map 
\[
\Phi_\gamma: \bConf_\gamma(\iota) \to (S^3)^{|V_u^A|} \times  L^{|V_u^f|}
\]
by remembering the points of the external legs ({\em aka} univalent vertices).
(Note that a point can run to $\infty$ where $S^3 = \RR^3\cup\infty$.)

Finally, for any $\alpha \in \Omega^*_c((\RR^3)^{|V_u^A|} \times  L^{|V_u^f|})$, 
we define
\[
w^{an}_\gamma(\alpha) 
= \int_{\bConf_{\gamma}(\iota)} \Phi_\gamma^*\alpha \wedge \Omega_\gamma.
\]
This expression determines a local functional of polynomial degree $m + n + \bar{n}$ on the total space of fields,
which we will also denote~$w^{an}_\gamma$.

\begin{dfn}
\label{dfn: ren defect action}
The {\em renormalized action functional of the defect theory} is
\[
I_{\rm coup}^{\rm ren} = \sum_{\gamma} \frac{\hbar^{b_1(\gamma)}}{|\Aut(\gamma)|} w_\gamma^{an} w_\gamma^{\g,\rho},
\]
and it encodes integrating out the non-zero modes of the gauge and fermion fields.
\end{dfn}

We remark that this expression is similar to the usual configuration space integrals,
except that we allow external legs and 
thus produce a functional on the space of compactly-supported fields.

\subsubsection{The quantum master equation}

As we will now explain,
this renormalized action does {\em not} satisfy the quantum master equation.
The obstruction is exact, however, 
as it must be, by our computations of the deformation complex.
One can thus correct the renormalized action to obtain a solution to the QME.

We will not provide a full characterization of the obstruction or how to trivialize it.
Instead we will describe the first nontrivial term in the obstruction and explain why it is precisely the usual self-linking issue first identified in~\cite{BotTau}.

\begin{rmk}
We expect---but do not verify here---that the literature on configuration space integrals
(notably \cite{BotTau,Thu, AltFreVCS, CatCotLon1, KMV})
allows one to exhibit a full quantization of the coupled system.
The main technical difference between our situation and theirs is that we allow a more general class of graphs (notably those with external edges).
One must revisit the usual cancellation arguments and verify that 
\begin{itemize}
\item only integrals over anomalous faces matter and
\item these can be cancelled by a simple pre-factor taking into account the self-linking of each component of the embedded 1-manifold.
\end{itemize}
It would be appealing to synthesize our structural results with the rich, computational insights about configuration space integrals developed by these authors.
\end{rmk}

Analogously to Section \ref{subsec: CS QME}, 
we find that the obstruction to satisfying the QME depends on integrals over the boundary faces of the compactified configuration spaces.

\begin{lmm}\label{lmm: QME fails}
The renormalized action functional need not satisfy the quantum master equation.
The obstruction is  
\[
\sum_{\gamma} \frac{\hbar^{b_1(\gamma)}}{|\Aut(\gamma)|} w_\gamma^{\g,\rho} \int_{\partial \bConf_{\gamma}(\iota: L \to \RR^3)} \Omega_\gamma
\]
and it arises by an integral over the boundary faces of the configuration spaces.
\end{lmm}

\begin{proof}
We wish to compute the obstruction
\[
\d_{dR} I^{\rm ren}_{\rm coup} + \frac{1}{2} \{I^{\rm ren}_{\rm coup},I^{\rm ren}_{\rm coup}\} + \hbar \Delta I^{\rm ren}_{\rm coup} .
\]
Our approach is parallel to the treatment of pure Chern--Simons theory.
As seen in the computation leading to Equation~\eqref{eqn: CS QME expansion},
Stokes' theorem lets us understand $\d_{dR}I_{\rm coup}^{\rm ren}$ as a sum of two terms,
one given by integrating over the boundary $\partial\bConf_{\gamma}(\iota: L \to \RR^3)$ 
and the other by replacing each propagator by the integral kernel for the BV bracket.

More explicitly, let $\gamma$ be a graph for this defect theory. 
Then for any 
\[
\alpha \in \Omega^*_c((\RR^3)^{|V_u^A|} \times  L^{|V_u^f|}),
\]
Stokes' theorem gives us
\[
w^{an}_\gamma(\d_{dR} \alpha)  = \int_{\partial\bConf_{\gamma}(\iota)} \Phi^*\alpha \wedge \Omega_\gamma
- \sum_{e \in E(\gamma)} (\pm) \int_{\bConf_{\gamma}(\iota)} \Phi^*\alpha \wedge \pi_e^*K \wedge \bigwedge_{e' \neq e} \pi_{e'}^* P,
\]
where to an edge $e \in E^A$, we use $K^A$ to replace $P^A$, 
and on an edge $e \in E^f$, we use $K^f$ to replace $P^f$.

The other summand in the obstruction
\begin{equation}\label{eqn: second part of QME}
\frac{1}{2}\{I^{\rm ren}_{\rm coup},I^{\rm ren}_{\rm coup}\} + \hbar \Delta I^{\rm ren}_{\rm coup}.
\end{equation}
kills the term given by the sum over edges.
Hence only the integral over the boundary survives to provide an obstruction.
\end{proof}

\subsubsection{Self-linking}

The obstruction term does not vanish, in fact, 
due to a diagram that encodes the self-linking integral for~$L$.
Consider the one-loop diagram with two external legs: an incoming and outgoing fermion.
Label one edge by the gauge propagator and the other edge by the fermion heat kernel.
This diagram appears when one applies BV Laplacian for the fermions to the 2-vertex tree arising by RG flow.\footnote{There is another diagram with these external legs: one labels an edge by the gauge heat kernel and the other edge by the fermion propagator. As the gauge heat kernel is a 3-form, however, this diagram vanishes for degree reasons.}

For simplicity and concreteness, consider a knot $L: S^1 \hookrightarrow \RR^3$.
At a finite length scale~$t$, our integral is over the configuration space of two points in the knot $L$,
and the integrand is given by pulling back the volume form of $S^2$ along the Gauss map and also multiplying by the heat kernel $K_{\rm ff}[t]$ on $S^1$.
The $t \to \infty$ limit of $K_{\rm ff}[t]$ is a constant function, 
since $K_{\rm ff}[\infty]$ is the integral kernel for projection onto the zero modes of the Laplacian.
Thus the integrand becomes the pullback of the volume form of $S^2$ along the Gauss map,
and hence it computes the self-linking.

It is a standard fact that a choice of knot framing allows one to ``fix'' this integral by adding the total torsion of the knot.
The sum is then the linking number of the knot and its displacement by the framing.

Hence {\em if we choose a framing $\tau$ with zero linking}, 
then we can add a linear term 
\[
I_\tau(A) = \int_L \cL_\tau(L^* A)
\]
to the action functional.
(This term just integrates the pullback of $A$ along $L$ after infinitesimally displacing by~$\tau$.)
The BV bracket with this minimal coupling term determines a diagram
where one vertex is labeled by the minimal coupling $\int_L ( \psib, \rho(i^*A) \psi )$ and the other vertex is labeled by~$I_\tau$.
The associated integral is then the total torsion, which cancels the self-linking.

\bibliographystyle{amsalpha}
\bibliography{2025refs}

\end{document}